\newtheorem{theorem}{Theorem}[section]
\newtheorem{remark}{Remark}[section]
\newtheorem{proposition}[theorem]{Proposition}
\newtheorem{define}{Definition}[section]
\newcommand{\joinR}{\hspace{-.1em}}
\newcommand{\RomanI}{I}
\newcommand{\RomanII}{\mbox{\RomanI\joinR\RomanI}}
\newcommand{\RomanIII}{\mbox{\RomanI\joinR\RomanII}}
\newcommand{\RomanIV}{\mbox{\RomanI\joinR\RomanV}}
\newcommand{\RomanV}{V}
\newcommand{\RomanVI}{\mbox{\RomanV\joinR\RomanI}}
\newcommand{\RomanVII}{\mbox{\RomanV\joinR\RomanII}}
\newcommand{\RomanVIII}{\mbox{\RomanV\joinR\RomanIII}}
\newcommand{\RomanIX}{\mbox{\RomanI\joinR\RomanX}}
\newcommand{\RomanX}{X}
\newcommand{\RomanXI}{\mbox{\RomanX\joinR\RomanI}}
\begin{document}
\title[Hall-magnetohydrodynamics system]{Another remark on the global regularity issue of the Hall-magnetohydrodynamics system}

\subjclass[2010]{35B65; 76W05}
 
\author[Rahman]{Mohammad Mahabubur Rahman}
\address{Department of Mathematics and Statistics, Texas Tech University, Lubbock, Texas, 79409-1042, U.S.A.}
\email{mohammad-mahabu.rahman@ttu.edu}
\author[Yamazaki]{Kazuo Yamazaki}
\address{Department of Mathematics and Statistics, Texas Tech University, Lubbock, Texas, 79409-1042, U.S.A.}
\email{kyamazak@ttu.edu} 
\date{}
\keywords{Electron magnetohydrodynamics; Hall-magnetohydrodynamics; regularity criteria.}

\begin{abstract} 
 We discover cancellations upon $H^{2}(\mathbb{R}^{n})$-estimate of the Hall term for $n \in \{2,3\}$. As its consequence, first, we derive a regularity criterion for the 3-dimensional Hall-magnetohydrodynamics system in terms of only horizontal components of velocity and magnetic fields. Second, we prove the global regularity of the $2\frac{1}{2}$-dimensional electron magnetohydrodynamics system with magnetic diffusion $(-\Delta)^{\frac{3}{2}} (b_{1}, b_{2}, 0) + (-\Delta)^{\alpha} (0, 0, b_{3})$ for $\alpha > \frac{1}{2}$. Lastly, we extend this result to the $2\frac{1}{2}$-dimensional Hall-magnetohydrodynamics system with $-\Delta u$ replaced by $(-\Delta)^{\alpha} (u_{1}, u_{2}, 0) -\Delta (0, 0, u_{3})$ for $\alpha > \frac{1}{2}$. The sum of the derivatives in diffusion that our global regularity result requires is $11+ \epsilon$ for any $\epsilon > 0$ while the analogous sum for the classical $2\frac{1}{2}$-dimensional Hall-magnetohydrodynamics system is 12 considering $-\Delta u$ and $-\Delta b$. 
\end{abstract}

\maketitle

\section{Introduction} 
\subsection{Motivation from physics and real-world applications}
Ever since the pioneering work of Alfv$\acute{\mathrm{e}}$n \cite{A42a} 80 years ago, the magnetohydrodynamics (MHD) system concerning electrically conducting fluids has attracted many interest from researchers in a wide array of applied sciences. For example, while the Navier-Stokes (NS) equations is often utilized to study fluid turbulence, the MHD system is the conventional choice to study MHD turbulence that occurs in laboratory settings such as fusion confinement devices (e.g., reversed field pinch), as well as astrophysical systems (e.g., solar corona). The Hall term arises upon writing the current density as the sum of the ohmic current and a Hall current that is perpendicular to the magnetic field (see \cite[Equation (94)]{L60}) and the Hall-MHD system, that consists of the MHD system with an addition of the Hall term, was formally introduced by Lighthill \cite{L60} in 1960. Thereafter, the Hall-MHD system has received much attention from physicists and engineers due to its applicability: the study of the sun \cite{C98}, star formation \cite{W04}, magnetic reconnection \cite{HG05}, and turbulence \cite{MH09}. The Hall-MHD system with zero velocity field informally reduces to the electron MHD system which governs the electron's self-induced magnetic field (see \cite{WH09}). 

Nevertheless, the singularity of the Hall term has disallowed mathematicians to prove some results which are well-known for the NS equations and can be extended in a standard way to the MHD system, two examples of such being the following. 
\begin{enumerate}
\item While the solution to the $2\frac{1}{2}$-dimensional ($2\frac{1}{2}$-D) MHD system starting from a sufficiently smooth initial data preserves its regularity for all time, an analogous problem is open for the Hall-MHD system (e.g., ``Contrary to the usual MHD the global well-posedness in the $2\frac{1}{2}$-dimensional Hall-MHD is wide open'' from \cite[Abstract]{CL14}).
\item While the solution to the MHD system with zero viscous diffusion and zero magnetic diffusion in any dimension has a unique solution locally in time, an analogous problem is open for the Hall-MHD system (e.g., \cite{CWW15a}). 
\end{enumerate} 
The purpose of this manuscript is to present new cancellations within the Hall term upon $H^{2}(\mathbb{R}^{n})$-estimate for both $n \in \{2,3\}$ (see Proposition \ref{Proposition 3.1}). The following is a summary of our findings due to such cancellations, with details of notations to be given subsequently.
\begin{enumerate}[(a)]
\item We obtain a regularity criterion for the 3-dimensional (3-D) Hall-MHD system that relies only on the horizontal components of its solution (see Theorem \ref{Theorem 2.1}). 
\item We prove that global regularity holds for $2\frac{1}{2}$-D electron MHD system as long as the horizontal components of the magnetic vector field have sufficiently strong diffusion of $(-\Delta)^{\frac{3}{2}}$, even if the diffusion on the vertical component of the magnetic vector field is as weak as $(-\Delta)^{\alpha}$ for $\alpha > \frac{1}{2}$ (see Theorem \ref{Theorem 2.2}). We point out that 
\begin{enumerate}[i]
\item the electron MHD system has scaling-invariance property (see \eqref{est 148}), 
\item considering its best-conserved quantity clearly indicates the appropriate exponent $\beta$ of $(-\Delta)^{\beta}$ in its diffusion that makes the equation critical (see \eqref{est 149}), 
\item  and yet we are able to prove global regularity for the equation when one of the components has a significantly weaker diffusion than the critical level  $\beta$ (see Theorem \ref{Theorem 2.2}). 
\end{enumerate}
\item We extend the aforementioned global regularity result of the electron MHD system to the $2\frac{1}{2}$-D Hall-MHD system with $-\Delta u$ and $-\Delta b$ respectively replaced by 
\begin{equation*}
(-\Delta)^{\alpha} (u_{1}, u_{2}, 0) - \Delta(0, 0, u_{3}) \hspace{1mm} \text{ and } \hspace{1mm} (-\Delta)^{\frac{3}{2}} (b_{1}, b_{2}, 0) + (-\Delta)^{\alpha} (0, 0, b_{3}) \hspace{1mm} \text{ for } \alpha > \frac{1}{2} 
\end{equation*}
(see Theorem \ref{Theorem 2.3}). The sum of such derivatives we require is $11 + \epsilon$ for any $\epsilon > 0$ while the analogous sum for the $2\frac{1}{2}$-D Hall-MHD system with $-\Delta u$ and $-\Delta b$ is 12 (see Remark \ref{Remark 2.2} (3)).
\end{enumerate} 

\subsection{Previous works} 
We will work with a spatial domain of $\mathbb{R}^{n}, n \in \{2,3\}$, although much of our discussions can be transferred to $\mathbb{T}^{n}$ via straight-forward modifications. We write $\partial_{t} \triangleq \frac{\partial}{\partial t}, \partial_{j} \triangleq \frac{\partial}{\partial x_{j}}$ for $j \in \{1, \hdots, n\}$, and $A \overset{(\cdot)}{\lesssim} B$ to imply the existence of a constant $C \geq 0$ of no dependence on any important parameter such that $A \leq CB$ due to the equation $(\cdot)$. Let us define $\Lambda^{\alpha} \triangleq (-\Delta)^{\frac{\alpha}{2}}$ for any $\alpha \in \mathbb{R}$ as a Fourier operator with a Fourier symbol of $\lvert \xi \rvert^{\alpha}$ so that $\mathcal{F}(\Lambda^{\alpha} f)(\xi) = \lvert \xi \rvert^{\alpha} \mathcal{F}(f)(\xi)$ where  $\mathcal{F}$ is the Fourier transform. We let $b: \mathbb{R}_{\geq 0} \times \mathbb{R}^{3} \mapsto \mathbb{R}^{3}$ represent the magnetic field,  
\begin{equation}\label{est 4} 
j \triangleq (j_{1}, j_{2}, j_{3}) \triangleq \nabla \times b = (\partial_{2}b_{3} - \partial_{3} b_{2}, -\partial_{1} b_{3} + \partial_{3} b_{1}, \partial_{1}b_{2} - \partial_{2} b_{1})
\end{equation} 
the current density field, $\epsilon \geq 0$ the Hall parameter, and $\eta \geq 0$ the magnetic diffusivity. The electron MHD system consists of 
\begin{equation}\label{est 48}  
\partial_{t} b + \epsilon \nabla\times (j\times b) = \eta \Delta b \hspace{4mm}\text{ for }  t > 0, 
\end{equation} 
starting from the initial data $b^{\text{in}} \triangleq b \rvert_{t=0}$ that is divergence-free so that the divergence-free property is propagated (see \cite[Equation (1)]{WH09}). 

Additionally, with $u: \mathbb{R}_{\geq 0} \times \mathbb{R}^{3} \mapsto \mathbb{R}^{3}$ and $\pi: \mathbb{R}_{\geq 0} \times \mathbb{R}^{3} \mapsto \mathbb{R}$ representing respectively the velocity field and pressure field, as well as $\nu \geq 0$ the viscosity, the 3-D Hall-MHD system reads  
\begin{subequations}\label{est 1}
\begin{align}
& \partial_{t} u + (u\cdot\nabla) u + \nabla \pi = \nu \Delta u + (b\cdot\nabla) b  \hspace{17mm} \text{ for } t > 0,\label{est 1a}\\
& \partial_{t} b + (u\cdot\nabla) b + \epsilon \nabla \times ( j \times b)  = \eta \Delta b + (b\cdot\nabla) u \hspace{4mm} \text{ for }  t > 0, \label{est 1b}\\
& \nabla\cdot u = 0  \hspace{56mm} \text{ for }  t > 0, \label{est 1c} 
\end{align}
\end{subequations} 
starting from initial data $(u^{\text{in}}, b^{\text{in}}) \triangleq (u,b) \rvert_{t=0}$ that are both divergence-free so that $\nabla\cdot b=  0$ is again  propagated through \eqref{est 1b} (e.g., \cite{CL14}). We refer to the Hall-MHD system with $\epsilon  = 0$ as the MHD system and in turn the MHD system with $b \equiv 0$ the NS equations if $\nu > 0$ and the Euler equations if $\nu = 0$. Let us clarify that the $2\frac{1}{2}$-D case of \eqref{est 1} refers to 
\begin{equation*}
u(t,x) = (u_{1}, u_{2}, u_{3})(t, x_{1}, x_{2}) \text{ and } b(t,x) = (b_{1}, b_{2}, b_{3})(t, x_{1}, x_{2})
\end{equation*} 
(e.g., \cite[Section 2.3.1]{MB02} for the $2\frac{1}{2}$-D NS and Euler equations). Physicists such as \cite{DSDCSCM12} relied on such $2\frac{1}{2}$-D Hall-MHD system because in the 2-D case when $b(t,x) = (b_{1}, b_{2})(t,x_{1}, x_{2})$, the Hall term decouples from the rest.

Concerning the mathematical analysis of the Hall-MHD system, Acheritogaray, Degond, Frouvelle, and Liu in \cite{ADFL11} proved the global existence of a weak solution to the 3-D Hall-MHD system \eqref{est 1} in $\mathbb{T}^{3}$ making use of the key identity
\begin{equation}\label{est 3} 
(\Theta \times \Psi) \cdot \Theta = 0 \hspace{5mm} \forall \hspace{1mm} \Theta, \Psi \in \mathbb{R}^{3}
\end{equation} 
to handle the Hall term so that the Hall term makes zero contribution to the energy identity. More fundamental well-posedness results were obtained in \cite{CDL14}. In particular, following the classical regularity criteria of the NS equations (e.g., \cite{S62, P59, ESS03}) and the MHD system (e.g., \cite{HX05, Z05}), Chae and Lee \cite{CL14} obtained various blow-up criteria for the Hall-MHD system with one of them being that for $m \in \mathbb{N}$ such that $m > 1 + \frac{n}{2}$, 
\begin{equation}\label{est 57} 
\limsup_{t\nearrow T^{\ast}} (\lVert u(t) \rVert_{H^{m}}^{2} + \lVert b(t) \rVert_{H^{m}}^{2}) = \infty \text{ if and only if } \int_{0}^{T^{\ast}} (\lVert u \rVert_{BMO}^{2} + \lVert \nabla b \rVert_{BMO}^{2}) dt = \infty 
\end{equation} 
where $T^{\ast} < \infty$ denotes the first blow-up time in the 3-D case; in the $2\frac{1}{2}$-D case, this is relaxed to 
\begin{equation}\label{est 64} 
\limsup_{t\nearrow T^{\ast}} (\lVert u(t) \rVert_{H^{m}}^{2} + \lVert b(t) \rVert_{H^{m}}^{2}) = \infty \text{ if and only if } \int_{0}^{T^{\ast}}   \lVert j \rVert_{BMO}^{2} dt = \infty. 
\end{equation} 
As these criteria indicate, or directly by comparing $(b\cdot\nabla) b$ in \eqref{est 1a} and the Hall term $\nabla \times (j\times b)$ in \eqref{est 1b} that can be written as 
\begin{equation}\label{est 150} 
\nabla \times (j\times b) = \nabla \times \left[ -\nabla (\frac{ \lvert  b \rvert^{2}}{2} ) + (b\cdot\nabla) b \right] = \nabla \times ((b\cdot\nabla) b), 
\end{equation}
the Hall term is informally one derivative more singular than the non-linear terms in the MHD system (see Remark \ref{Remark 1.1}). We also refer to \cite{CS13} for temporal decay,  \cite{CW15, CW16b} for partial regularity results, and \cite{CW16} for singularity formation of the 3-D Hall-MHD system with zero magnetic diffusion. 

In relevance to the regularity criteria of the Hall-MHD system such as \eqref{est 57}-\eqref{est 64}, we recall that the research direction on various component reduction for the NS equations flourished in the past few decades. E.g, Chae and Choe \cite{CC99} in 1999 reduced the well-known Beale-Kato-Majda criterion \cite{BKM84} to two components of the vorticity $\omega \triangleq \nabla \times u$ for the 3-D NS equations. Cao and Titi \cite{CT08}, Kukavica and Ziane \cite{KZ06, KZ07}, Zhou and Pokorn$\acute{\mathrm{y}}$ \cite{ZP10}, and many other works were devoted to reducing the regularity criteria from \cite{S62, P59} to a few components of the velocity $u$ or a few entries of $\nabla u$, all in norms which are not scaling-invariant except \cite{KZ07}. Because the non-linear terms for the MHD system, namely $(u\cdot\nabla)u, (b\cdot\nabla) b, (u\cdot\nabla) b$, and $(b\cdot\nabla) u$, have a similar structure to $(u\cdot\nabla) u$ of the NS equations, many component reduction results were extended from the NS equations to the MHD system (e.g., \cite{CW10, Y14a}). Using anisotropic Littlewood-Paley theory, component reduction results at the scaling-invariant level were obtained by Chemin and Zhang \cite{CZ16} for the NS equations (see \cite{Y16c} in the case of the MHD system). All these component reduction results, one way or another, relied on some cancellations using divergence-free property; e.g., one can find 
\begin{equation}\label{est 98}
\int_{\mathbb{R}^{3}} (u\cdot\nabla) u \cdot \Delta u dx \lesssim \int_{\mathbb{R}^{3}} \lvert \nabla_{h} u \rvert \lvert \nabla u \rvert^{2} dx
\end{equation} 
in \cite[p. 1102]{ZP10} or 
\begin{equation}\label{est 99} 
\int_{\mathbb{R}^{3}} (u\cdot\nabla) u \cdot \sum_{k=1}^{2} \partial_{k}^{2} u dx \lesssim \int_{\mathbb{R}^{3}} \lvert u_{3} \rvert \lvert \nabla  u \rvert \lvert \nabla \nabla_{h} u \rvert dx \hspace{1mm} \text{ where } \hspace{1mm} \nabla_{h} \triangleq (\partial_{1},\partial_{2}, 0)
\end{equation} 
as a consequence \cite[Lemma 2.3]{KZ06} (cf. also horizontal Biot-Savart law identity utilized in \cite{CZ16, Y18a}). As we pointed out already, such component reduction results for the NS equations were successfully extended to the MHD system because their non-linear terms had identical structures; with this in mind, due to the completely distinct singular structure of the Hall term $\nabla \times (j\times b)$, to the best of our knowledge, no significant attempt was made to discover any non-trivial cancellations in the Hall term and study its global well-posedness from anisotropic perspective until very recently; we review these new developments next. 

\subsection{Motivation from previous works}\label{Subsection 1.3}  
Let us denote 
\begin{align*}
f_{h} \triangleq 
\begin{pmatrix}
f_{1} & f_{2} & 0 
\end{pmatrix}^{T} \text{ and } f_{v} \triangleq 
\begin{pmatrix}
0 & 0 & f_{3} 
\end{pmatrix}^{T}
\text{ for any } f= 
\begin{pmatrix}
f_{1} & f_{2} & f_{3}  
\end{pmatrix}^{T}. 
\end{align*}  
In \cite{RY22} we discovered multiple cancellations in the $H^{1}(\mathbb{R}^{2})$-estimate on the $2\frac{1}{2}$-D Hall-MHD system that ultimately resulted in the following inequality: any smooth function $b$ that is divergence-free satisfies  
\begin{equation}\label{est 38} 
\int_{\mathbb{R}^{2}} \nabla\times (j \times b) \cdot \Delta b dx  \lesssim \int_{\mathbb{R}^{2}} \lvert \nabla b \rvert  \lvert \nabla b_{h} \rvert \lvert \nabla^{2} b_{h} \rvert dx  
\end{equation} 
(see \cite[Equation (76)]{RY22}). Consequently, we obtained various component reduction results of regularity criteria for the $2\frac{1}{2}$-D Hall-MHD system, e.g., in terms of $b_{3}$, $j_{3}$, $u_{3}$, as well as $u_{1}$ and $u_{2}$ in \cite[Theorems 2.1, 2.2, 2.3 (1), and 2.3 (2)]{RY22}, respectively. We mention that in the $2\frac{1}{2}$-D case, completely independently some cancellations and component reduction of criteria were also discovered very recently in \cite{BK23, DW22}. 
 
Remarkably, \eqref{est 38} can be extended to the 3D case; in \cite[Equations (95)-(100)]{RY22}, \eqref{est 38} was extended to  
\begin{equation}\label{est 53} 
\int_{\mathbb{R}^{3}} \nabla \times (j\times b) \cdot \Delta b dx \lesssim \int_{\mathbb{R}^{3}} \lvert \nabla b \rvert \lvert \nabla b_{h} \rvert \lvert \nabla^{2} b_{h} \rvert dx. 
\end{equation} 
However, we were unable to obtain any component reduction result of regularity criteria for the 3-D Hall-MHD system using \eqref{est 53} because $H^{1}(\mathbb{R}^{3})$-bound cannot lead to higher regularity in the 3-D case; indeed, due to the embedding 
\begin{equation}\label{est 130}
L_{\text{loc}}^{1} (\mathbb{R}^{n}) \cap \dot{H}^{\frac{n}{2}}(\mathbb{R}^{n}) \hookrightarrow BMO(\mathbb{R}^{n}) \hspace{1mm} \forall \hspace{1mm} n \in\mathbb{N}
\end{equation}
(e.g., \cite[Theorem 1.48]{BCD11}), we see that it suffices to prove $\int_{0}^{T} ( \lVert u \rVert_{\dot{H}^{\frac{3}{2}}}^{2} + \lVert b \rVert_{\dot{H}^{\frac{5}{2}}}^{2}) dt < \infty$ to prevent finite-time blow-up according to \eqref{est 57} while $H^{1}(\mathbb{R}^{3})$-bound only gives $\int_{0}^{T} \lVert \Delta u \rVert_{L^{2}}^{2} + \lVert \Delta b \rVert_{L^{2}}^{2} dt < \infty$ from diffusive terms. Therefore, we desperately need the $H^{2}(\mathbb{R}^{3})$-bound to bootstrap to higher regularity in the 3-D case. In \cite{RY22} we did not think such cancellations can be found in the $H^{2}(\mathbb{R}^{3})$-estimate due to its complexity. We are also not aware of any work that found cancellations in the $H^{2}(\mathbb{R}^{3})$-estimate even for the 3-D NS equations which is much simpler than the Hall-MHD system; e.g., \eqref{est 98}-\eqref{est 99} are for estimates on $\lVert \nabla u \rVert_{L^{2}}^{2}$ and $\lVert \nabla_{h} u \rVert_{L^{2}}^{2}$, respectively. Remarkably, \eqref{est 58}-\eqref{est 100} will show that \eqref{est 38}-\eqref{est 53} can be extended to the $H^{2}(\mathbb{R}^{n})$-estimate for both $n \in \{2,3\}$.  

Second, by relying on the inequality \eqref{est 38}, in \cite[Theorem 2.4]{RY22} we proved the global well-posedness of the 2$\frac{1}{2}$-D Hall-MHD system with magnetic diffusion $-\Delta b$ replaced by $(-\Delta)^{\frac{3}{2}} b_{h} -\Delta b_{v}$ in $H^{3} (\mathbb{R}^{2})$: 
\begin{subequations}\label{est 39}
\begin{align}
& \partial_{t} u + (u\cdot\nabla) u + \nabla \pi = \nu \Delta u + (b\cdot\nabla) b  \hspace{33mm} \text{ for } t > 0,\label{est 39a}\\
& \partial_{t} b + (u\cdot\nabla) b + \epsilon \nabla \times ( j \times b)  + (-\Delta)^{\frac{3}{2}} b_{h} - \Delta b_{v} = (b\cdot\nabla) u \hspace{4mm} \text{ for }  t > 0, \label{est 39b}\\
& \nabla\cdot u = 0  \hspace{71mm} \text{ for }  t > 0, \label{est 39c} 
\end{align}
\end{subequations} 
where we point out that the property of $\nabla\cdot b= 0$ is propagated from $\nabla\cdot b^{\text{in}} = 0$ due to the nature of the $2\frac{1}{2}$-D flow. This improved the previous result \cite[Theorem 2.3]{Y19a} in which the global regularity required $(-\Delta)^{\frac{3}{2}} b$ which was in accordance with the general belief that the Hall term is one more derivative more singular than the MHD system (recall \eqref{est 150}) and thus the global regularity requires the magnetic diffusion $(-\Delta)^{\frac{3}{2}}b$ rather than $-\Delta b$. 
 
\begin{remark}\label{Remark 1.1} 
In relevance, let us point out that the generalized electron MHD system
\begin{equation}\label{est 147} 
\partial_{t} b + \epsilon \nabla \times (j\times b) + (-\Delta)^{\beta} b = 0, 
\end{equation} 
possesses a scaling-invariance property although the Hall-MHD system does not; i.e., if $b(t,x)$ solves \eqref{est 147}, then so does 
\begin{equation}\label{est 148} 
b_{\lambda}(t,x) \triangleq \lambda^{2\beta -2} b(\lambda^{2\beta} t, \lambda x)  \hspace{1mm} \forall\hspace{1mm} \lambda \in \mathbb{R}_{+}. 
\end{equation} 
Considering the fact that the best conserved quantity for the solution to \eqref{est 147} is $\lVert \cdot \rVert_{L_{T}^{\infty} L_{x}^{2} \cap L_{T}^{2} \dot{H}_{x}^{\beta}}$ and e.g., 
\begin{equation}\label{est 149}
\lVert b_{\lambda} \rVert_{L_{T}^{\infty} L_{x}^{2}}^{2}  =  \lambda^{4\beta - 4 - n} \lVert b \rVert_{L_{\lambda^{2\beta} T}^{\infty} L_{x}^{2}}^{2}, 
\end{equation}
we clearly see that the critical threshold is $\beta = 1 + \frac{n}{4}$, in particular, $\beta \geq \frac{3}{2}$ in case $n = 2$.
\end{remark} 

\begin{remark}
At this point, we point out another surprising feature of the inequalities \eqref{est 38}-\eqref{est 53}. We observe that both bounds in \eqref{est 38}-\eqref{est 53} separated $b_{h}$ twice rather than just once, specifically $\lvert \nabla b_{h} \rvert \lvert \nabla^{2} b_{h} \rvert$. In contrast, \eqref{est 98} did not separate $u_{3}$ or $u_{h}$;  \eqref{est 99} separated $u_{3}$ but only once and that bound on $\int_{\mathbb{R}^{3}} (u\cdot\nabla) u \cdot \sum_{k=1}^{2} \partial_{k}^{2} u dx$ is not for an $\lVert \nabla u \rVert_{L^{2}}^{2}$-estimate but only for $\lVert \nabla_{h} u \rVert_{L^{2}}^{2}$-estimate.   

Typically when one wishes to obtain a component reduction result in terms of ``$X$,'' it suffices to separate it only once; e.g., in order to deduce a criterion in terms of $\nabla b_{h}$ for the $2\frac{1}{2}$-D Hall-MHD system, one can immediately estimate via H$\ddot{o}$lder's, Gagliardo-Nirenberg, and Young's inequalities 
\begin{align*}
\int_{\mathbb{R}^{2}} \nabla\times (j \times b) \cdot \Delta b dx  \overset{\eqref{est 38}}{\lesssim}& \int_{\mathbb{R}^{2}} \lvert \nabla b \rvert  \lvert \nabla b_{h} \rvert \lvert \nabla^{2} b_{h} \rvert dx \lesssim \lVert \nabla b \rVert_{L^{\frac{2p}{p-2}}} \lVert \nabla b_{h} \rVert_{L^{p}} \lVert \nabla^{2} b_{h} \rVert_{L^{2}}\\
\lesssim& \lVert \nabla b \rVert_{L^{2}}^{\frac{p-2}{p}} \lVert \Delta b \rVert_{L^{2}}^{\frac{2}{p} + 1} \lVert \nabla b_{h} \rVert_{L^{p}}  \leq \frac{\eta}{2} \lVert \Delta b \rVert_{L^{2}}^{2}+ C \lVert \nabla b \rVert_{L^{2}}^{2} \lVert \nabla b_{h} \rVert_{L^{p}}^{\frac{2p}{p-2}} 
\end{align*}
for any $p\in (2,\infty]$ in which we simply bounded $\lVert \nabla^{2} b_{h} \rVert_{L^{2}} \lesssim \lVert \Delta b \rVert_{L^{2}}$. However, in pursuit of the global well-posedness of the system \eqref{est 39}, we realize that taking full advantage of \eqref{est 38} shows that one does not need $(-\Delta)^{\frac{3}{2}} b_{h} - \Delta b_{v}$ in  \eqref{est 39} but only  
\begin{equation}\label{est 129} 
(-\Delta)^{\frac{3}{2}} b_{h}
\end{equation} 
to close its $H^{1}(\mathbb{R}^{2})$-estimate; i.e., vertical diffusion is not necessary at all because we can estimate from \eqref{est 38} via H$\ddot{o}$lder's inequality, the Sobolev embedding of $\dot{H}^{1}(\mathbb{R}^{2}) \hookrightarrow L^{4}(\mathbb{R}^{2})$ and Young's inequality, 
\begin{align}
\int_{\mathbb{R}^{2}} \nabla\times (j \times b) \cdot \Delta b dx & \overset{\eqref{est 38}}{\lesssim} \int_{\mathbb{R}^{2}} \lvert \nabla b_{h} \rvert \lvert \nabla^{2} b_{h} \rvert \lvert \nabla b \rvert dx  \lesssim \lVert \nabla b_{h} \rVert_{L^{4}} \lVert \nabla^{2} b_{h} \rVert_{L^{4}} \lVert \nabla b \rVert_{L^{2}}\nonumber\\
\lesssim& \lVert \Lambda^{\frac{3}{2}} b_{h} \rVert_{L^{2}} \lVert \Lambda^{\frac{5}{2}} b_{h} \rVert_{L^{2}} \lVert \nabla b\rVert_{L^{2}} \leq \frac{\eta}{2} \lVert \Lambda^{\frac{5}{2}} b_{h} \rVert_{L^{2}}^{2} + C \lVert \Lambda^{\frac{3}{2}} b_{h} \rVert_{L^{2}}^{2} \lVert \nabla b \rVert_{L^{2}}^{2} \label{est 139}
\end{align} 
and close this estimate using $\int_{0}^{T} \lVert \Lambda^{\frac{3}{2}} b_{h} \rVert_{L^{2}}^{2} dt \lesssim 1$ from energy inequality. Nevertheless, in \cite{RY22} we did not pursue the global well-posedness of the $2\frac{1}{2}$-D Hall-MHD system with zero diffusion in the vertical component of the magnetic field in \eqref{est 129} due to the following two reasons. 
\begin{enumerate}
\item First, to the best of our knowledge, even local well-posedness of the Hall-MHD system in any dimension  requires magnetic diffusion of the form 
\begin{equation}\label{est 62} 
(-\Delta)^{\alpha} b \text{ with } \alpha > \frac{1}{2} 
\end{equation}
according to \cite{CWW15a}.
\item Second, as we described in \eqref{est 130}, $H^{1}(\mathbb{R}^{2})$-bound of the solution to the Hall-MHD system bootstraps to higher regularity in the $2\frac{1}{2}$-D case because an $H^{1}(\mathbb{R}^{2})$-bound implies from its diffusive terms
\begin{align}
\int_{0}^{T} \lVert j \rVert_{BMO}^{2} dt \lesssim \int_{0}^{T} \lVert \Delta b \rVert_{L^{2}}^{2} dt \lesssim 1;  \label{est 132}
\end{align} 
however, an $H^{1}(\mathbb{R}^{2})$-bound for $b$ with magnetic diffusion of the form \eqref{est 129} only gives $\int_{0}^{T} \lVert \Lambda^{\frac{5}{2}} b_{h} \rVert_{L^{2}}^{2} dt \lesssim 1$ and that does not bound $\int_{0}^{T} \lVert j \rVert_{BMO}^{2} dt $ in general. 
\end{enumerate} 
\end{remark}  
 
Considering the restriction \eqref{est 62}, as a second result in this manuscript, we aim to prove the global well-posedness of the following $2\frac{1}{2}$-D generalized electron MHD system: 
\begin{equation}\label{est 63} 
\partial_{t} b + \epsilon \nabla \times (j\times b) + (-\Delta)^{\frac{3}{2}}  b_{h} + (-\Delta)^{\alpha} b_{v} = 0
\end{equation} 
starting from divergence-free initial data $b^{\text{in}}$ where we note that the divergence-free property is propagated. Local well-posedness of \eqref{est 63} in $H^{m}(\mathbb{R}^{n})$ for $m \in\mathbb{N}$ such that $m > 2$ can be shown following \cite{CWW15a}; for completeness, we leave a sketch in the Appendix. Given $(-\Delta)^{\frac{3}{2}} b_{h}$ in \eqref{est 63}, we know from \eqref{est 139} that we can obtain an $H^{1}(\mathbb{R}^{2})$-bound for the solution to \eqref{est 63}. However, as we discussed in \eqref{est 132}, $H^{1}(\mathbb{R}^{2})$-bound suffices to bootstrap to higher regularity only if we have the  diffusion of $-\Delta b$, but not with $(-\Delta)^{\alpha} b_{v}$ for $\alpha > \frac{1}{2}$ in \eqref{est 63}. In Proposition \ref{Proposition 3.1} we present new cancellations in the $H^{2}(\mathbb{R}^{2})$-estimate and overcome these difficulties. Finally, we will elaborate on our last result Theorem \ref{Theorem 2.3} concerning the $2\frac{1}{2}$-D Hall-MHD system after the statement of Theorem \ref{Theorem 2.2}. 

\section{Statement of main results} 
In this section we present our main results, all of which rely crucially on Proposition \ref{Proposition 3.1}. For simplicity, hereafter we assume that $\nu = \eta = \epsilon = 1$. 

\begin{theorem}\label{Theorem 2.1} 
Suppose that $(u^{\text{in}},b^{\text{in}}) \in H^{m} (\mathbb{R}^{3}) \times H^{m} (\mathbb{R}^{3})$ where $m > \frac{5}{2}$ is an integer and $\nabla\cdot u^{\text{in}} = \nabla\cdot b^{\text{in}} = 0$. If $(u,b)$ is a corresponding local smooth solution to the 3-D Hall-MHD system \eqref{est 1} over $[0,T)$   emanating from $(u^{\text{in}}, b^{\text{in}})$ and 
\begin{subequations}\label{est 49} 
\begin{align}
& u_{h} \in L_{T}^{r_{1}} L_{x}^{p_{1}} \hspace{4mm} \text{ where } \frac{3}{p_{1}} + \frac{2}{r_{1}} \leq 1, 3 < p_{1} \leq \infty, \label{est 49a} \\
& \nabla^{2} b_{h} \in L_{T}^{r_{2}} L_{x}^{p_{2}} \text{ where } \frac{3}{p_{2}} + \frac{2}{r_{2}} \leq 2, 2 \leq p_{2} \leq 3, \label{est 49b} 
\end{align}
\end{subequations} 
then for all $t \in [0,T]$, 
\begin{equation*}
\lVert u(t) \rVert_{H^{m}} + \lVert b(t) \rVert_{H^{m}} < \infty. 
\end{equation*} 
\end{theorem}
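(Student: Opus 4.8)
The plan is to reduce everything to an a priori bound on $\lVert u(t)\rVert_{H^{2}} + \lVert b(t)\rVert_{H^{2}}$ that is uniform over $[0,T)$, together with the space-time bound $\int_{0}^{T}(\lVert \nabla u\rVert_{H^{2}}^{2} + \lVert \nabla b\rVert_{H^{2}}^{2})\,dt < \infty$ coming from the diffusive terms. This suffices: by the blow-up criterion \eqref{est 57} (applicable since $m > \frac{5}{2} = 1 + \frac{n}{2}$) it is enough to show $\int_{0}^{T} (\lVert u\rVert_{BMO}^{2} + \lVert \nabla b\rVert_{BMO}^{2})\,dt < \infty$, and the embedding \eqref{est 130} reduces this to controlling $\int_{0}^{T}(\lVert u\rVert_{\dot H^{3/2}}^{2} + \lVert b\rVert_{\dot H^{5/2}}^{2})\,dt$. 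The first piece follows from $u \in L_{T}^{\infty}H^{2}$ by interpolation of $\dot H^{3/2}$ between $L^{2}$ and $\dot H^{2}$, and the second from interpolating $\dot H^{5/2}$ between $\dot H^{2}$ and $\dot H^{3}$, using $b \in L_{T}^{\infty}\dot H^{2} \cap L_{T}^{2}\dot H^{3}$. The $L^{2}$ energy estimate is standard and unconditional: testing \eqref{est 1a}--\eqref{est 1b} against $(u,b)$ and invoking \eqref{est 3} to annihilate the Hall term gives global control of $\lVert u\rVert_{L^{2}}^{2} + \lVert b\rVert_{L^{2}}^{2}$ and of $\int_{0}^{T}(\lVert \nabla u\rVert_{L^{2}}^{2} + \lVert \nabla b\rVert_{L^{2}}^{2})\,dt$. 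The entire difficulty is thus concentrated in the $\dot H^{2}$ estimate.

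For the $\dot H^{2}$ estimate I would apply $\Delta$ to \eqref{est 1a}--\eqref{est 1b}, test against $\Delta u$ and $\Delta b$, and add, so that the diffusive terms produce the good dissipation $-\lVert \nabla\Delta u\rVert_{L^{2}}^{2} - \lVert \nabla\Delta b\rVert_{L^{2}}^{2}$. The remaining contributions split into the MHD-type nonlinearities $(u\cdot\nabla)u$, $(b\cdot\nabla)b$, $(u\cdot\nabla)b$, $(b\cdot\nabla)u$ and the Hall term. The MHD-type terms are handled by integration by parts exploiting both $\nabla\cdot u = 0$ and $\nabla\cdot b = 0$, in the spirit of the cancellations \eqref{est 98}--\eqref{est 99}, so that after cancellation the non-absorbable factors involve only the horizontal velocity $u_{h}$ and the horizontal second derivatives $\nabla^{2}b_{h}$ rather than the full fields; these are precisely the quantities controlled by \eqref{est 49a}--\eqref{est 49b}. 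Each such term is then estimated by Hölder's inequality, Gagliardo--Nirenberg interpolation, and Young's inequality: a single top-order factor $\nabla\Delta u$ or $\nabla\Delta b$ is isolated and absorbed into the dissipation, leaving a prefactor of the form $\lVert u_{h}\rVert_{L^{p_{1}}}^{r_{1}}$ (resp.\ $\lVert \nabla^{2}b_{h}\rVert_{L^{p_{2}}}^{r_{2}}$) multiplying $\lVert u\rVert_{\dot H^{2}}^{2} + \lVert b\rVert_{\dot H^{2}}^{2}$. The Serrin-type relations $\frac{3}{p_{1}} + \frac{2}{r_{1}} \leq 1$ and $\frac{3}{p_{2}} + \frac{2}{r_{2}} \leq 2$, with $3 < p_{1} \leq \infty$ and $2 \leq p_{2} \leq 3$, are exactly the conditions making the interpolation exponents admissible and rendering the resulting coefficients integrable in time.

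The central step is the Hall term $\int_{\mathbb{R}^{3}} \Delta[\nabla\times(j\times b)]\cdot\Delta b\,dx$, where I would invoke Proposition \ref{Proposition 3.1}, whose $H^{2}(\mathbb{R}^{3})$ cancellations (the extension of \eqref{est 38}--\eqref{est 53} to the $\dot H^{2}$ level) bound this quantity by integrals in which the horizontal field $b_{h}$, together with its derivatives, is separated \emph{twice}. This double separation is decisive: it permits placing the genuinely dangerous derivatives on $b_{h}$, controlled through $\lVert \nabla^{2}b_{h}\rVert_{L^{p_{2}}}$ by \eqref{est 49b}, while the unique highest-order factor $\lvert \nabla\Delta b\rvert$ is absorbed into the dissipation via Young's inequality. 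As before, the range $p_{2}\in[2,3]$ and the scaling relation $\frac{3}{p_{2}} + \frac{2}{r_{2}} \leq 2$ guarantee that the leftover coefficient has the form $\lVert \nabla^{2}b_{h}\rVert_{L^{p_{2}}}^{r_{2}} \in L^{1}(0,T)$.

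Collecting all estimates yields a differential inequality $\frac{d}{dt}(\lVert u\rVert_{\dot H^{2}}^{2} + \lVert b\rVert_{\dot H^{2}}^{2}) + \lVert\nabla\Delta u\rVert_{L^{2}}^{2} + \lVert\nabla\Delta b\rVert_{L^{2}}^{2} \lesssim g(t)\,(\lVert u\rVert_{\dot H^{2}}^{2} + \lVert b\rVert_{\dot H^{2}}^{2})$ with $g \in L^{1}(0,T)$ assembled from $\lVert u_{h}\rVert_{L^{p_{1}}}^{r_{1}}$ and $\lVert \nabla^{2}b_{h}\rVert_{L^{p_{2}}}^{r_{2}}$; Grönwall's inequality then delivers the uniform $H^{2}$ bound and the attendant space-time dissipation bound, which by the first paragraph upgrade to the claimed $H^{m}$ bound. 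I expect the main obstacle to be the Hall-term bookkeeping: even granting Proposition \ref{Proposition 3.1}, one must distribute derivatives so that exactly one top-order factor ($\nabla\Delta b$ or $\nabla\Delta u$) is isolated for absorption while every remaining factor collapses to $u_{h}$ or $\nabla^{2}b_{h}$ with interpolation exponents lying in the admissible ranges of \eqref{est 49a}--\eqref{est 49b}; matching these exponents to the scaling constraints is the delicate part of the argument.
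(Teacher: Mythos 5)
Your overall reduction (blow-up criterion \eqref{est 57} plus the embedding \eqref{est 130}, so that an a priori $H^{2}(\mathbb{R}^{3})$ bound with its attendant $L_{T}^{2}H_{x}^{3}$ dissipation suffices) matches the paper, as does your use of Proposition \ref{Proposition 3.1} for the Hall term at the $\dot{H}^{2}$ level. The gap is in your treatment of the MHD-type nonlinearities. You propose to jump directly to the $\dot{H}^{2}$ estimate and to handle $(u\cdot\nabla)u$, $(u\cdot\nabla)b$, $(b\cdot\nabla)b$, $(b\cdot\nabla)u$ there by component-reduction cancellations ``in the spirit of \eqref{est 98}--\eqref{est 99}.'' But those cancellations are $H^{1}$-level facts: they arise from testing $(u\cdot\nabla)u$ against $\Delta u$ (or $\sum_{k=1}^{2}\partial_{k}^{2}u$) and using $\partial_{3}u_{3}=-\partial_{1}u_{1}-\partial_{2}u_{2}$ on the single transport derivative. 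The paper explicitly emphasizes that no analogous cancellation is known for $\int \Delta[(u\cdot\nabla)u]\cdot\Delta u\,dx$ even for the Navier--Stokes equations, and the hypothesis \eqref{est 49a} on $u_{h}$ (Serrin-scaling, $\tfrac{3}{p_{1}}+\tfrac{2}{r_{1}}\leq 1$) is calibrated for exactly one energy level above $L^{2}$; it is too weak to close a one-shot $\dot{H}^{2}$ estimate in which $u_{h}$ must be extracted from the $\Delta$-differentiated convection terms. As written, this step of your argument would not go through.

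The paper circumvents this with a two-stage bootstrap that your proposal omits. Proposition \ref{Proposition 4.1} first closes the $H^{1}$ estimate: there the convection terms admit the \eqref{est 98}--\eqref{est 99}-type component reductions, the Hall term is controlled by the $H^{1}$-level inequality \eqref{est 53} (not Proposition \ref{Proposition 3.1}), and the hypotheses \eqref{est 49a}--\eqref{est 49b} enter exactly at their natural scaling. Only then does Proposition \ref{Proposition 4.2} perform the $\dot{H}^{2}$ estimate, where the convection terms require \emph{no} component reduction at all: they are handled by the Kato--Ponce commutator estimate together with $\lVert \nabla u\rVert_{L^{3}}+\lVert\nabla b\rVert_{L^{3}}$ interpolated from the already-established $L_{T}^{\infty}H_{x}^{1}\cap L_{T}^{2}H_{x}^{2}$ bound, and only the Hall term needs the new cancellation \eqref{est 58}. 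If you insert this intermediate $H^{1}$ step and replace your $H^{2}$-level component-reduction claim for the convection terms by the commutator argument, the remainder of your outline (absorption of the single top-order factor into the dissipation, integrability of the coefficients $\lVert u_{h}\rVert_{L^{p_{1}}}^{\frac{2p_{1}}{p_{1}-3}}$ and $\lVert\nabla^{2}b_{h}\rVert_{L^{p_{2}}}^{\frac{2p_{2}}{2p_{2}-3}}$ under \eqref{est 49}, and Gr\"onwall) is consistent with the paper.
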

An immediate corollary of Theorem \ref{Theorem 2.1} is a regularity criterion in terms of only \eqref{est 49b} for the 3-D electron MHD system \eqref{est 48}.  

\begin{remark}
Ji and Lee in \cite[Theorem 2]{JL10} obtained a regularity criteria for the 3-D MHD system of the form 
\begin{align*} 
& u_{h} \in L_{T}^{r_{1}} L_{x}^{p_{1}} \text{ where } \frac{3}{p_{1}} + \frac{2}{r_{1}} \leq 1, 3 < p_{1} \leq \infty,  \\
& b_{h} \in L_{T}^{r_{2}} L_{x}^{p_{2}} \text{ where } \frac{3}{p_{2}} + \frac{2}{r_{2}} \leq 1, 3 < p_{2} \leq \infty,
\end{align*}  
and hence Theorem \ref{Theorem 2.1} can be seen as a successful extension of \cite[Theorem 2]{JL10} on the 3-D MHD system to that of the 3-D Hall-MHD system (see Remark \ref{Remark 2.3} (1)). Although the Hall-MHD system does not have a scaling-invariance property (recall Remark \ref{Remark 1.1}), \eqref{est 49} is considered to be the Hall-MHD system analogue of the scaling-invariant level because the Hall term is informally one derivative more singular than the non-linear terms of the NS equations and the MHD system (recall \eqref{est 150} and Remark \ref{Remark 1.1}). 
\end{remark} 

\begin{theorem}\label{Theorem 2.2} 
Let $\alpha \in (\frac{1}{2}, 1)$. Suppose that $b^{\text{in}} \in H^{3}(\mathbb{R}^{2})$ and $\nabla\cdot b^{\text{in}} = 0$. Then, there exists a unique solution $b$ such that 
\begin{align*}
b \in L^{\infty} ((0,\infty); H^{3}(\mathbb{R}^{2})), \hspace{2mm} b_{h} \in L^{2} ((0,\infty); H^{\frac{9}{2}}(\mathbb{R}^{2})), \hspace{2mm} b_{v} \in L^{2} ((0,\infty); H^{3+ \alpha} (\mathbb{R}^{2}))
\end{align*}
to \eqref{est 63} and $b\rvert_{t=0} = b^{\text{in}}$. 
\end{theorem}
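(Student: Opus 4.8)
The plan is to combine the local well-posedness theory with a closed ladder of a priori estimates, so that global regularity reduces to a uniform-in-time bound on $\lVert b(t)\rVert_{H^{3}}$ together with the stated space--time diffusion norms. By the local well-posedness in $H^{m}(\mathbb{R}^{2})$ for integer $m>2$ sketched in the Appendix (after \cite{CWW15a}), there is a unique smooth solution on a maximal interval $[0,T^{\ast})$ that extends past $T^{\ast}$ once the relevant norm remains finite; by the embedding \eqref{est 130} and a blow-up criterion of the type \eqref{est 64}, a bound $\int_{0}^{T}\lVert j\rVert_{BMO}^{2}\,dt<\infty$ for each finite $T$ already forces $T^{\ast}=\infty$, and since $j=\nabla\times b$ with $\dot H^{1}(\mathbb{R}^{2})\hookrightarrow BMO$ this follows from any $H^{2}(\mathbb{R}^{2})$ bound. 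Thus the heart of the matter is to propagate $H^{2}$---and then $H^{3}$---regularity with constants uniform on $(0,\infty)$. I would carry out all estimates formally on the smooth solution and make them rigorous on a Galerkin or mollified approximation, passing to the limit by Aubin--Lions.

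The estimates are organized as a ladder in $\dot H^{s}$ for $s=0,1,2,3$, exploiting that the diffusion in \eqref{est 63} contributes $\lVert\Lambda^{s+\frac{3}{2}}b_{h}\rVert_{L^{2}}^{2}+\lVert\Lambda^{s+\alpha}b_{v}\rVert_{L^{2}}^{2}$ to the $\dot H^{s}$ identity, while the Hall term is controlled by Proposition \ref{Proposition 3.1}. At the $L^{2}$ level the Hall term makes no contribution by the pointwise identity \eqref{est 3}, yielding $b\in L_{t}^{\infty}L_{x}^{2}$, $b_{h}\in L_{t}^{2}\dot H^{3/2}$, $b_{v}\in L_{t}^{2}\dot H^{\alpha}$. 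At the $\dot H^{1}$ level I would use \eqref{est 38} exactly as in \eqref{est 139}: the Hall contribution is absorbed into $\tfrac12\lVert\Lambda^{5/2}b_{h}\rVert_{L^{2}}^{2}$ at the cost of $C\lVert\Lambda^{3/2}b_{h}\rVert_{L^{2}}^{2}\lVert\nabla b\rVert_{L^{2}}^{2}$, and since $\int_{0}^{\infty}\lVert\Lambda^{3/2}b_{h}\rVert_{L^{2}}^{2}\,dt\lesssim1$ from the energy identity, Gronwall gives $b\in L_{t}^{\infty}\dot H^{1}$ with $b_{h}\in L_{t}^{2}\dot H^{5/2}$, $b_{v}\in L_{t}^{2}\dot H^{1+\alpha}$. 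The decisive structural feature is that the Gronwall coefficient closing the $\dot H^{s}$ step is precisely the horizontal diffusion norm produced at the $\dot H^{s-1}$ step, hence is time-integrable on $(0,\infty)$.

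The crucial step is $s=2$, where the weak vertical diffusion $(-\Delta)^{\alpha}b_{v}$ with $\alpha$ barely above $\tfrac12$ would defeat any naive estimate, because without cancellation the Hall term forces $b_{v}$ two derivatives above its own regularity. Here Proposition \ref{Proposition 3.1} is exactly what is needed: it bounds the $H^{2}$ Hall interaction by an integral in which the top derivatives fall on $b_{h}$ (separated twice, as in \eqref{est 38}) while $b_{v}$ enters only at the lowest order. Schematically,
\begin{equation*}
\Big\lvert\int_{\mathbb{R}^{2}}\nabla\times(j\times b)\cdot\Delta^{2}b\,dx\Big\rvert\lesssim\int_{\mathbb{R}^{2}}\lvert\nabla^{2}b\rvert\,\lvert\nabla^{2}b_{h}\rvert\,\lvert\nabla^{3}b_{h}\rvert\,dx\lesssim\lVert\Delta b\rVert_{L^{2}}\lVert\Lambda^{5/2}b_{h}\rVert_{L^{2}}\lVert\Lambda^{7/2}b_{h}\rVert_{L^{2}},
\end{equation*}
using H\"older and the $2$D embeddings $\lVert\nabla^{2}b_{h}\rVert_{L^{4}}\lesssim\lVert\Lambda^{5/2}b_{h}\rVert_{L^{2}}$ and $\lVert\nabla^{3}b_{h}\rVert_{L^{4}}\lesssim\lVert\Lambda^{7/2}b_{h}\rVert_{L^{2}}$. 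Absorbing $\lVert\Lambda^{7/2}b_{h}\rVert_{L^{2}}$ into the diffusion and invoking $\int_{0}^{\infty}\lVert\Lambda^{5/2}b_{h}\rVert_{L^{2}}^{2}\,dt\lesssim1$ from the $\dot H^{1}$ step closes the estimate by Gronwall, giving $b\in L_{t}^{\infty}\dot H^{2}$, $b_{h}\in L_{t}^{2}\dot H^{7/2}$, $b_{v}\in L_{t}^{2}\dot H^{2+\alpha}$. The role of $\alpha>\tfrac12$ is to supply the minimal parabolic smoothing of $b_{v}$ that controls the remaining lower-order coupling terms, matching the local well-posedness threshold \eqref{est 62}; the cancellation guarantees that no term ever asks more than this of $b_{v}$.

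Finally I would run the identical argument at $s=3$: by the same cancellation mechanism the $\dot H^{3}$ Hall interaction is dominated by $\lVert\Lambda^{3}b\rVert_{L^{2}}\lVert\Lambda^{7/2}b_{h}\rVert_{L^{2}}\lVert\Lambda^{9/2}b_{h}\rVert_{L^{2}}$, whose closing coefficient $\lVert\Lambda^{7/2}b_{h}\rVert_{L^{2}}^{2}$ is time-integrable by the $\dot H^{2}$ step, so Gronwall delivers $b\in L^{\infty}((0,\infty);H^{3})$, $b_{h}\in L^{2}((0,\infty);H^{9/2})$, $b_{v}\in L^{2}((0,\infty);H^{3+\alpha})$; combined with the continuation criterion this makes the solution global. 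Uniqueness I would obtain by a standard energy estimate for the difference $w=b-\tilde b$ of two solutions: subtracting the equations, testing with $w$, and bounding the bilinear Hall differences via the already-established regularity of $b,\tilde b$ yields $\tfrac{d}{dt}\lVert w\rVert_{L^{2}}^{2}\lesssim g(t)\lVert w\rVert_{L^{2}}^{2}$ with $g\in L^{1}_{\mathrm{loc}}$, whence $w\equiv0$. The main obstacle throughout is the $s=2$ estimate, where the strong horizontal diffusion $(-\Delta)^{3/2}b_{h}$ and the weak vertical diffusion $(-\Delta)^{\alpha}b_{v}$ are simultaneously in play and only the cancellation of Proposition \ref{Proposition 3.1}---which removes all high-order dependence on $b_{v}$---renders the two compatible.
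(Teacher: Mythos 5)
Your ladder is sound and matches the paper through the $H^{2}$ level: the $L^{2}$ identity via \eqref{est 3}, the optional $\dot H^{1}$ rung via \eqref{est 38}--\eqref{est 139}, and the decisive $\dot H^{2}$ step via Proposition \ref{Proposition 3.1} are exactly the paper's Proposition \ref{Proposition 5.1} (the paper in fact skips the $\dot H^{1}$ rung by interpolating $\lVert \nabla^{2}b_{h}\rVert_{L^{4}}^{2}\lesssim \lVert\Lambda^{3/2}b_{h}\rVert_{L^{2}}\lVert\Lambda^{7/2}b_{h}\rVert_{L^{2}}$ so that the Gronwall coefficient is the energy-level quantity $\lVert\Lambda^{3/2}b_{h}\rVert_{L^{2}}^{2}$ rather than your $\lVert\Lambda^{5/2}b_{h}\rVert_{L^{2}}^{2}$; both close). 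The continuation and uniqueness arguments are also standard and fine.

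The genuine gap is your $s=3$ step. You assert that ``by the same cancellation mechanism'' the $\dot H^{3}$ Hall interaction is dominated by $\lVert\Lambda^{3}b\rVert_{L^{2}}\lVert\Lambda^{7/2}b_{h}\rVert_{L^{2}}\lVert\Lambda^{9/2}b_{h}\rVert_{L^{2}}$, i.e.\ that all top-order derivatives can again be made to fall on $b_{h}$. That is an unproven extension of Proposition \ref{Proposition 3.1} to the pairing $\int \partial_{m}\partial_{k}\partial_{l}(j\times b)\cdot\partial_{m}\partial_{k}\partial_{l}j\,dx$: the cancellations in \eqref{est 86} come from a long, term-by-term bookkeeping of second-order pairings, and nothing guarantees the analogous third-order identity without redoing that computation. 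The paper deliberately avoids this. At $\dot H^{3}$ it uses only the elementary identity \eqref{est 3} to annihilate the single term $(j\times\partial_{m}\partial_{k}\partial_{l}b)\cdot\partial_{m}\partial_{k}\partial_{l}j$, and then estimates the remaining commutator-type terms $\RomanVII_{1},\RomanVII_{2}$ in \eqref{est 90} by \eqref{est 91}, Sobolev embedding and Gagliardo--Nirenberg (choosing $\epsilon$ with $\alpha>\tfrac12+\epsilon$ as in \eqref{est 140}), arriving at \eqref{est 164} whose Gronwall coefficient is $1+\lVert b\rVert_{\dot H^{2+\alpha}}^{2}$ --- integrable precisely because the $\dot H^{2}$ step delivered $b_{v}\in L_{T}^{2}H^{2+\alpha}$ and $b_{h}\in L_{T}^{2}H^{7/2}$. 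Note this also breaks your organizing principle that the closing coefficient is always the \emph{horizontal} diffusion norm from the previous rung: at $s=3$ the vertical norm $\lVert b_{v}\rVert_{\dot H^{2+\alpha}}$ is needed too, and this is where the hypothesis $\alpha>\tfrac12$ (beyond the local theory) actually enters. You should either supply the third-order cancellation computation or replace your $s=3$ step with this softer argument.
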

\noindent The upper bound of $\alpha$ in the hypothesis of Theorem \ref{Theorem 2.2} is only for convenience in proof. Theorem \ref{Theorem 2.2} improves \cite[Theorem 2.4]{RY22} which required $(-\Delta)^{\frac{3}{2}} b_{h} - \Delta b_{v}$ (recall \eqref{est 39b}), which in turn improved \cite[Theorem 2.3]{Y19a}.  Considering Remark \ref{Remark 1.1}, Theorem \ref{Theorem 2.2} allows us to give horizontal components the strength of critical diffusion while give the remaining component significantly weaker diffusion, by almost as much as a full Laplacian, and still obtain global regularity results (cf. \cite{YJW19}). 

We can extend Theorem \ref{Theorem 2.2} to the $2\frac{1}{2}$-D Hall-MHD system in the following manner: 
\begin{theorem}\label{Theorem 2.3}
Let $\alpha \in (\frac{1}{2}, 1)$. Suppose that $u^{\text{in}}, b^{\text{in}} \in H^{3}(\mathbb{R}^{2})$ and $\nabla\cdot u^{\text{in}} = \nabla\cdot b^{\text{in}} = 0$. Then, there exists a unique solution $(u,b)$ such that 
\begin{align*}
&u \in L^{\infty} ((0,\infty); H^{3}(\mathbb{R}^{2})), \hspace{3mm} u_{h} \in L^{2} ((0,\infty); H^{3+ \alpha}(\mathbb{R}^{2})), \hspace{3mm} u_{v} \in L^{2} ((0,\infty); H^{4} (\mathbb{R}^{2})), \\
&b \in L^{\infty} ((0,\infty); H^{3}(\mathbb{R}^{2})), \hspace{3mm} b_{h} \in L^{2} ((0,\infty); H^{\frac{9}{2}}(\mathbb{R}^{2})), \hspace{5mm} b_{v} \in L^{2} ((0,\infty); H^{3+ \alpha} (\mathbb{R}^{2})) 
\end{align*}
to 
\begin{subequations}\label{est 151}
\begin{align}
& \partial_{t} u + (u\cdot\nabla) u + \nabla \pi + (-\Delta)^{\alpha} u_{h} -\Delta u_{v}=  (b\cdot\nabla) b  \hspace{24mm} \text{ for } t > 0, \label{est 151a}\\
& \partial_{t} b + (u\cdot\nabla) b + \nabla \times ( j \times b) +  (-\Delta)^{\frac{3}{2}} b_{h} + (-\Delta)^{\alpha} b_{v} =  (b\cdot\nabla) u  \hspace{6mm} \text{ for } t > 0,  \label{est 151b}\\
& \nabla\cdot u = 0  \hspace{77mm} \text{ for } t > 0, 
\end{align}
\end{subequations} 
and $(u, b)\rvert_{t=0} = (u^{\text{in}}, b^{\text{in}})$. 
\end{theorem}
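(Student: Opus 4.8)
The plan is to prove Theorem~\ref{Theorem 2.3} by the continuation method. Local well-posedness of \eqref{est 151} in $H^{3}(\mathbb{R}^{2})$ follows from the scheme sketched in the Appendix (after \cite{CWW15a}), the point being that the weakest magnetic diffusion present, $(-\Delta)^{\alpha} b_{v}$ with $\alpha > \tfrac{1}{2}$, still meets the threshold \eqref{est 62} that the Hall term demands. It therefore suffices to produce, on every finite interval $[0,T]$, an a priori bound showing that $\lVert u(t)\rVert_{H^{3}} + \lVert b(t)\rVert_{H^{3}}$ cannot blow up. The guiding observation is that in two dimensions an $L^{\infty}_{T}H^{2}$ bound already forces $\nabla b \in L^{\infty}_{T}\dot{H}^{1} = L^{\infty}_{T}\dot{H}^{n/2} \hookrightarrow L^{\infty}_{T}BMO$ and $u \in L^{\infty}_{T}BMO$ by \eqref{est 130}, so that a $BMO$-type continuation criterion of the same flavour as \eqref{est 57} (established alongside the local theory for \eqref{est 151}) is met; consequently the whole game reduces to establishing the a priori $L^{\infty}_{T}H^{2}$ bound, and the propagation of the full $H^{3}$ regularity is then automatic.

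I would build the $H^{2}$ bound through the usual hierarchy. At the $L^{2}$ level, testing \eqref{est 151a} against $u$ and \eqref{est 151b} against $b$, the Hall term disappears through the pointwise identity \eqref{est 3}, the coupling terms $(b\cdot\nabla)b\cdot u$ and $(b\cdot\nabla)u\cdot b$ cancel after integrating by parts with $\nabla\cdot u=0$, and the transport terms drop; this yields $u,b\in L^{\infty}_{T}L^{2}$ together with the dissipation bounds $\Lambda^{\alpha}u_{h},\nabla u_{v},\Lambda^{3/2}b_{h},\Lambda^{\alpha}b_{v}\in L^{2}_{T}L^{2}$. At the $H^{1}$ level the only genuinely singular contribution is the Hall term, which is handled exactly as in \eqref{est 139}: inequality \eqref{est 38} places the dangerous factors on $b_{h}$ in the form $\lvert\nabla b_{h}\rvert\,\lvert\nabla^{2}b_{h}\rvert$, which the horizontal diffusion $\Lambda^{3/2}b_{h}$ dominates through $\dot{H}^{1}(\mathbb{R}^{2})\hookrightarrow L^{4}(\mathbb{R}^{2})$, and the term is absorbed. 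The velocity, transport, and Lorentz terms are of MHD type; I would bound them by two-dimensional Gagliardo--Nirenberg and Ladyzhenskaya inequalities, leaning on the full diffusion of $u_{v}$ and the magnetic dissipation to compensate for the weakness of $(-\Delta)^{\alpha}u_{h}$ when $\alpha$ is close to $\tfrac{1}{2}$, and close by Gr\"onwall.

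The crux is the $H^{2}$ estimate, and this is exactly where Proposition~\ref{Proposition 3.1} is indispensable. Testing \eqref{est 151b} against $\Delta^{2}b$, the Hall contribution $\int_{\mathbb{R}^{2}}\nabla\times(j\times b)\cdot\Delta^{2}b\,dx$ is, naively, one derivative too singular, and the naive fix of pulling out $\lVert\nabla b\rVert_{L^{\infty}}$ is unavailable, since at this stage we only control $b\in L^{\infty}_{T}H^{2}$, for which $\nabla b\in H^{1}(\mathbb{R}^{2})$ just fails to embed into $L^{\infty}$. Proposition~\ref{Proposition 3.1} resolves this by extending \eqref{est 38}--\eqref{est 53} to this order while keeping $b_{h}$ separated twice, so that the entire top-order burden is thrown onto the strongly diffused horizontal component and absorbed into the $\Lambda^{7/2}b_{h}$ dissipation generated at the $H^{2}$ level, leaving a Gr\"onwall coefficient that the lower estimates already render integrable in time. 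This delivers $u,b\in L^{\infty}_{T}H^{2}$, hence $L^{\infty}_{T}H^{3}$ by the continuation criterion. Once $b\in L^{\infty}_{T}H^{3}\hookrightarrow L^{\infty}_{T}C^{1}$ is in hand, the time-integrated regularity claimed in the theorem is extracted by revisiting the $H^{3}$ energy balance: now $\lVert\nabla b\rVert_{L^{\infty}}$ is available, the Hall term at this order closes by crude product estimates because the diffusion $\Lambda^{9/2}b_{h}$, $\Lambda^{3+\alpha}b_{v}$, $\Lambda^{3+\alpha}u_{h}$, $\Lambda^{4}u_{v}$ is strong, and integrating over $[0,T]$ yields $b_{h}\in L^{2}_{T}H^{9/2}$, $b_{v}\in L^{2}_{T}H^{3+\alpha}$, $u_{h}\in L^{2}_{T}H^{3+\alpha}$, $u_{v}\in L^{2}_{T}H^{4}$.

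Uniqueness I would obtain from a difference estimate: writing the equations for $(u-\tilde u,b-\tilde b)$ of two solutions with identical data, I would control a low-order norm (say $L^{2}$ or $H^{1}$) by Gr\"onwall, the delicate piece being the difference of the two Hall terms, which is tamed using the surplus magnetic diffusion and the $L^{\infty}_{T}H^{3}$ regularity of both solutions. The single hard obstacle is the Hall term in the $H^{2}$ estimate: without a cancellation it is supercritical at precisely the stage where no $L^{\infty}$ bound on $\nabla b$ is yet available, and Proposition~\ref{Proposition 3.1} is what rescues it; the remaining care is purely bookkeeping, namely ensuring that the weak horizontal velocity diffusion $(-\Delta)^{\alpha}u_{h}$ is never asked to absorb more than the full vertical diffusion and the magnetic bounds can supply.
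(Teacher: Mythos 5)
Your outline correctly locates one of the two essential difficulties --- the Hall term at the $H^{2}$ level, resolved by Proposition \ref{Proposition 3.1} --- but it misses the other one entirely, and the step you dismiss as ``purely bookkeeping'' is in fact where your argument breaks down. The problem is the $H^{1}$ estimate for the velocity. In the $2\frac{1}{2}$-D setting the advective term contributes $\int_{\mathbb{R}^{2}}\lvert \nabla u_{h}\rvert\,\lvert\nabla u\rvert^{2}dx$, and with only $(-\Delta)^{\alpha}u_{h}$, $\alpha$ close to $\frac{1}{2}$, the dissipation generated at this level is $\lVert u_{h}\rVert_{\dot{H}^{1+\alpha}}^{2}$; after H\"older and Sobolev the term is bounded by $\lVert u_{h}\rVert_{\dot{H}^{1+\alpha}}\lVert\nabla u\rVert_{L^{2}}\bigl(\lVert u_{h}\rVert_{H^{1+\alpha}}+\lVert u_{v}\rVert_{H^{2}}\bigr)$, and Young's inequality leaves a Gr\"onwall coefficient $\lVert u_{h}\rVert_{H^{1+\alpha}}^{2}$ that must be in $L_{T}^{1}$ \emph{before} the $H^{1}$ estimate is run. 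The basic energy inequality \eqref{est 119} only gives $u_{h}\in L_{T}^{2}\dot{H}_{x}^{\alpha}$, which is a full derivative short; neither the full diffusion of $u_{v}$ nor the magnetic dissipation, which you propose to ``lean on,'' touches this term. So the $H^{1}$ estimate as you describe it does not close.

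The paper's resolution is structural, not technical: one takes the curl of \eqref{est 151a}, observes that the vortex-stretching term $(\omega\cdot\nabla)u_{3}$ vanishes identically in $2\frac{1}{2}$-D by \eqref{est 170a}, and that by \eqref{est 150} the curl of the Lorentz force equals $[\nabla\times(j\times b)]_{3}$, which cancels exactly against the Hall term in the $b_{3}$-equation once one passes to $z_{3}=\omega_{3}+b_{3}$ as in \eqref{est 156}--\eqref{est 154}; the matching $(-\Delta)^{\alpha}$ diffusions on $\omega_{3}$ and $b_{3}$ are what make this sum usable. The resulting transport-type equation for $z_{3}$ yields $u_{h}\in L_{T}^{\infty}H_{x}^{1}\cap L_{T}^{2}H_{x}^{1+\alpha}$ (Proposition \ref{Proposition 6.1}) from the $L^{2}$ data alone, and only then do the $H^{1}$ and $H^{2}$ estimates (Propositions \ref{Proposition 6.2}--\ref{Proposition 6.3}) close. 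Your proposal contains no substitute for this cancellation. Separately, your plan to stop at $L_{T}^{\infty}H^{2}$ and invoke a $BMO$ continuation criterion in the spirit of \eqref{est 57} assumes such a criterion for the anisotropic, fractionally dissipated system \eqref{est 151}, which is not established and is not obviously inherited from \cite{CL14}; the paper instead performs the $H^{3}$ estimate directly, where the commutator bounds \eqref{est 92}--\eqref{est 95} use $\alpha>\frac{1}{2}$ in an essential way and are not the routine product estimates you suggest.
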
  
\noindent Again, the upper bound of $\alpha$ in the hypothesis of Theorem \ref{Theorem 2.3} is only for convenience in proof. Theorem \ref{Theorem 2.3} improves \cite[Theorem 2.4]{RY22} not only in the magnetic but also viscous diffusion as the viscous diffusion in \eqref{est 39a} was $-\Delta u$. Let us make multiple comments. 

\begin{remark}\label{Remark 2.2} 
\indent
\begin{enumerate}
\item In the proof of Theorem \ref{Theorem 2.3}, our strategy is to apply a curl operator on \eqref{est 151a} and study the equation of $\omega_{3}$, the third component of vorticity $\omega$. The convenience of this equation is that the difficult non-linear terms vanish, namely
\begin{subequations}\label{est 170} 
\begin{align}
&(\omega\cdot \nabla) u_{3} = \omega_{1} \partial_{1} u_{3} + \omega_{2} \partial_{2} u_{3} = \partial_{2} u_{3} \partial_{1} u_{3} + (-\partial_{1} u_{3}) \partial_{2} u_{3} = 0, \label{est 170a}\\
&(j\cdot\nabla) b_{3} = j_{1}\partial_{1} b_{3} + j_{2} \partial_{2} b_{3} = \partial_{2} b_{3} \partial_{1} b_{3} + (-\partial_{1} b_{3}) \partial_{2} b_{3} = 0.\label{est 170b} 
\end{align}
\end{subequations}
Such cancellations are automatic in the 2-D case in which $u_{3}$ and $b_{3}$ vanish; however, the third component is non-trivial in the $2\frac{1}{2}$-D case, making \eqref{est 170} less obvious. Indeed, in sharp contrast, analogous difficult terms in the equation of the third component of the current density $j_{3}$ do not vanish because
\begin{align*}
& (j\cdot \nabla) u_{3} = j_{1} \partial_{1} u_{3} + j_{2} \partial_{2} u_{3} = \partial_{2} b_{3} \partial_{1} u_{3} + (-\partial_{1} b_{3}) \partial_{2} u_{3}, \\
& (\omega \cdot\nabla) b_{3} = \omega_{1} \partial_{1} b_{3} + \omega_{2} \partial_{2} b_{3} = \partial_{2} u_{3} \partial_{1} b_{3} + (-\partial_{1} u_{3}) \partial_{2} b_{3}, 
\end{align*}
and they appear with opposite signs and thus they do not cancel out even in sum (e.g., \cite[Equations (18)-(19)]{Y16c}). We came to the realization of \eqref{est 170} upon considering the proof of the global regularity of the $2\frac{1}{2}$-D Euler equations. Upon considering the equation of $\omega_{3}$, we see that the viscous diffusion $(-\Delta)^{\alpha} u_{h}$ in \eqref{est 151a} becomes $(-\Delta)^{\alpha} \omega_{v}$ that matches the structure of the magnetic diffusion $(-\Delta)^{\alpha} b_{v}$ in \eqref{est 151b} so that the sum $\omega + b$ will have a favorable structure for us (see \eqref{est 156}-\eqref{est 154}).  
\item We recall that Cao and Wu \cite{CW11} proved the global regularity of the following 2-D MHD system with partial dissipation and magnetic diffusion:
\begin{subequations}\label{est 152} 
\begin{align}
& \partial_{t} u + (u\cdot\nabla) u + \nabla \pi + \partial_{1}^{2} u=  (b\cdot\nabla) b  \hspace{16mm} \text{ for } t > 0, \label{est 152a}\\
& \partial_{t} b + (u\cdot\nabla) b + \partial_{2}^{2} b =  (b\cdot\nabla) u  \hspace{24mm} \text{ for } t > 0,  \label{est 152b}\\
& \nabla\cdot u = 0  \hspace{54mm} \text{ for } t > 0, 
\end{align}
\end{subequations}
and also in the case of viscous diffusion $\partial_{2}^{2} u$ and magnetic diffusion $\partial_{1}^{2} b$. On one hand, \eqref{est 152} is the 2-D MHD system while \eqref{est 151} is the $2\frac{1}{2}$-D Hall-MHD system. On the other hand, the diffusion strength somehow complement one another; i.e., $\partial_{1}^{2} u$ and $\partial_{2}^{2} b$ in \eqref{est 152} while the need for strong magnetic diffusion $(-\Delta)^{\frac{3}{2}} b_{h} + (-\Delta)^{\alpha} b_{v}$ for $\alpha > \frac{1}{2}$ in Theorem \ref{Theorem 2.3} can be offset by relatively weak viscous diffusion $(-\Delta)^{\alpha} u_{h} - \Delta u_{v}$. 
\item The $2\frac{1}{2}$-D Hall-MHD system, for which the global regularity issue remains open, can be written as follows:
\begin{subequations}\label{est 153}
\begin{align}
& \partial_{t} u + (u\cdot\nabla) u + \nabla \pi -\Delta u_{h} -\Delta u_{v}=  (b\cdot\nabla) b  \hspace{17mm} \text{ for } t > 0, \\
& \partial_{t} b + (u\cdot\nabla) b +  \nabla \times ( j \times b) -\Delta b_{h} -\Delta b_{v} =  (b\cdot\nabla) u  \hspace{6mm} \text{ for } t > 0,  \\
& \nabla\cdot u = 0  \hspace{65mm} \text{ for } t > 0. 
\end{align}
\end{subequations} 
We observe that \eqref{est 153} has a total of six derivatives in viscous diffusion and six derivatives in magnetic diffusion
\begin{align*}
-\Delta u_{1}, -\Delta u_{2}, -\Delta u_{3}, \hspace{3mm} -\Delta b_{1}, -\Delta b_{2}, -\Delta b_{3},
\end{align*}
summing to 12. Analogous sums for \eqref{est 151} in Theorem \ref{Theorem 2.3} is $4\alpha + 2$ in viscous diffusion and $6 + 2 \alpha$ in magnetic diffusion
\begin{align*}
(-\Delta)^{\alpha} u_{1}, (-\Delta)^{\alpha} u_{2}, -\Delta u_{3}, \hspace{3mm} (-\Delta)^{\frac{3}{2}} b_{1}, (-\Delta)^{\frac{3}{2}} b_{2}, (-\Delta)^{\alpha} b_{3}, 
\end{align*}
summing to $ 8 + 6 \alpha$ for $\alpha > \frac{1}{2}$ and thus larger than, but arbitrarily close to, 11. The fact that we need $(6 + 2 \alpha)$-many derivatives in the magnetic diffusion rather than 6 can be understood as the effect from the Hall term (recall our discussion at \eqref{est 150} and Remark \ref{Remark 1.1}).
\end{enumerate} 
\end{remark} 
 
\begin{remark}\label{Remark 2.3}
We end with a few open questions for future work. 
\begin{enumerate}
\item It will be of great interest if we can improve \eqref{est 49b} of Theorem \ref{Theorem 2.1} from $\nabla^{2} b_{h}$ to $\nabla b_{h}$, of course with different conditions on $p_{2}$ and $r_{2}$.
\item It is of great interest if we can improve Theorems \ref{Theorem 2.2}-\ref{Theorem 2.3} by reducing the required strength of diffusion. In relevance, we recall how the global regularity issue of the 2-D MHD system with zero viscous diffusion has caught much attention and made remarkable progress in the past decade (e.g., \cite{CWY14, FMMNZ14, JZ14a, Y18} and references therein).
\end{enumerate} 
\end{remark} 

In Section \ref{Section 3} we prove Proposition \ref{Proposition 3.1} about the new cancellations within the Hall term upon $H^{2}(\mathbb{R}^{n})$-estimates, $n \in \{2,3\}$. In Section \ref{Section 4}, we prove Theorem \ref{Theorem 2.1}. Taking $u \equiv 0$ in \eqref{est 151} does not deduce exactly the electron MHD system \eqref{est 63} as it additionally requires $\nabla \pi = (b\cdot\nabla)b$; moreover, the extension of Theorem \ref{Theorem 2.2} on the electron MHD system \eqref{est 63} to Theorem \ref{Theorem 2.3} on the Hall-MHD system \eqref{est 151} does not seem trivial; thus, we will prove Theorem \ref{Theorem 2.2} in Section \ref{Section 5} and then Theorem \ref{Theorem 2.3} in Section \ref{Section 6}.  

\section{The cancellation within the Hall term}\label{Section 3} 
The following is the crux of the proofs of all of Theorems \ref{Theorem 2.1}-\ref{Theorem 2.3}. 

\begin{proposition}\label{Proposition 3.1} 
\indent
\begin{enumerate}
\item Suppose that $b (x) = (b_{1}, b_{2}, b_{3})(x_{1}, x_{2},x_{3})$ is smooth. Then it satisfies 
\begin{equation}\label{est 58} 
\int_{\mathbb{R}^{3}}  \Delta \nabla \times (j\times b) \cdot \Delta b dx  \lesssim \int_{\mathbb{R}^{3}}  \lvert \nabla^{2} b_{h} \rvert ( \lvert \nabla b \rvert \lvert \nabla^{3} b \rvert + \lvert \nabla^{2} b_{v} \rvert \lvert \nabla^{2} b \rvert)dx. 
\end{equation} 
\item Suppose that $b (x) = (b_{1}, b_{2}, b_{3})(x_{1}, x_{2})$ is smooth and $\nabla\cdot b = 0$. Then it satisfies 
\begin{equation}\label{est 86}
\int_{\mathbb{R}^{2}} \Delta \nabla \times (j\times b) \cdot \Delta b dx \lesssim \int_{\mathbb{R}^{2}} ( \lvert \nabla b_{h} \rvert \lvert \nabla^{3} b_{h} \rvert + \lvert \nabla^{2} b_{h} \rvert^{2}) \lvert \nabla^{2} b_{v} \rvert dx. 
\end{equation} 
\end{enumerate}
\end{proposition}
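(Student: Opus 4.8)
The plan is to integrate by parts so as to bring the problem into the range of the algebraic identity \eqref{est 3}. Since $\Delta$ commutes with $\nabla\times$ and $\int(\nabla\times F)\cdot G\,dx=\int F\cdot(\nabla\times G)\,dx$ for decaying fields, I would first rewrite the left-hand side as $\int\Delta(j\times b)\cdot(\nabla\times\Delta b)\,dx=\int\Delta(j\times b)\cdot\Delta j\,dx$, using $\nabla\times\Delta b=\Delta\nabla\times b=\Delta j$. This is the crucial maneuver: transferring the curl onto the test field turns $\Delta b$ into $\Delta j$, after which the worst term will be annihilated algebraically rather than merely estimated.

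Next I would expand $\Delta(j\times b)=(\Delta j)\times b+2\sum_{k}(\partial_{k}j)\times(\partial_{k}b)+j\times(\Delta b)$ by the Leibniz rule and pair each piece with $\Delta j$. The top-order term $\int((\Delta j)\times b)\cdot\Delta j\,dx$ vanishes identically by \eqref{est 3} with $\Theta=\Delta j$ and $\Psi=b$. This removes, at one stroke and with no appeal to the divergence-free condition, the only contribution carrying four derivatives on a single copy of $b$, which is exactly the structure that has no counterpart on the right-hand sides of \eqref{est 58} and \eqref{est 86}.

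What then remains are the two groups $2\sum_{k}\int((\partial_{k}j)\times(\partial_{k}b))\cdot\Delta j\,dx$ and $\int(j\times\Delta b)\cdot\Delta j\,dx$, whose integrands distribute derivatives as $\nabla^{2}\cdot\nabla^{1}\cdot\nabla^{3}$ and $\nabla^{1}\cdot\nabla^{2}\cdot\nabla^{3}$ respectively, already within the $\nabla^{3}$ budget of the target bounds. The core of the argument is to reorganize these two groups -- writing $\Delta j=\nabla\times\Delta b$, expanding in components, and integrating by parts while invoking $\nabla\cdot b=0$ and $\nabla\cdot j=0$ -- so that every surviving integrand displays a horizontal factor. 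The two admissible shapes in \eqref{est 58}, namely $|\nabla^{2}b_{h}|\,|\nabla b|\,|\nabla^{3}b|$ and $|\nabla^{2}b_{h}|\,|\nabla^{2}b_{v}|\,|\nabla^{2}b|$, are what accommodate these two groups: the portion of each factor that can be written horizontally feeds the first shape, while any residual vertical second derivative $\nabla^{2}b_{v}$ must come paired with a horizontal $\nabla^{2}b_{h}$ and lands in the second shape.

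For part (2) I would run the same reduction but exploit the finer $2\frac{1}{2}$-dimensional structure -- $\partial_{3}=0$, the relations $j_{h}=(\partial_{2}b_{3},-\partial_{1}b_{3},0)$ and $j_{3}=\partial_{1}b_{2}-\partial_{2}b_{1}$, and the scalar constraint $\partial_{1}b_{1}+\partial_{2}b_{2}=0$ -- to force a factor $\nabla^{2}b_{v}=\nabla^{2}b_{3}$ out of \emph{every} surviving term, which yields the sharper estimate \eqref{est 86} with its two horizontal factors and is precisely what later permits the weak vertical diffusion in Theorem \ref{Theorem 2.2}. I expect the main obstacle to be organizational rather than conceptual: once \eqref{est 3} disposes of the top-order term, the two remaining groups still unfold into a large number of component terms, and confirming that each one carries the required horizontal factor -- and, in part (2), the vertical factor as well -- calls for careful and lengthy bookkeeping of integrations by parts, the anisotropic accounting of part (2) being the most delicate point.
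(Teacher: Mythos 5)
Your reduction is exactly the one the paper uses: transferring the curl to get $\int\Delta(j\times b)\cdot\Delta j\,dx$, expanding by Leibniz, and killing the top-order term $\int((\Delta j)\times b)\cdot\Delta j\,dx$ via \eqref{est 3}. Up to that point the proposal is correct and matches the paper. However, the remainder of your argument is a statement of the goal rather than a proof of it, and it glosses over precisely the step that constitutes the content of the proposition. After the reduction, the group $2\sum_{k}\int((\partial_{k}j)\times(\partial_{k}b))\cdot\Delta j\,dx$ contains, among its components, terms built \emph{entirely} from $b_{3}$, for instance
\begin{equation*}
-2\sum_{k,l}\int_{\mathbb{R}^{n}}\partial_{k}b_{3}\,\partial_{k}\partial_{1}b_{3}\,\partial_{l}^{2}\partial_{2}b_{3}\,dx
\quad\text{and}\quad
2\sum_{k,l}\int_{\mathbb{R}^{n}}\partial_{k}b_{3}\,\partial_{k}\partial_{2}b_{3}\,\partial_{l}^{2}\partial_{1}b_{3}\,dx,
\end{equation*}
and the group $\int(j\times\Delta b)\cdot\Delta j\,dx$ produces analogous all-$b_{3}$ terms. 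An integrand of the shape $|\nabla b_{v}||\nabla^{2}b_{v}||\nabla^{3}b_{v}|$ contains no horizontal factor at all, so it cannot be ``reorganized by integration by parts'' into the right-hand side of \eqref{est 58}, every summand of which carries $|\nabla^{2}b_{h}|$. No amount of bookkeeping manufactures a horizontal factor out of a term that only involves $b_{3}$. The actual mechanism, which your proposal does not identify, is that these terms vanish identically when paired antisymmetrically: writing $\partial_{k}b_{3}\partial_{k}\partial_{i}b_{3}=\tfrac12\partial_{i}(\partial_{k}b_{3})^{2}$ and integrating by parts, the two displayed terms cancel exactly (and similarly for the corresponding pairs inside the second group, where one must first integrate by parts in $l$ and exploit a symmetry swap $k\leftrightarrow l$). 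These exact cancellations --- not estimates --- are the new discovery the proposition encodes, and part (2) requires a further layer of them (the divergence-free substitution $\partial_{1}b_{1}=-\partial_{2}b_{2}$ applied to specific all-$b_{1}$ and all-$b_{2}$ terms) to force a $|\nabla^{2}b_{v}|$ factor into every survivor.

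Two smaller points. First, part (1) of the proposition does not assume $\nabla\cdot b=0$, and the paper's proof of \eqref{est 58} never uses it; your plan invokes $\nabla\cdot b=0$ and $\nabla\cdot j=0$ in the general reorganization, which is not available there. Second, your heuristic that the derivative counts $\nabla^{2}\cdot\nabla\cdot\nabla^{3}$ and $\nabla\cdot\nabla^{2}\cdot\nabla^{3}$ ``fit the budget'' is true but irrelevant to the difficulty: the issue is the placement of the horizontal index, not the total number of derivatives. As written, the proposal would not compile into a proof without supplying the specific cancelling pairs.
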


The inequality \eqref{est 58} will be used to prove Theorem \ref{Theorem 2.1} concerning a regularity criterion in terms of $\nabla^{2} b_{h}$. To prove Theorems \ref{Theorem 2.2}-\ref{Theorem 2.3}, we need to separate $b_{h}$ twice in the upper bound; moreover, it is well-known that the derivatives must be relatively balanced to be able to close the necessary estimates. This is the context of the inequality \eqref{est 86}; indeed, there is no ``$\lvert b_{h} \rvert$'' in the upper bound of \eqref{est 86}.  

\begin{remark}\label{Remark 3.1}  
We mention that we can extend \eqref{est 86} to the 3-D case as follows. If $b (x) = (b_{1}, b_{2}, b_{3})(x_{1}, x_{2},x_{3})$ is smooth and $\nabla\cdot b= 0$, then 
\begin{equation}\label{est 100} 
\int_{\mathbb{R}^{3}}  \Delta \nabla \times (j\times b) \cdot \Delta b dx  \lesssim \int_{\mathbb{R}^{3}} \lvert \nabla b \rvert \lvert \nabla^{2} b_{h} \rvert \lvert \nabla^{3} b_{h} \rvert + \lvert \nabla^{2} b_{v} \rvert (\lvert \nabla^{2} b_{h} \rvert^{2} + \lvert \nabla b_{h} \rvert \lvert \nabla^{3} b_{h} \rvert) dx; 
\end{equation} 
we emphasize that it separates $b_{h}$ twice and derivatives are relatively balanced. For the purpose of proving Theorem \ref{Theorem 2.1}, \eqref{est 58} suffices, and the proof of \eqref{est 100} is more difficult than those of \eqref{est 58} and \eqref{est 86}. Nonetheless, because it may be useful for future works, we leave its proof in the Appendix. 
\end{remark} 

\begin{proof}[Proof of Proposition \ref{Proposition 3.1}]
As we will see, all these inequalities \eqref{est 58}-\eqref{est 100} will be deduced from the identity \eqref{est 61}. Up to \eqref{est 61} we will write $\int_{\mathbb{R}^{n}} \hdots dx$ and $\sum_{k,l=1}^{n}$ and we will specify $n \in \{2,3\}$ after \eqref{est 61} is derived; of course, all the terms involving $\partial_{3}$ are considered to be zero in the $2\frac{1}{2}$-D case. We compute  the Hall term as follows:
\begin{equation}\label{est 28}
\int_{\mathbb{R}^{n}}  \Delta \nabla \times (j\times b) \cdot \Delta b dx = \int_{\mathbb{R}^{n}}  \sum_{k=1}^{n} \partial_{k}^{2} (j\times b) \cdot \sum_{l=1}^{n} \partial_{l}^{2} j dx = \RomanI + \RomanII 
\end{equation} 
where due to \eqref{est 3}, 
\begin{equation}\label{est 11} 
\RomanI \triangleq 2 \sum_{k,l=1}^{n} \int_{\mathbb{R}^{n}}  (\partial_{k} j \times \partial_{k} b) \cdot \partial_{l}^{2} j dx\hspace{2mm} \text{ and } \hspace{2mm} \RomanII \triangleq \sum_{k,l=1}^{n} \int_{\mathbb{R}^{n}}  (j \times \partial_{k}^{2} b) \cdot \partial_{l}^{2} j dx.
\end{equation} 
We first decompose $\RomanI$ from \eqref{est 11} as 
\begin{equation}\label{est 29}
\RomanI = \sum_{i=1}^{6} \RomanI_{i}
\end{equation} 
where 
\begin{subequations}\label{est 5a} 
\begin{align}
\RomanI_{1} \triangleq & 2 \sum_{k,l=1}^{n} \int_{\mathbb{R}^{n}}  \partial_{k} j_{2} \partial_{k} b_{3} \partial_{l}^{2} j_{1}dx,  \hspace{14mm} \RomanI_{2} \triangleq  -2 \sum_{k,l=1}^{n} \int_{\mathbb{R}^{n}} \partial_{k} j_{3}\partial_{k} b_{2}\partial_{l}^{2} j_{1}dx, \\
\RomanI_{3} \triangleq& -2 \sum_{k,l=1}^{n} \int_{\mathbb{R}^{n}}  \partial_{k} j_{1}\partial_{k} b_{3}\partial_{l}^{2} j_{2}dx, \hspace{10mm}  \RomanI_{4} \triangleq 2 \sum_{k,l=1}^{n} \int_{\mathbb{R}^{n}}  \partial_{k} j_{3} \partial_{k} b_{1}\partial_{l}^{2} j_{2}dx, \\
\RomanI_{5} \triangleq& 2 \sum_{k,l=1}^{n} \int_{\mathbb{R}^{n}}  \partial_{k}j_{1}\partial_{k} b_{2} \partial_{l}^{2} j_{3}dx, \hspace{14mm}  \RomanI_{6} \triangleq -2 \sum_{k,l=1}^{n}\int_{\mathbb{R}^{n}}  \partial_{k} j_{2}\partial_{k}b_{1}\partial_{l}^{2} j_{3}dx.
\end{align}
\end{subequations}
We strategically pair up from \eqref{est 5a} 
\begin{equation}\label{est 5} 
\RomanI_{1} + \RomanI_{3} \overset{\eqref{est 5a}}{=} 2 \sum_{k,l=1}^{n} \int_{\mathbb{R}^{n}}  \partial_{k} b_{3} (\partial_{k} j_{2} \partial_{l}^{2} j_{1} - \partial_{k} j_{1} \partial_{l}^{2} j_{2})dx 
\overset{\eqref{est 4}}{=} \sum_{l=1}^{8} \RomanI_{1,3,l}
\end{equation}
where 
\begin{subequations}\label{est 6} 
\begin{align}
\RomanI_{1,3,1} \triangleq& -2\sum_{k,l=1}^{n}\int_{\mathbb{R}^{n}}  \partial_{k} b_{3}\partial_{k}\partial_{1}b_{3}\partial_{l}^{2}\partial_{2}b_{3}dx, \hspace{1mm}  \RomanI_{1,3,2} \triangleq 2\sum_{k,l=1}^{n}\int_{\mathbb{R}^{n}}  \partial_{k} b_{3}\partial_{k}\partial_{1}b_{3}\partial_{l}^{2}\partial_{3}b_{2}dx, \\
\RomanI_{1,3,3} \triangleq& 2\sum_{k,l=1}^{n}\int_{\mathbb{R}^{n}}  \partial_{k} b_{3}\partial_{k}\partial_{3}b_{1}\partial_{l}^{2}\partial_{2}b_{3}dx, \hspace{5mm}  \RomanI_{1,3,4} \triangleq -2\sum_{k,l=1}^{n}\int_{\mathbb{R}^{n}}  \partial_{k} b_{3}\partial_{k}\partial_{3}b_{1}\partial_{l}^{2}\partial_{3}b_{2}dx, \\
\RomanI_{1,3,5} \triangleq& 2\sum_{k,l=1}^{n}\int_{\mathbb{R}^{n}}  \partial_{k} b_{3}\partial_{k}\partial_{2}b_{3}\partial_{l}^{2}\partial_{1}b_{3}dx, \hspace{5mm}  \RomanI_{1,3,6} \triangleq -2\sum_{k,l=1}^{n}\int_{\mathbb{R}^{n}}  \partial_{k} b_{3}\partial_{k}\partial_{2}b_{3}\partial_{l}^{2}\partial_{3}b_{1}dx, \\
\RomanI_{1,3,7} \triangleq& -2\sum_{k,l=1}^{n}\int_{\mathbb{R}^{n}}  \partial_{k} b_{3}\partial_{k}\partial_{3}b_{2}\partial_{l}^{2}\partial_{1}b_{3}dx, \hspace{1mm} \RomanI_{1,3,8} \triangleq 2\sum_{k,l=1}^{n}\int_{\mathbb{R}^{n}}  \partial_{k} b_{3}\partial_{k}\partial_{3}b_{2}\partial_{l}^{2}\partial_{3}b_{1}dx.
\end{align}
\end{subequations} 
Within \eqref{est 6}, we see a cancellation from $\RomanI_{1,3,1} + \RomanI_{1,3,5}$: 
\begin{align}
\RomanI_{1,3,1} + \RomanI_{1,3,5} \overset{\eqref{est 6}}{=}& 2 \sum_{k,l=1}^{n} \int_{\mathbb{R}^{n}}  \partial_{k} b_{3} (-\partial_{k}\partial_{1} b_{3}\partial_{l}^{2} \partial_{2} b_{3} + \partial_{k}\partial_{2}b_{3}\partial_{l}^{2}\partial_{1}b_{3})dx \nonumber \\
=& -2 \sum_{k,l=1}^{n}\int_{\mathbb{R}^{n}}   \frac{1}{2}\partial_{1} (\partial_{k}b_{3})^{2}\partial_{l}^{2}\partial_{2} b_{3} - \frac{1}{2}\partial_{2} (\partial_{k} b_{3})^{2} \partial_{l}^{2}\partial_{1}b_{3}dx \nonumber \\
=& \sum_{k,l=1}^{n}\int_{\mathbb{R}^{n}}   (\partial_{k} b_{3})^{2} \partial_{l}^{2}\partial_{1}\partial_{2} b_{3} - (\partial_{k}b_{3})^{2} \partial_{l}^{2} \partial_{2}\partial_{1} b_{3}dx = 0. \label{est 34}
\end{align}
Similarly, we pair up from \eqref{est 5a}
\begin{equation}\label{est 7} 
\RomanI_{2} + \RomanI_{5} \overset{\eqref{est 5a}}{=} 2 \sum_{k,l=1}^{n} \int_{\mathbb{R}^{n}}  \partial_{k} b_{2} (-\partial_{k} j_{3} \partial_{l}^{2} j_{1} + \partial_{k} j_{1} \partial_{l}^{2} j_{3}) dx
\overset{\eqref{est 4}}{=} \sum_{l=1}^{8} I_{2,5,l}
\end{equation}
where 
\begin{subequations}\label{est 8} 
\begin{align}
\RomanI_{2,5,1} \triangleq& -2\sum_{k,l=1}^{n}\int_{\mathbb{R}^{n}}  \partial_{k} b_{2}\partial_{k}\partial_{1}b_{2}\partial_{l}^{2}\partial_{2}b_{3}dx , \hspace{1mm}  \RomanI_{2,5,2} \triangleq 2\sum_{k,l=1}^{n}\int_{\mathbb{R}^{n}}  \partial_{k} b_{2}\partial_{k}\partial_{1}b_{2}\partial_{l}^{2}\partial_{3}b_{2}dx, \\
\RomanI_{2,5,3} \triangleq& 2\sum_{k,l=1}^{n}\int_{\mathbb{R}^{n}}  \partial_{k} b_{2}\partial_{k}\partial_{2}b_{1}\partial_{l}^{2}\partial_{2}b_{3}dx, \hspace{5mm}  \RomanI_{2,5,4} \triangleq -2\sum_{k,l=1}^{n}\int_{\mathbb{R}^{n}}  \partial_{k} b_{2}\partial_{k}\partial_{2}b_{1}\partial_{l}^{2}\partial_{3}b_{2}dx, \\
\RomanI_{2,5,5} \triangleq& 2\sum_{k,l=1}^{n}\int_{\mathbb{R}^{n}}  \partial_{k} b_{2}\partial_{k}\partial_{2}b_{3}\partial_{l}^{2}\partial_{1}b_{2}dx, \hspace{5mm}  \RomanI_{2,5,6} \triangleq -2\sum_{k,l=1}^{n}\int_{\mathbb{R}^{n}}  \partial_{k} b_{2}\partial_{k}\partial_{2}b_{3}\partial_{l}^{2}\partial_{2}b_{1}dx, \\
\RomanI_{2,5,7} \triangleq& -2\sum_{k,l=1}^{n}\int_{\mathbb{R}^{n}}  \partial_{k} b_{2}\partial_{k}\partial_{3}b_{2}\partial_{l}^{2}\partial_{1}b_{2}dx, \hspace{1mm} \RomanI_{2,5,8} \triangleq 2\sum_{k,l=1}^{n}\int_{\mathbb{R}^{n}}  \partial_{k} b_{2}\partial_{k}\partial_{3}b_{2}\partial_{l}^{2}\partial_{2}b_{1}dx.
\end{align}
\end{subequations} 
The cancellation within \eqref{est 8} is $\RomanI_{2,5,2} + \RomanI_{2,5,7}$: 
\begin{align}
\RomanI_{2,5,2} + \RomanI_{2,5,7} \overset{\eqref{est 8}}{=}& 2 \sum_{k,l=1}^{n} \int_{\mathbb{R}^{n}}  \partial_{k} b_{2} (\partial_{k}\partial_{1} b_{2}\partial_{l}^{2} \partial_{3} b_{2} - \partial_{k}\partial_{3}b_{2}\partial_{l}^{2}\partial_{1}b_{2})dx \nonumber \\
=& 2 \sum_{k,l=1}^{n}\int_{\mathbb{R}^{n}}   \frac{1}{2}\partial_{1} (\partial_{k}b_{2})^{2}\partial_{l}^{2}\partial_{3} b_{2} - \frac{1}{2}\partial_{3} (\partial_{k} b_{2})^{2} \partial_{l}^{2}\partial_{1}b_{2} dx \nonumber \\
=&- \sum_{k,l=1}^{n}\int_{\mathbb{R}^{n}}   (\partial_{k} b_{2})^{2} \partial_{l}^{2}\partial_{1}\partial_{3} b_{2} - (\partial_{k}b_{2})^{2} \partial_{l}^{2} \partial_{3}\partial_{1} b_{2} dx = 0. \label{est 35}
\end{align}
Finally, we pair up from \eqref{est 5a} 
\begin{equation}\label{est 9} 
\RomanI_{4} + \RomanI_{6} \overset{\eqref{est 5a}}{=} 2 \sum_{k,l=1}^{n} \int_{\mathbb{R}^{n}}  \partial_{k} b_{1} (\partial_{k} j_{3} \partial_{l}^{2} j_{2} - \partial_{k} j_{2} \partial_{l}^{2} j_{3}) dx 
\overset{\eqref{est 4}}{=} \sum_{l=1}^{8} I_{4,6,l}
\end{equation}
where 
\begin{subequations}\label{est 10} 
\begin{align}
\RomanI_{4,6,1} \triangleq& -2\sum_{k,l=1}^{n}\int_{\mathbb{R}^{n}}  \partial_{k} b_{1}\partial_{k}\partial_{1}b_{2}\partial_{l}^{2}\partial_{1}b_{3} dx, \hspace{1mm} \RomanI_{4,6,2} \triangleq 2\sum_{k,l=1}^{n}\int_{\mathbb{R}^{n}}  \partial_{k} b_{1}\partial_{k}\partial_{1}b_{2}\partial_{l}^{2}\partial_{3}b_{1} dx, \\
\RomanI_{4,6,3} \triangleq& 2\sum_{k,l=1}^{n}\int_{\mathbb{R}^{n}}  \partial_{k} b_{1}\partial_{k}\partial_{2}b_{1}\partial_{l}^{2}\partial_{1}b_{3} dx, \hspace{5mm} \RomanI_{4,6,4} \triangleq -2\sum_{k,l=1}^{n}\int_{\mathbb{R}^{n}}  \partial_{k} b_{1}\partial_{k}\partial_{2}b_{1}\partial_{l}^{2}\partial_{3}b_{1}dx, \\
\RomanI_{4,6,5} \triangleq& 2\sum_{k,l=1}^{n}\int_{\mathbb{R}^{n}}  \partial_{k} b_{1}\partial_{k}\partial_{1}b_{3}\partial_{l}^{2}\partial_{1}b_{2}dx, \hspace{5mm}  \RomanI_{4,6,6} \triangleq -2\sum_{k,l=1}^{n}\int_{\mathbb{R}^{n}}  \partial_{k} b_{1}\partial_{k}\partial_{1}b_{3}\partial_{l}^{2}\partial_{2}b_{1}dx, \\
\RomanI_{4,6,7} \triangleq& -2\sum_{k,l=1}^{n}\int_{\mathbb{R}^{n}}  \partial_{k} b_{1}\partial_{k}\partial_{3}b_{1}\partial_{l}^{2}\partial_{1}b_{2}dx, \hspace{1mm} \RomanI_{4,6,8} \triangleq 2\sum_{k,l=1}^{n}\int_{\mathbb{R}^{n}}  \partial_{k} b_{1}\partial_{k}\partial_{3}b_{1}\partial_{l}^{2}\partial_{2}b_{1}dx.
\end{align}
\end{subequations} 
The cancellation within \eqref{est 10} is $\RomanI_{4,6,4} + \RomanI_{4,6,8}$: 
\begin{align}
\RomanI_{4,6,4} + \RomanI_{4,6,8} \overset{\eqref{est 10}}{=}& 2 \sum_{k,l=1}^{n} \int_{\mathbb{R}^{n}}  \partial_{k} b_{1} (-\partial_{k}\partial_{2} b_{1}\partial_{l}^{2} \partial_{3} b_{1} + \partial_{k}\partial_{3}b_{1}\partial_{l}^{2}\partial_{2}b_{1}) dx \nonumber \\
=& -2 \sum_{k,l=1}^{n}\int_{\mathbb{R}^{n}}   \frac{1}{2}\partial_{2} (\partial_{k}b_{1})^{2}\partial_{l}^{2}\partial_{3} b_{1} - \frac{1}{2}\partial_{3} (\partial_{k} b_{1})^{2} \partial_{l}^{2}\partial_{2}b_{1} dx \nonumber \\
=& \sum_{k,l=1}^{n}\int_{\mathbb{R}^{n}}   (\partial_{k} b_{1})^{2} \partial_{l}^{2}\partial_{2}\partial_{3} b_{1} - (\partial_{k}b_{1})^{2} \partial_{l}^{2} \partial_{2}\partial_{3} b_{1}dx = 0. \label{est 36}
\end{align}
Next, we work on $\RomanII$ from \eqref{est 11} in which the cancellations are less obvious. First, we compute 
\begin{equation}\label{est 30} 
\RomanII \overset{\eqref{est 11}}{=} \sum_{k,l=1}^{n} \int_{\mathbb{R}^{n}}  (j\times \partial_{k}^{2} b) \cdot \partial_{l}^{2} j  dx= \sum_{i=1}^{6} \RomanII_{i} 
\end{equation} 
where 
\begin{subequations}\label{est 12} 
\begin{align}
\RomanII_{1} \triangleq& \sum_{k,l=1}^{n} \int_{\mathbb{R}^{n}}  j_{2}\partial_{k}^{2} b_{3}\partial_{l}^{2} j_{1}dx, \hspace{12mm}\RomanII_{2} \triangleq - \sum_{k,l=1}^{n} \int_{\mathbb{R}^{n}}  j_{3}\partial_{k}^{2} b_{2}\partial_{l}^{2} j_{1}dx, \\
\RomanII_{3} \triangleq& - \sum_{k,l=1}^{n} \int_{\mathbb{R}^{n}}  j_{1} \partial_{k}^{2} b_{3}\partial_{l}^{2} j_{2}dx, \hspace{9mm} \RomanII_{4} \triangleq \sum_{k,l=1}^{n}\int_{\mathbb{R}^{n}}  j_{3} \partial_{k}^{2} b_{1} \partial_{l}^{2} j_{2}dx, \\
\RomanII_{5} \triangleq& \sum_{k,l=1}^{n} \int_{\mathbb{R}^{n}}  j_{1} \partial_{k}^{2} b_{2} \partial_{l}^{2} j_{3}dx, \hspace{12mm}\RomanII_{6} \triangleq - \sum_{k,l=1}^{n}\int_{\mathbb{R}^{n}}  j_{2} \partial_{k}^{2} b_{1}\partial_{l}^{2} j_{3}dx. 
\end{align}
\end{subequations}
Now we strategically couple from \eqref{est 12}
\begin{equation}\label{est 13} 
\RomanII_{1} + \RomanII_{3} \overset{\eqref{est 12}}{=} \sum_{k,l=1}^{n} \int_{\mathbb{R}^{n}}  j_{2}\partial_{k}^{2} b_{3} \partial_{l}^{2} j_{1} - j_{1} \partial_{k}^{2} b_{3}\partial_{l}^{2} j_{2} dx
\end{equation}
where we integrate by parts separately to obtain 
\begin{align*}
& \int_{\mathbb{R}^{n}}  j_{2}\partial_{k}^{2} b_{3}\partial_{l}^{2} j_{1} dx= - \int_{\mathbb{R}^{n}}  \partial_{l} j_{2}\partial_{k}^{2} b_{3}\partial_{l} j_{1} + j_{2}\partial_{k}^{2}\partial_{l} b_{3}\partial_{l} j_{1}dx, \\
& -\int_{\mathbb{R}^{n}}  j_{1} \partial_{k}^{2} b_{3}\partial_{l}^{2} j_{2} dx= \int_{\mathbb{R}^{n}}  \partial_{l} j_{1}\partial_{k}^{2} b_{3} \partial_{l} j_{2} + j_{1}\partial_{k}^{2}\partial_{l} b_{3}\partial_{l} j_{2}dx, 
\end{align*}
so that we see a cancellation in sum, specifically $-\int_{\mathbb{R}^{n}}  \partial_{l} j_{2} \partial_{k}^{2} b_{3}\partial_{l} j_{1}dx$ and $\int_{\mathbb{R}^{n}}  \partial_{l} j_{1}\partial_{k}^{2} b_{3}\partial_{l} j_{2}dx$, leading us to 
\begin{equation}\label{est 31}
\RomanII_{1} + \RomanII_{3} \overset{\eqref{est 13}}{=}  -\sum_{k,l=1}^{n}\int_{\mathbb{R}^{n}}  \partial_{k}^{2}\partial_{l} b_{3} (j_{2}\partial_{l} j_{1} - j_{1} \partial_{l} j_{2}) dx
\overset{\eqref{est 4}}{=}\sum_{l=1}^{8} \RomanII_{1,3,l}
\end{equation}
where 
\begin{subequations}\label{est 14} 
\begin{align}
\RomanII_{1,3,1} \triangleq& \sum_{k,l=1}^{n} \int_{\mathbb{R}^{n}}  \partial_{k}^{2}\partial_{l} b_{3}\partial_{1}b_{3}\partial_{l}\partial_{2} b_{3}dx, \hspace{7mm}\RomanII_{1,3,2} \triangleq -\sum_{k,l=1}^{n} \int_{\mathbb{R}^{n}}  \partial_{k}^{2}\partial_{l} b_{3}\partial_{1}b_{3}\partial_{l}\partial_{3} b_{2}dx, \\
\RomanII_{1,3,3} \triangleq& -\sum_{k,l=1}^{n} \int_{\mathbb{R}^{n}}  \partial_{k}^{2}\partial_{l} b_{3}\partial_{3}b_{1}\partial_{l}\partial_{2} b_{3}dx, \hspace{4mm}\RomanII_{1,3,4} \triangleq \sum_{k,l=1}^{n} \int_{\mathbb{R}^{n}}  \partial_{k}^{2}\partial_{l} b_{3}\partial_{3}b_{1}\partial_{l}\partial_{3} b_{2}dx, \\
\RomanII_{1,3,5} \triangleq& -\sum_{k,l=1}^{n} \int_{\mathbb{R}^{n}}  \partial_{k}^{2}\partial_{l} b_{3}\partial_{2}b_{3}\partial_{l}\partial_{1} b_{3}dx, \hspace{4mm}\RomanII_{1,3,6} \triangleq \sum_{k,l=1}^{n} \int_{\mathbb{R}^{n}}  \partial_{k}^{2}\partial_{l} b_{3}\partial_{2}b_{3}\partial_{l}\partial_{3} b_{1}dx, \\
\RomanII_{1,3,7} \triangleq& \sum_{k,l=1}^{n} \int_{\mathbb{R}^{n}}  \partial_{k}^{2}\partial_{l} b_{3}\partial_{3}b_{2}\partial_{l}\partial_{1} b_{3}dx, \hspace{7mm}\RomanII_{1,3,8} \triangleq -\sum_{k,l=1}^{n} \int_{\mathbb{R}^{n}}  \partial_{k}^{2}\partial_{l} b_{3}\partial_{3}b_{2}\partial_{l}\partial_{3} b_{1}dx.
\end{align}
\end{subequations}
We couple $\RomanII_{1,3,1}$ and $\RomanII_{1,3,5}$ from \eqref{est 14} to obtain 
\begin{align}
\RomanII_{1,3,1} + \RomanII_{1,3,5} \overset{\eqref{est 14}}{=}& \sum_{k,l=1}^{n} \int_{\mathbb{R}^{n}}  \partial_{k}^{2} \partial_{l} b_{3} \partial_{1} b_{3} \partial_{l} \partial_{2} b_{3} - \partial_{k}^{2}\partial_{l} b_{3} \partial_{2} b_{3}\partial_{l} \partial_{1} b_{3}dx \label{est 16} 
\end{align}
and integrate by parts separately to obtain 
\begin{subequations}\label{est 15}
\begin{align}
&\sum_{k,l=1}^{n} \int_{\mathbb{R}^{n}}  \partial_{k}^{2} \partial_{l} b_{3} \partial_{1} b_{3}\partial_{l} \partial_{2} b_{3} dx\nonumber\\
=& -\sum_{k,l=1}^{n}  \int_{\mathbb{R}^{n}}  \partial_{k} \partial_{l} b_{3}\partial_{k}\partial_{1} b_{3}\partial_{l} \partial_{2} b_{3} + \partial_{k}\partial_{l} b_{3}\partial_{1} b_{3} \partial_{k}\partial_{l}\partial_{2} b_{3}dx, \label{est 15a} \\
& -\sum_{k,l=1}^{n} \int_{\mathbb{R}^{n}}  \partial_{k}^{2}\partial_{l} b_{3}\partial_{2}b_{3}\partial_{l}\partial_{1} b_{3} dx\nonumber \\
=& \sum_{k,l=1}^{n} \int_{\mathbb{R}^{n}}  \partial_{k}\partial_{l} b_{3}\partial_{k}\partial_{2} b_{3}\partial_{l}\partial_{1} b_{3} + \partial_{k}\partial_{l} b_{3} \partial_{2} b_{3} \partial_{k} \partial_{l} \partial_{1} b_{3}dx.\label{est 15b}
\end{align}
\end{subequations} 
Then the first terms in \eqref{est 15a}-\eqref{est 15b} together cancel out as follows:
\begin{equation*}
- \sum_{k,l=1}^{n} \int_{\mathbb{R}^{n}}  \partial_{k} \partial_{l} b_{3}\partial_{k}\partial_{1}b_{3}\partial_{l}\partial_{2}b_{3} - \partial_{k}\partial_{l}b_{3}\partial_{k}\partial_{2}b_{3}\partial_{l}\partial_{1}b_{3} dx= 0
\end{equation*}
which can be seen by just swapping $k\leftrightarrow l$ in the second integrand. On the other hand, the second terms in \eqref{est 15a}-\eqref{est 15b} also cancel out as 
\begin{align*}
&- \sum_{k,l=1}^{n} \int_{\mathbb{R}^{n}}  \partial_{k} \partial_{l} b_{3}\partial_{1}b_{3}\partial_{k}\partial_{l} \partial_{2} b_{3} - \partial_{k}\partial_{l} b_{3}\partial_{2}b_{3}\partial_{k}\partial_{l}\partial_{1} b_{3} dx \nonumber \\
=&- \sum_{k,l=1}^{n}\int_{\mathbb{R}^{n}}  \frac{1}{2}\partial_{2} (\partial_{k}\partial_{l} b_{3})^{2}\partial_{1}b_{3} - \frac{1}{2} \partial_{1} (\partial_{k}\partial_{l} b_{3})^{2}\partial_{2} b_{3}dx \nonumber \\
=&  \frac{1}{2}\sum_{k,l=1}^{n}\int_{\mathbb{R}^{n}}  (\partial_{k}\partial_{l} b_{3})^{2} \partial_{1}\partial_{2} b_{3} - (\partial_{k}\partial_{l} b_{3})^{2}\partial_{1}\partial_{2} b_{3} dx= 0. 
\end{align*}
Therefore, we conclude from \eqref{est 16} that 
\begin{equation}\label{est 25}
\RomanII_{1,3,1} + \RomanII_{1,3,5} = 0. 
\end{equation}
Similarly, we couple from \eqref{est 12}  
\begin{equation}\label{est 17} 
\RomanII_{2} + \RomanII_{5} \overset{\eqref{est 12}}{=} -\sum_{k,l=1}^{n} \int_{\mathbb{R}^{n}}  j_{3}\partial_{k}^{2} b_{2} \partial_{l}^{2} j_{1} - j_{1} \partial_{k}^{2} b_{2}\partial_{l}^{2} j_{3} dx
\end{equation}
where we integrate by parts separately to obtain 
\begin{align*}
&- \int_{\mathbb{R}^{n}}  j_{3}\partial_{k}^{2} b_{2}\partial_{l}^{2} j_{1} dx=  \int_{\mathbb{R}^{n}}  \partial_{l} j_{3}\partial_{k}^{2} b_{2}\partial_{l} j_{1} + j_{3}\partial_{k}^{2}\partial_{l} b_{2}\partial_{l} j_{1}dx, \\
& \int_{\mathbb{R}^{n}}  j_{1} \partial_{k}^{2} b_{2}\partial_{l}^{2} j_{3} dx= -\int_{\mathbb{R}^{n}}  \partial_{l} j_{1}\partial_{k}^{2} b_{2} \partial_{l} j_{3} + j_{1}\partial_{k}^{2}\partial_{l} b_{2}\partial_{l} j_{3}dx, 
\end{align*}
so that we see a cancellation in sum, specifically $\int_{\mathbb{R}^{n}}  \partial_{l} j_{3}\partial_{k}^{2} b_{2}\partial_{l} j_{1}dx$ and $- \int_{\mathbb{R}^{n}}  \partial_{l} j_{1}\partial_{k}^{2} b_{2}\partial_{l} j_{3}dx$, leading us to 
\begin{equation}\label{est 32}
\RomanII_{2} + \RomanII_{5} \overset{\eqref{est 17}}{=}  \sum_{k,l=1}^{n}\int_{\mathbb{R}^{n}}  \partial_{k}^{2}\partial_{l} b_{2} (j_{3}\partial_{l} j_{1} - j_{1} \partial_{l} j_{3}) dx
\overset{\eqref{est 4}}{=}\sum_{l=1}^{8} \RomanII_{2,5,l}
\end{equation}
where 
\begin{subequations}\label{est 18} 
\begin{align}
\RomanII_{2,5,1} \triangleq& \sum_{k,l=1}^{n} \int_{\mathbb{R}^{n}}  \partial_{k}^{2}\partial_{l} b_{2}\partial_{1}b_{2}\partial_{l}\partial_{2} b_{3}dx, \hspace{8mm} \RomanII_{2,5,2} \triangleq -\sum_{k,l=1}^{n} \int_{\mathbb{R}^{n}}  \partial_{k}^{2}\partial_{l} b_{2}\partial_{1}b_{2}\partial_{l}\partial_{3} b_{2}dx, \\
\RomanII_{2,5,3} \triangleq& -\sum_{k,l=1}^{n} \int_{\mathbb{R}^{n}}  \partial_{k}^{2}\partial_{l} b_{2}\partial_{2}b_{1}\partial_{l}\partial_{2} b_{3}dx, \hspace{5mm}\RomanII_{2,5,4} \triangleq \sum_{k,l=1}^{n} \int_{\mathbb{R}^{n}}  \partial_{k}^{2}\partial_{l} b_{2}\partial_{2}b_{1}\partial_{l}\partial_{3} b_{2}dx, \\
\RomanII_{2,5,5} \triangleq& -\sum_{k,l=1}^{n} \int_{\mathbb{R}^{n}}  \partial_{k}^{2}\partial_{l} b_{2}\partial_{2}b_{3}\partial_{l}\partial_{1} b_{2}dx, \hspace{5mm}\RomanII_{2,5,6} \triangleq \sum_{k,l=1}^{n} \int_{\mathbb{R}^{n}}  \partial_{k}^{2}\partial_{l} b_{2}\partial_{2}b_{3}\partial_{l}\partial_{2} b_{1}dx, \\
\RomanII_{2,5,7} \triangleq& \sum_{k,l=1}^{n} \int_{\mathbb{R}^{n}}  \partial_{k}^{2}\partial_{l} b_{2}\partial_{3}b_{2}\partial_{l}\partial_{1} b_{2}dx, \hspace{8mm} \RomanII_{2,5,8} \triangleq -\sum_{k,l=1}^{n} \int_{\mathbb{R}^{n}}  \partial_{k}^{2}\partial_{l} b_{2}\partial_{3}b_{2}\partial_{l}\partial_{2} b_{1}dx.
\end{align}
\end{subequations}
We couple $\RomanII_{2,5,2}$ and $\RomanII_{2,5,7}$ from \eqref{est 18} to obtain 
\begin{align}
\RomanII_{2,5,2} + \RomanII_{2,5,7} \overset{\eqref{est 18}}{=}& -\sum_{k,l=1}^{n} \int_{\mathbb{R}^{n}}  \partial_{k}^{2} \partial_{l} b_{2} \partial_{1} b_{2} \partial_{l} \partial_{3} b_{2} - \partial_{k}^{2}\partial_{l} b_{2} \partial_{3} b_{2}\partial_{l} \partial_{1} b_{2} dx \label{est 20} 
\end{align}
and integrate by parts separately to obtain 
\begin{subequations}\label{est 19}
\begin{align}
&-\sum_{k,l=1}^{n} \int_{\mathbb{R}^{n}}  \partial_{k}^{2} \partial_{l} b_{2} \partial_{1} b_{2}\partial_{l} \partial_{3} b_{2} dx= \sum_{k,l=1}^{n}  \int_{\mathbb{R}^{n}}  \partial_{k} \partial_{l} b_{2}\partial_{k}\partial_{1} b_{2}\partial_{l} \partial_{3} b_{2} + \partial_{k}\partial_{l} b_{2}\partial_{1} b_{2} \partial_{k}\partial_{l}\partial_{3} b_{2}dx, \label{est 19a} \\
& \sum_{k,l=1}^{n} \int_{\mathbb{R}^{n}}  \partial_{k}^{2}\partial_{l} b_{2}\partial_{3}b_{2}\partial_{l}\partial_{1} b_{2} dx= -\sum_{k,l=1}^{n} \int_{\mathbb{R}^{n}}  \partial_{k}\partial_{l} b_{2}\partial_{k}\partial_{3} b_{2}\partial_{l}\partial_{1} b_{2} + \partial_{k}\partial_{l} b_{2} \partial_{3} b_{2} \partial_{k} \partial_{l} \partial_{1} b_{2}dx.\label{est 19b}
\end{align}
\end{subequations} 
Then the first terms in \eqref{est 19a}-\eqref{est 19b} together cancel out as follows:
\begin{equation*}
\sum_{k,l=1}^{n} \int_{\mathbb{R}^{n}}  \partial_{k} \partial_{l} b_{2}\partial_{k}\partial_{1}b_{2}\partial_{l}\partial_{3}b_{2} - \partial_{k}\partial_{l}b_{2}\partial_{k}\partial_{3}b_{2}\partial_{l}\partial_{1}b_{2} dx= 0
\end{equation*}
which can be seen by just swapping $k\leftrightarrow l$ in the second integrand. On the other hand, the second terms in \eqref{est 19a}-\eqref{est 19b} also cancel out as 
\begin{align*}
& \sum_{k,l=1}^{n} \int_{\mathbb{R}^{n}}  \partial_{k} \partial_{l} b_{2}\partial_{1}b_{2}\partial_{k}\partial_{l} \partial_{3} b_{2} - \partial_{k}\partial_{l} b_{2}\partial_{3}b_{2}\partial_{k}\partial_{l}\partial_{1} b_{2} dx \nonumber \\
=& \sum_{k,l=1}^{n}\int_{\mathbb{R}^{n}}  \frac{1}{2}\partial_{3} (\partial_{k}\partial_{l} b_{2})^{2}\partial_{1}b_{2} - \frac{1}{2} \partial_{1} (\partial_{k}\partial_{l} b_{2})^{2}\partial_{3} b_{2}dx \nonumber \\
=& - \frac{1}{2}\sum_{k,l=1}^{n}\int_{\mathbb{R}^{n}}  (\partial_{k}\partial_{l} b_{2})^{2} \partial_{1}\partial_{3} b_{2} - (\partial_{k}\partial_{l} b_{2})^{2}\partial_{1}\partial_{3} b_{2}dx = 0. 
\end{align*}
Therefore, we conclude from \eqref{est 20} that 
\begin{equation}\label{est 26}
\RomanII_{2,5,2} + \RomanII_{2,5,7} = 0. 
\end{equation}
Finally, we couple from \eqref{est 12} 
\begin{equation}\label{est 21} 
\RomanII_{4} + \RomanII_{6} \overset{\eqref{est 12}}{=} \sum_{k,l=1}^{n} \int_{\mathbb{R}^{n}}  j_{3}\partial_{k}^{2} b_{1} \partial_{l}^{2} j_{2} - j_{2} \partial_{k}^{2} b_{1}\partial_{l}^{2} j_{3} dx
\end{equation}
where we integrate by parts separately to obtain 
\begin{align*}
& \int_{\mathbb{R}^{n}}  j_{3}\partial_{k}^{2} b_{1}\partial_{l}^{2} j_{2} dx=  -\int_{\mathbb{R}^{n}}  \partial_{l} j_{3}\partial_{k}^{2} b_{1}\partial_{l} j_{2} + j_{3}\partial_{k}^{2}\partial_{l} b_{1}\partial_{l} j_{2}dx, \\
& -\int_{\mathbb{R}^{n}}  j_{2} \partial_{k}^{2} b_{1}\partial_{l}^{2} j_{3}dx = \int_{\mathbb{R}^{n}}  \partial_{l} j_{2}\partial_{k}^{2} b_{1} \partial_{l} j_{3} + j_{2}\partial_{k}^{2}\partial_{l} b_{1}\partial_{l} j_{3}dx, 
\end{align*}
so that we see a cancellation in sum, specifically $- \int_{\mathbb{R}^{n}}  \partial_{l} j_{3}\partial_{k}^{2} b_{1}\partial_{l} j_{2}dx$ and $\int_{\mathbb{R}^{n}}  \partial_{l} j_{2}\partial_{k}^{2} b_{1}\partial_{l} j_{3}dx$, leading us to 
\begin{equation}\label{est 33}
\RomanII_{4} + \RomanII_{6} \overset{\eqref{est 21}}{=}  \sum_{k,l=1}^{n}\int_{\mathbb{R}^{n}}  \partial_{k}^{2}\partial_{l} b_{1} (-j_{3}\partial_{l} j_{2} +  j_{2} \partial_{l} j_{3}) dx
\overset{\eqref{est 4}}{=}\sum_{l=1}^{8} \RomanII_{4,6,l}
\end{equation}
where 
\begin{subequations}\label{est 22} 
\begin{align}
\RomanII_{4,6,1} \triangleq& \sum_{k,l=1}^{n} \int_{\mathbb{R}^{n}}  \partial_{k}^{2}\partial_{l} b_{1}\partial_{1}b_{2}\partial_{l}\partial_{1} b_{3}dx, \hspace{7mm} \RomanII_{4,6,2} \triangleq -\sum_{k,l=1}^{n} \int_{\mathbb{R}^{n}}  \partial_{k}^{2}\partial_{l} b_{1}\partial_{1}b_{2}\partial_{l}\partial_{3} b_{1}dx, \\
\RomanII_{4,6,3} \triangleq& -\sum_{k,l=1}^{n} \int_{\mathbb{R}^{n}}  \partial_{k}^{2}\partial_{l} b_{1}\partial_{2}b_{1}\partial_{l}\partial_{1} b_{3}dx, \hspace{4mm} \RomanII_{4,6,4} \triangleq \sum_{k,l=1}^{n} \int_{\mathbb{R}^{n}}  \partial_{k}^{2}\partial_{l} b_{1}\partial_{2}b_{1}\partial_{l}\partial_{3} b_{1}dx, \\
\RomanII_{4,6,5} \triangleq& -\sum_{k,l=1}^{n} \int_{\mathbb{R}^{n}}  \partial_{k}^{2}\partial_{l} b_{1}\partial_{1}b_{3}\partial_{l}\partial_{1} b_{2}dx, \hspace{4mm}\RomanII_{4,6,6} \triangleq \sum_{k,l=1}^{n} \int_{\mathbb{R}^{n}}  \partial_{k}^{2}\partial_{l} b_{1}\partial_{1}b_{3}\partial_{l}\partial_{2} b_{1}dx, \\
\RomanII_{4,6,7} \triangleq& \sum_{k,l=1}^{n} \int_{\mathbb{R}^{n}}  \partial_{k}^{2}\partial_{l} b_{1}\partial_{3}b_{1}\partial_{l}\partial_{1} b_{2}dx, \hspace{7mm} \RomanII_{4,6,8} \triangleq -\sum_{k,l=1}^{n} \int_{\mathbb{R}^{n}}  \partial_{k}^{2}\partial_{l} b_{1}\partial_{3}b_{1}\partial_{l}\partial_{2} b_{1}dx.
\end{align}
\end{subequations} 
We couple $\RomanII_{4,6,4}$ and $\RomanII_{4,6,8}$ from \eqref{est 22} to obtain 
\begin{align}  
\RomanII_{4,6,4} + \RomanII_{4,6,8} \overset{\eqref{est 22}}{=}& \sum_{k,l=1}^{n} \int_{\mathbb{R}^{n}}  \partial_{k}^{2} \partial_{l} b_{1} \partial_{2} b_{1} \partial_{l} \partial_{3} b_{1} - \partial_{k}^{2}\partial_{l} b_{1} \partial_{3} b_{1}\partial_{l} \partial_{2} b_{1}dx \label{est 24} 
\end{align}
and integrate by parts separately to obtain 
\begin{subequations}\label{est 23}
\begin{align}
&\sum_{k,l=1}^{n} \int_{\mathbb{R}^{n}}  \partial_{k}^{2} \partial_{l} b_{1} \partial_{2} b_{1}\partial_{l} \partial_{3} b_{1}dx \nonumber\\
=& -\sum_{k,l=1}^{n}  \int_{\mathbb{R}^{n}}  \partial_{k} \partial_{l} b_{1}\partial_{k}\partial_{2} b_{1}\partial_{l} \partial_{3} b_{1} + \partial_{k}\partial_{l} b_{1}\partial_{2} b_{1} \partial_{k}\partial_{l}\partial_{3} b_{1}dx, \label{est 23a} \\
&- \sum_{k,l=1}^{n} \int_{\mathbb{R}^{n}}  \partial_{k}^{2}\partial_{l} b_{1}\partial_{3}b_{1}\partial_{l}\partial_{2} b_{1}dx \nonumber\\
=& \sum_{k,l=1}^{n} \int_{\mathbb{R}^{n}}  \partial_{k}\partial_{l} b_{1}\partial_{k}\partial_{3} b_{1}\partial_{l}\partial_{2} b_{1} + \partial_{k}\partial_{l} b_{1} \partial_{3} b_{1} \partial_{k} \partial_{l} \partial_{2} b_{1}dx.\label{est 23b}
\end{align}
\end{subequations} 
Then the first terms in \eqref{est 23a}-\eqref{est 23b} together cancel out as follows:
\begin{equation*}
-\sum_{k,l=1}^{n} \int_{\mathbb{R}^{n}}  \partial_{k} \partial_{l} b_{1}\partial_{k}\partial_{2}b_{1}\partial_{l}\partial_{3}b_{1} - \partial_{k}\partial_{l}b_{1}\partial_{k}\partial_{3}b_{1}\partial_{l}\partial_{2}b_{1} dx= 0
\end{equation*}
which can be seen by just swapping $k\leftrightarrow l$ in the second integrand. On the other hand, the second terms in \eqref{est 23a}-\eqref{est 23b} also cancel out as 
\begin{align*}
& -\sum_{k,l=1}^{n} \int_{\mathbb{R}^{n}}  \partial_{k} \partial_{l} b_{1}\partial_{2}b_{1}\partial_{k}\partial_{l} \partial_{3} b_{1} - \partial_{k}\partial_{l} b_{1}\partial_{3}b_{1}\partial_{k}\partial_{l}\partial_{2} b_{1}dx  \nonumber \\
=& -\sum_{k,l=1}^{n}\int_{\mathbb{R}^{n}}  \frac{1}{2}\partial_{3} (\partial_{k}\partial_{l} b_{1})^{2}\partial_{2}b_{1} - \frac{1}{2} \partial_{2} (\partial_{k}\partial_{l} b_{1})^{2}\partial_{3} b_{1}dx \nonumber \\
=& \frac{1}{2}\sum_{k,l=1}^{n}\int_{\mathbb{R}^{n}}  (\partial_{k}\partial_{l} b_{1})^{2} \partial_{2}\partial_{3} b_{1} - (\partial_{k}\partial_{l} b_{1})^{2}\partial_{2}\partial_{3} b_{1}dx = 0. 
\end{align*}
Therefore, we conclude from \eqref{est 24} that 
\begin{equation}\label{est 27}
\RomanII_{4,6,4} + \RomanI_{4,6,8} = 0. 
\end{equation}
In conclusion we have shown 
\begin{align}
&\int_{\mathbb{R}^{n}}  \Delta \nabla \times (j\times b) \cdot \Delta b dx \overset{\eqref{est 28} }{=} \RomanI + \RomanII \nonumber \\
\overset{\eqref{est 29} \eqref{est 30}}{=}& \sum_{i=1}^{6} \RomanI_{i} + \sum_{i=1}^{6} \RomanII_{i} \nonumber \\
=& (\RomanI_{1} + \RomanI_{3}) + (\RomanI_{2} + \RomanI_{5}) + (\RomanI_{4} + \RomanI_{6}) + (\RomanII_{1} + \RomanII_{3}) + (\RomanII_{2} + \RomanII_{5}) + (\RomanII_{4} + \RomanII_{6}) \nonumber \\
\overset{\eqref{est 5} \eqref{est 7} \eqref{est 9} \eqref{est 31} \eqref{est 32} \eqref{est 33}}{=}& \sum_{l=1}^{8} \RomanI_{1,3,l} + \RomanI_{2,5,l} + \RomanI_{4,6,l} + \RomanII_{1,3,l} + \RomanII_{2,5,l} + \RomanII_{4,6,l} \nonumber \\
\overset{\eqref{est 34} \eqref{est 35} \eqref{est 36} \eqref{est 25}\eqref{est 26}\eqref{est 27} }{=}& \sum_{l\in \{2,3,4,6,7,8\}} \RomanI_{1,3,l} + \sum_{l\in \{1,3,4,5,6,8 \}} \RomanI_{2,5,l} + \sum_{l \in \{1,2,3, 5,6,7 \}} \RomanI_{4,6,l} \nonumber \\
&+ \sum_{l\in \{2,3,4,6,7,8\}} \RomanII_{1,3,l} +  \sum_{l\in \{1,3,4,5,6,8 \}} \RomanII_{2,5,l} + \sum_{l \in \{1,2,3, 5,6,7 \}} \RomanII_{4,6,l}.  \label{est 61} 
\end{align}

We are now ready to conclude \eqref{est 58} in part (1) in the 3-D case. We compute 
\begin{align}
\sum_{l \in \{3,4,7,8\}} \RomanI_{1,3,l} &\overset{\eqref{est 6}}{=} 2 \sum_{k,l=1}^{3} \int_{\mathbb{R}^{3}}  \partial_{k} b_{3}\partial_{k}\partial_{3} b_{1}\partial_{l}^{2} \partial_{2} b_{3} - \partial_{k} b_{3} \partial_{k}\partial_{3} b_{1}\partial_{l}^{2}\partial_{3}b_{2} \nonumber\\
&- \partial_{k}b_{3}\partial_{k}\partial_{3}b_{2}\partial_{l}^{2}\partial_{1}b_{3}+ \partial_{k}b_{3}\partial_{k}\partial_{3}b_{2}\partial_{l}^{2}\partial_{3}b_{1} dx\lesssim \int_{\mathbb{R}^{3}}  \lvert \nabla b_{v} \rvert \lvert \nabla^{2} b_{h}\rvert \lvert \nabla^{3} b \rvert dx, \label{est 135} 
\end{align}
\begin{align}
\sum_{l\in \{2,6\}} \RomanI_{1,3,l} \overset{\eqref{est 6}}{=}& 2 \sum_{k,l=1}^{3}\int_{\mathbb{R}^{3}}  \partial_{k}b_{3}\partial_{k}\partial_{1} b_{3}\partial_{l}^{2}\partial_{3}b_{2} - \partial_{k} b_{3}\partial_{k}\partial_{2}b_{3}\partial_{l}^{2}\partial_{3}b_{1} dx\nonumber \\
=& -2 \sum_{k,l=1}^{3}\int_{\mathbb{R}^{3}}  \partial_{l} (\partial_{k} b_{3}\partial_{k}\partial_{1}b_{3}) \partial_{l}\partial_{3}b_{2} - \partial_{l} (\partial_{k}b_{3}\partial_{k}\partial_{2}b_{3}) \partial_{l}\partial_{3} b_{1} dx \nonumber\\
&\lesssim \int_{\mathbb{R}^{3}}  (\lvert \nabla^{2} b_{v} \rvert^{2} + \lvert \nabla b_{v} \rvert \lvert \nabla^{3} b_{v} \rvert ) \lvert \nabla^{2} b_{h} \rvert dx, 
\end{align}
\begin{align}
\sum_{l\in \{1,3,4,8\}} \RomanI_{2,5,l} &\overset{\eqref{est 8}}{=} -2 \sum_{k,l=1}^{3} \int_{\mathbb{R}^{3}}  \partial_{k} b_{2} \partial_{k}\partial_{1} b_{2}\partial_{l}^{2}\partial_{2} b_{3} - \partial_{k} b_{2}\partial_{k}\partial_{2} b_{1}\partial_{l}^{2}\partial_{2}b_{3} \nonumber \\
& + \partial_{k}b_{2}\partial_{k}\partial_{2}b_{1}\partial_{l}^{2}\partial_{3}b_{2} - \partial_{k}b_{2}\partial_{k}\partial_{3}b_{2}\partial_{l}^{2}\partial_{2}b_{1} dx\lesssim \int_{\mathbb{R}^{3}}  \lvert \nabla b_{h} \rvert \lvert \nabla^{2} b_{h} \rvert \lvert \nabla^{3} b \rvert dx, 
\end{align}
\begin{align}
\sum_{l\in \{5,6\}} \RomanI_{2,5,l} \overset{\eqref{est 8}}{=}&2\sum_{k,l=1}^{3} \int_{\mathbb{R}^{3}}  \partial_{k} b_{2}\partial_{k}\partial_{2} b_{3}\partial_{l}^{2}\partial_{1} b_{2} - \partial_{k}b_{2}\partial_{k}\partial_{2}b_{3}\partial_{l}^{2}\partial_{2}b_{1} dx\nonumber \\
=& - 2 \sum_{k,l=1}^{3}\int_{\mathbb{R}^{3}}  \partial_{l} (\partial_{k}b_{2}\partial_{k}\partial_{2} b_{3}) \partial_{l}\partial_{1} b_{2} - \partial_{l} (\partial_{k} b_{2} \partial_{k}\partial_{2} b_{3}) \partial_{l}\partial_{2} b_{1}dx \nonumber \\
\lesssim& \int_{\mathbb{R}^{3}}  ( \lvert \nabla^{2} b_{h} \rvert \lvert \nabla^{2} b_{v} \rvert  + \lvert \nabla b_{h}  \rvert \lvert \nabla^{3} b_{v} \rvert) \lvert \nabla^{2} b_{h} \rvert dx, 
\end{align}
\begin{align}
\sum_{l\in \{1,2,3,7\}} \RomanI_{4,6,l} &\overset{\eqref{est 10}}{=} -2 \sum_{k,l=1}^{3} \int_{\mathbb{R}^{3}}  \partial_{k} b_{1} \partial_{k} \partial_{1} b_{2} \partial_{l}^{2}\partial_{1} b_{3} - \partial_{k} b_{1}\partial_{k}\partial_{1} b_{2}\partial_{l}^{2}\partial_{3} b_{1}  \nonumber\\
& - \partial_{k} b_{1}\partial_{k}\partial_{2} b_{1}\partial_{l}^{2}\partial_{1} b_{3} + \partial_{k} b_{1}\partial_{k}\partial_{3} b_{1}\partial_{l}^{2} \partial_{1} b_{2} dx \lesssim\int_{\mathbb{R}^{3}}  \lvert \nabla b_{h} \rvert \lvert \nabla^{2} b_{h} \rvert \lvert \nabla^{3} b \rvert dx, 
\end{align}
and
\begin{align}
\sum_{l\in \{5,6\}} \RomanI_{4,6,l} \overset{\eqref{est 10}}{=}& 2 \sum_{k,l=1}^{3} \int_{\mathbb{R}^{3}}  \partial_{k} b_{1}\partial_{k}\partial_{1} b_{3}\partial_{l}^{2}\partial_{1}b_{2} - \partial_{k}b_{1}\partial_{k}\partial_{1} b_{3}\partial_{l}^{2}\partial_{2} b_{1} dx \nonumber \\
=& -2 \sum_{k,l=1}^{3}\int_{\mathbb{R}^{3}}  \partial_{l} (\partial_{k} b_{1}\partial_{k}\partial_{1}b_{3}) \partial_{l} \partial_{1} b_{2} - \partial_{l} (\partial_{k}b_{1}\partial_{k}\partial_{1}b_{3}) \partial_{l}\partial_{2} b_{1} dx \nonumber \\
\lesssim& \int_{\mathbb{R}^{3}}  (\lvert \nabla^{2} b_{h} \rvert \lvert \nabla^{2} b_{v} \rvert  + \lvert \nabla b_{h} \rvert \lvert \nabla^{3} b_{v} \rvert) \lvert \nabla^{2} b_{h} \rvert dx. 
\end{align}
The terms within $\RomanII$ are more difficult but can be manipulated as follows: 
\begin{align}
\sum_{l\in \{2,4,6,8\}} \RomanII_{1,3,l} &\overset{\eqref{est 14}}{=} -\sum_{k,l=1}^{3} \int_{\mathbb{R}^{3}}  \partial_{k}^{2}\partial_{l} b_{3}\partial_{1} b_{3}\partial_{l} \partial_{3}b_{2} - \partial_{k}^{2}\partial_{l}b_{3} \partial_{3}b_{1}\partial_{l}\partial_{3}b_{2} \nonumber\\
& - \partial_{k}^{2}\partial_{l} b_{3}\partial_{2}b_{3}\partial_{l}\partial_{3}b_{1} + \partial_{k}^{2}\partial_{l} b_{3} \partial_{3}b_{2}\partial_{l}\partial_{3} b_{1} dx \lesssim \int_{\mathbb{R}^{3}}  \lvert \nabla^{3} b_{v} \rvert \lvert \nabla b \rvert \lvert \nabla^{2} b_{h} \rvert dx, 
\end{align}
\begin{align}
\sum_{l\in \{3,7 \}} \RomanII_{1,3,l} \overset{\eqref{est 14}}{=}& - \sum_{k,l=1}^{3}\int_{\mathbb{R}^{3}} \partial_{k}^{2}\partial_{l}b_{3}\partial_{3}b_{1}\partial_{l}\partial_{2}b_{3} - \partial_{k}^{2}\partial_{l}b_{3}\partial_{3}b_{2}\partial_{l}\partial_{1}b_{3} dx \nonumber \\
=& \sum_{k,l=1}^{3}\int_{\mathbb{R}^{3}}  \partial_{k}\partial_{l}b_{3}\partial_{k}\partial_{3}b_{1}\partial_{l}\partial_{2} b_{3} + \partial_{k}\partial_{l} b_{3} \partial_{3} b_{1}\partial_{k}\partial_{l}\partial_{2} b_{3} \nonumber \\
& - \partial_{k}\partial_{l} b_{3} \partial_{k} \partial_{3} b_{2} \partial_{l}\partial_{1} b_{3} - \partial_{k}\partial_{l} b_{3} \partial_{3} b_{2}\partial_{k}\partial_{l}\partial_{1} b_{3}  dx \nonumber \\
=& \sum_{k,l=1}^{3} \int_{\mathbb{R}^{3}}  \partial_{k}\partial_{l} b_{3}\partial_{k}\partial_{3} b_{1} \partial_{l}\partial_{2} b_{3} + \partial_{3} b_{1} \frac{1}{2}\partial_{2} (\partial_{k} \partial_{l} b_{3})^{2} \nonumber \\
& - \partial_{k} \partial_{l} b_{3} \partial_{k}\partial_{3} b_{2} \partial_{l}\partial_{1} b_{3} - \partial_{3} b_{2} \frac{1}{2}\partial_{1} (\partial_{k} \partial_{l} b_{3})^{2} dx \nonumber \\
=& \sum_{k,l=1}^{3} \int_{\mathbb{R}^{3}}  \partial_{k} \partial_{l} b_{3} \partial_{k} \partial_{3} b_{1} \partial_{l} \partial_{2} b_{3} - \frac{1}{2}\partial_{3}\partial_{2} b_{1} (\partial_{k}\partial_{l} b_{3})^{2} \nonumber \\
& - \partial_{k}\partial_{l} b_{3}\partial_{k}\partial_{3} b_{2} \partial_{l}\partial_{1} b_{3} + \frac{1}{2} \partial_{1}\partial_{3} b_{2} (\partial_{k} \partial_{l} b_{3})^{2} dx \lesssim \int_{\mathbb{R}^{3}}  \lvert \nabla^{2} b_{v} \rvert^{2} \lvert \nabla^{2} b_{h} \rvert dx, 
\end{align}
\begin{align}
\sum_{l\in \{4,5, 6, 8 \}} \RomanII_{2,5,l} &\overset{\eqref{est 18}}{=} \sum_{k,l=1}^{3} \int_{\mathbb{R}^{3}}  \partial_{k}^{2} \partial_{l} b_{2}\partial_{2} b_{1}\partial_{l}\partial_{3} b_{2}  - \partial_{k}^{2}\partial_{l} b_{2}\partial_{2} b_{3}\partial_{l}\partial_{1} b_{2} \nonumber \\
& + \partial_{k}^{2}\partial_{l} b_{2}\partial_{2} b_{3}\partial_{l} \partial_{2} b_{1} - \partial_{k}^{2}\partial_{l} b_{2}\partial_{3} b_{2}\partial_{l}\partial_{2} b_{1} dx \lesssim \int_{\mathbb{R}^{3}}  \lvert \nabla^{3} b_{h} \rvert \lvert \nabla b \rvert \lvert \nabla^{2} b_{h} \rvert dx, 
\end{align}
\begin{align}
\sum_{l\in \{1,3\}} \RomanII_{2,5,l} \overset{\eqref{est 18}}{=}&  \sum_{k,l=1}^{3} \int_{\mathbb{R}^{3}}  \partial_{k}^{2}\partial_{l} b_{2}\partial_{1} b_{2}\partial_{l}\partial_{2} b_{3} - \partial_{k}^{2}\partial_{l} b_{2}\partial_{2} b_{1}\partial_{l}\partial_{2} b_{3} dx \nonumber \\
=& - \sum_{k,l=1}^{3}\int_{\mathbb{R}^{3}}  \partial_{k}\partial_{l} b_{2}\partial_{k} (\partial_{1} b_{2}\partial_{l}\partial_{2} b_{3}) - \partial_{k}\partial_{l} b_{2}\partial_{k} (\partial_{2}b_{1}\partial_{l}\partial_{2} b_{3}) dx\nonumber\\
\lesssim& \int_{\mathbb{R}^{3}}  \lvert \nabla^{2} b_{h} \rvert ( \lvert \nabla^{2} b_{h} \rvert \lvert \nabla^{2} b_{v} \rvert  + \lvert \nabla b_{h} \rvert \lvert \nabla^{3} b_{v} \rvert) dx, 
\end{align}
\begin{align}
\sum_{l\in \{2,5,6,7 \}} \RomanII_{4,6,l} &\overset{\eqref{est 22}}{=} - \sum_{k,l=1}^{3}\int_{\mathbb{R}^{3}}  \partial_{k}^{2}\partial_{l} b_{1}\partial_{1} b_{2}\partial_{l}\partial_{3} b_{1} + \partial_{k}^{2}\partial_{l} b_{1}\partial_{1} b_{3}\partial_{l}\partial_{1} b_{2} \nonumber \\
& - \partial_{k}^{2}\partial_{l} b_{1}\partial_{1} b_{3}\partial_{l}\partial_{2} b_{1} - \partial_{k}^{2}\partial_{l} b_{1} \partial_{3} b_{1}\partial_{l}\partial_{1} b_{2} dx \lesssim \int_{\mathbb{R}^{3}}  \lvert \nabla^{3} b_{h} \rvert \lvert \nabla b \rvert \lvert \nabla^{2} b_{h} \rvert dx, 
\end{align}
and
\begin{align}
\sum_{l\in \{1,3\}} \RomanII_{4,6,l} \overset{\eqref{est 22}}{=}& \sum_{k,l=1}^{3}\int_{\mathbb{R}^{3}}  \partial_{k}^{2}\partial_{l} b_{1}\partial_{1} b_{2}\partial_{l}\partial_{1} b_{3} - \partial_{k}^{2}\partial_{l} b_{1} \partial_{2} b_{1}\partial_{l}\partial_{1} b_{3}  dx \nonumber\\
=& - \sum_{k,l=1}^{3} \int_{\mathbb{R}^{3}}  \partial_{k}\partial_{l} b_{1}\partial_{k} (\partial_{1} b_{2}\partial_{l}\partial_{1} b_{3}) - \partial_{k}\partial_{l} b_{1} \partial_{k} (\partial_{2} b_{1} \partial_{l} \partial_{1} b_{3})  dx \nonumber\\
\lesssim& \int_{\mathbb{R}^{3}}  \lvert \nabla^{2} b_{h} \rvert (\lvert \nabla^{2} b_{h} \rvert \lvert \nabla^{2} b_{v} \rvert  + \lvert \nabla b_{h} \rvert \lvert \nabla^{3} b_{v} \rvert) dx. \label{est 136}
\end{align}
Applying \eqref{est 135}-\eqref{est 136} to \eqref{est 61}  gives \eqref{est 58}.   

Next, we prove \eqref{est 86} in the $2\frac{1}{2}$-D case. We can consider $b(t,x) = (b_{1}, b_{2}, b_{3})(t, x_{1}, x_{2})$ as the special 3-D flow that does not depend on $x_{3}$. Then we already have the identity \eqref{est 61}
\begin{align}
\int_{\mathbb{R}^{2}} \Delta \nabla \times (j\times b) \cdot \Delta b dx =& \sum_{l\in \{2,3,4,6,7,8\}} \RomanI_{1,3,l} + \sum_{l\in \{1,3,4,5,6,8 \}} \RomanI_{2,5,l} + \sum_{l \in \{1,2,3, 5,6,7 \}} \RomanI_{4,6,l} \nonumber \\
&+ \sum_{l\in \{2,3,4,6,7,8\}} \RomanII_{1,3,l} +  \sum_{l\in \{1,3,4,5,6,8 \}} \RomanII_{2,5,l} + \sum_{l \in \{1,2,3, 5,6,7 \}} \RomanII_{4,6,l}.  \label{est 65}  
\end{align}

First, we see from \eqref{est 6} that 
\begin{equation}\label{est 78} 
\sum_{l \in \{2, 3,4,6, 7,8\}} \RomanI_{1,3,l} = 0 
\end{equation} 
as they all involve $\partial_{3}$. 

Second, concerning $\sum_{l \in \{1,3,4,5,6,8 \}} \RomanI_{2,5,l}$, from \eqref{est 8} we see that $\RomanI_{2,5,4} = \RomanI_{2,5,8} = 0$ due to $\partial_{3}$ therein; on the other hand, 
 \begin{align}
\sum_{l \in \{1,3\}} \RomanI_{2,5,l}\overset{\eqref{est 8}}{=}& -2 \sum_{k,l=1}^{2} \int_{\mathbb{R}^{2}} \partial_{k} b_{2} \partial_{k} \partial_{1} b_{2}\partial_{l}^{2} \partial_{2} b_{3} - \partial_{k} b_{2}\partial_{k}\partial_{2} b_{1}\partial_{l}^{2}\partial_{2} b_{3} dx \nonumber\\
=& 2\sum_{k,l=1}^{2}\int_{\mathbb{R}^{2}} \partial_{l} (\partial_{k} b_{2}\partial_{k}\partial_{1} b_{2}) \partial_{l}\partial_{2} b_{3} - \partial_{l} (\partial_{k} b_{2}\partial_{k}\partial_{2} b_{1}) \partial_{l}\partial_{2} b_{3} dx \nonumber \\
\lesssim& \int_{\mathbb{R}^{2}} ( \lvert \nabla b_{h} \rvert \lvert \nabla^{3} b_{h} \rvert  + \lvert \nabla^{2} b_{h} \rvert^{2}) \lvert \nabla^{2} b_{v} \rvert dx. \label{est 137}
\end{align}
Additionally, 
\begin{align}
\sum_{l \in \{5,6\}} \RomanI_{2,5,l} \overset{\eqref{est 8}}{=}& 2 \sum_{k,l=1}^{2}\int_{\mathbb{R}^{2}} \partial_{k} b_{2}\partial_{k} \partial_{2} b_{3}\partial_{l}^{2}\partial_{1} b_{2} - \partial_{k} b_{2}\partial_{k}\partial_{2} b_{3}\partial_{l}^{2}\partial_{2} b_{1} dx \nonumber\\
\lesssim& \int_{\mathbb{R}^{2}}\lvert \nabla b_{h} \rvert \lvert \nabla^{3} b_{h} \rvert \lvert \nabla^{2} b_{v} \rvert dx. \label{est 79} 
\end{align}

Third, concerning $\sum_{l\in \{1,2,3,5,6,7\}} \RomanI_{4,6,l}$, from \eqref{est 10}  we see that $\RomanI_{4,6,2} = \RomanI_{4,6,7} = 0$ due to $\partial_{3}$ therein; on the other hand, 
\begin{align}
\sum_{l \in \{1,3\}} \RomanI_{4,6,l} \overset{\eqref{est 10}}{=}& - 2 \sum_{k,l=1}^{2} \int_{\mathbb{R}^{2}} \partial_{k} b_{1}\partial_{k}\partial_{1} b_{2}\partial_{l}^{2}\partial_{1} b_{3} - \partial_{k} b_{1}\partial_{k}\partial_{2} b_{1}\partial_{l}^{2} \partial_{1} b_{3} dx \nonumber \\
=& 2 \sum_{k,l=1}^{2} \int_{\mathbb{R}^{2}}\partial_{l} (\partial_{k} b_{1}\partial_{k}\partial_{1} b_{2}) \partial_{l}\partial_{1} b_{3} - \partial_{l} (\partial_{k} b_{1}\partial_{k}\partial_{2} b_{1}) \partial_{l}\partial_{1} b_{3} dx \nonumber \\
\lesssim& \int_{\mathbb{R}^{2}} (\lvert \nabla^{2} b_{h} \rvert^{2} + \lvert \nabla b_{h} \rvert \lvert \nabla^{3} b_{h} \rvert) \lvert \nabla^{2} b_{v} \rvert dx. \label{est 80}
\end{align}
Additionally,
\begin{align}
\sum_{l\in \{5,6\}} \RomanI_{4,6,l} \overset{\eqref{est 10}}{=}& 2 \sum_{k,l=1}^{2} \int_{\mathbb{R}^{2}} \partial_{k} b_{1}\partial_{k}\partial_{1} b_{3}\partial_{l}^{2}\partial_{1} b_{2} - \partial_{k} b_{1}\partial_{k}\partial_{1} b_{3} \partial_{l}^{2} \partial_{2} b_{1} dx \nonumber\\
\lesssim& \int_{\mathbb{R}^{2}} \lvert \nabla b_{h} \rvert \lvert \nabla^{3} b_{h} \rvert \lvert \nabla^{2} b_{v} \rvert dx.  \label{est 81} 
\end{align}

Fourth, from \eqref{est 14} we see that 
\begin{equation}\label{est 82} 
\sum_{l\in \{2,3,4,6,7,8\}} \RomanII_{1,3,l} =0 
\end{equation}
due to $\partial_{3}$ therein. 

Fifth, concerning $\sum_{l \in \{1,3,4,5,6,8 \}} \RomanII_{2,5,l}$, we see from \eqref{est 18} that $\RomanII_{2,5,4} = \RomanII_{2,5,8} = 0$ due to $\partial_{3}$ therein. We estimate 
\begin{align}
\sum_{l \in \{1,3\}} \RomanII_{2,5,l} \overset{\eqref{est 18}}{=}& \sum_{k,l=1}^{2} \int_{\mathbb{R}^{2}}\partial_{k}^{2}\partial_{l} b_{2}\partial_{1} b_{2}\partial_{l}\partial_{2} b_{3} - \partial_{k}^{2}\partial_{l} b_{2}\partial_{2} b_{1} \partial_{l}\partial_{2} b_{3} dx \nonumber\\
\lesssim& \int_{\mathbb{R}^{2}} \lvert \nabla^{3} b_{h} \rvert \lvert \nabla b_{h} \rvert \lvert \nabla^{2} b_{v} \rvert dx. \label{est 83} 
\end{align}
We only write $\sum_{l \in \{5,6\}} \RomanII_{2,5,l}$ in detail as 
\begin{equation}\label{est 67} 
\sum_{l \in \{5,6\}} \RomanII_{2,5,l} \overset{\eqref{est 18}}{=} - \sum_{k,l=1}^{2} \int_{\mathbb{R}^{2}}\partial_{k}^{2}\partial_{l} b_{2}\partial_{2} b_{3}\partial_{l}\partial_{1} b_{2} - \partial_{k}^{2}\partial_{l} b_{2} \partial_{2} b_{3}\partial_{l}\partial_{2} b_{1} dx = \sum_{l=1}^{8} \RomanV_{l} 
\end{equation} 
where 
\begin{subequations}\label{est 69} 
\begin{align}
\RomanV_{1} \triangleq& - \int_{\mathbb{R}^{2}} \partial_{1}^{2} \partial_{1} b_{2}\partial_{2} b_{3} \partial_{1}\partial_{1} b_{2} dx, \hspace{9mm} \RomanV_{2} \triangleq \int_{\mathbb{R}^{2}} \partial_{1}^{2}\partial_{1} b_{2}\partial_{2} b_{3}\partial_{1}\partial_{2} b_{1} dx, \\
\RomanV_{3} \triangleq& - \int_{\mathbb{R}^{2}} \partial_{1}^{2}\partial_{2}b_{2}\partial_{2} b_{3} \partial_{2}\partial_{1} b_{2} dx, \hspace{9mm} \RomanV_{4} \triangleq \int_{\mathbb{R}^{2}} \partial_{1}^{2}\partial_{2} b_{2}\partial_{2} b_{3}\partial_{2}\partial_{2} b_{1} dx, \\
\RomanV_{5} \triangleq& -\int_{\mathbb{R}^{2}} \partial_{2}^{2}\partial_{1} b_{2}\partial_{2} b_{3}\partial_{1}\partial_{1} b_{2} dx, \hspace{9mm} \RomanV_{6} \triangleq \int_{\mathbb{R}^{2}} \partial_{2}^{2}\partial_{1} b_{2}\partial_{2} b_{3}\partial_{1}\partial_{2} b_{1} dx, \\
\RomanV_{7} \triangleq& -\int_{\mathbb{R}^{2}} \partial_{2}^{2}\partial_{2} b_{2}\partial_{2} b_{3} \partial_{2}\partial_{1} b_{2} dx, \hspace{9mm} \RomanV_{8} \triangleq \int_{\mathbb{R}^{2}} \partial_{2}^{2}\partial_{2} b_{2}\partial_{2} b_{3}\partial_{2}\partial_{2} b_{1} dx, 
\end{align} 
\end{subequations} 
in which $\RomanV_{1}$ and $\RomanV_{2}, \RomanV_{3}$ and $\RomanV_{4}, \RomanV_{5}$ and $\RomanV_{6}$, as well as $\RomanV_{7}$ and $\RomanV_{8}$ correspond to the terms $(k,l) = (1,1), (1,2), (2,1),$ and $(2,2)$, respectively; we will come back to treat them subsequently. 

Sixth, concerning $\sum_{l \in \{1,2,3,5,6, 7 \}} \RomanII_{4,6,l}$, we see from \eqref{est 22} that $\RomanII_{4,6,2} = \RomanII_{4,6,7} = 0$ due to $\partial_{3}$ therein. We estimate 
\begin{align}
\sum_{l \in \{1,3 \}} \RomanII_{4,6,l} \overset{\eqref{est 22}}{=}& \sum_{k,l=1}^{2}\int_{\mathbb{R}^{2}} \partial_{k}^{2}\partial_{l} b_{1}\partial_{1} b_{2}\partial_{l}\partial_{1} b_{3} - \partial_{k}^{2}\partial_{l} b_{1} \partial_{2} b_{1}\partial_{l}\partial_{1} b_{3} dx \nonumber \\
\lesssim& \int_{\mathbb{R}^{2}} \lvert \nabla^{3} b_{h} \rvert \lvert \nabla b_{h} \rvert \lvert \nabla^{2} b_{v} \rvert dx. \label{est 84} 
\end{align}
On the other hand, we write out the remaining terms as 
\begin{equation}\label{est 66} 
\sum_{l \in \{5,6\}} \RomanII_{4,6,l} \overset{\eqref{est 22}}{=} -\sum_{k,l=1}^{2}\int_{\mathbb{R}^{2}} \partial_{k}^{2}\partial_{l} b_{1}\partial_{1} b_{3} \partial_{l} \partial_{1} b_{2} - \partial_{k}^{2} \partial_{l} b_{1} \partial_{1} b_{3} \partial_{l}\partial_{2} b_{1} dx  = \sum_{l=1}^{8} \RomanVI_{l} 
\end{equation} 
where 
\begin{subequations}\label{est 70} 
\begin{align}
\RomanVI_{1} \triangleq& - \int_{\mathbb{R}^{2}} \partial_{1}^{2} \partial_{1} b_{1}\partial_{1} b_{3} \partial_{1}\partial_{1} b_{2} dx, \hspace{9mm} \RomanVI_{2} \triangleq \int_{\mathbb{R}^{2}} \partial_{1}^{2}\partial_{1} b_{1}\partial_{1} b_{3}\partial_{1}\partial_{2} b_{1} dx, \\
\RomanVI_{3} \triangleq& - \int_{\mathbb{R}^{2}} \partial_{1}^{2}\partial_{2}b_{1}\partial_{1} b_{3} \partial_{2}\partial_{1} b_{2} dx, \hspace{9mm} \RomanVI_{4} \triangleq \int_{\mathbb{R}^{2}} \partial_{1}^{2}\partial_{2} b_{1}\partial_{1} b_{3}\partial_{2}\partial_{2} b_{1} dx, \\
\RomanVI_{5} \triangleq& -\int_{\mathbb{R}^{2}} \partial_{2}^{2}\partial_{1} b_{1}\partial_{1} b_{3}\partial_{1}\partial_{1} b_{2} dx, \hspace{9mm} \RomanVI_{6} \triangleq \int_{\mathbb{R}^{2}} \partial_{2}^{2}\partial_{1} b_{1}\partial_{1} b_{3}\partial_{1}\partial_{2} b_{1} dx, \\
\RomanVI_{7} \triangleq& -\int_{\mathbb{R}^{2}} \partial_{2}^{2}\partial_{2} b_{1}\partial_{1} b_{3} \partial_{2}\partial_{1} b_{2} dx, \hspace{9mm} \RomanVI_{8} \triangleq \int_{\mathbb{R}^{2}} \partial_{2}^{2}\partial_{2} b_{1}\partial_{1} b_{3}\partial_{2}\partial_{2} b_{1} dx, 
\end{align} 
\end{subequations} 
in which $\RomanVI_{1}$ and $\RomanVI_{2}, \RomanVI_{3}$ and $\RomanVI_{4}, \RomanVI_{5}$ and $\RomanVI_{6}$, and $\RomanVI_{7}$ and $\RomanVI_{8}$ correspond to the terms $(k,l) = (1,1), (1,2), (2,1),$ and $(2,2)$, respectively. We can estimate 
\begin{subequations}\label{est 72}
\begin{align}
\RomanV_{1} \overset{\eqref{est 69}}{=}& -\int_{\mathbb{R}^{2}} \partial_{1}^{2}\partial_{1} b_{2} \partial_{2} b_{3} \partial_{1}\partial_{1} b_{2} dx = - \int_{\mathbb{R}^{2}} \partial_{2} b_{3} \frac{1}{2} \partial_{1} (\partial_{1}^{2} b_{2})^{2} dx \nonumber \\
=& \int_{\mathbb{R}^{2}} \partial_{1}\partial_{2} b_{3} \frac{1}{2} (\partial_{1}^{2} b_{2})^{2} dx \lesssim \int_{\mathbb{R}^{2}} \lvert \nabla^{2} b_{v} \rvert \lvert \nabla^{2} b_{h} \rvert^{2} dx, \\
\RomanV_{3} \overset{\eqref{est 69}}{=}& - \int_{\mathbb{R}^{2}} \partial_{1}^{2}\partial_{2} b_{2}\partial_{2} b_{3}\partial_{2}\partial_{1} b_{2} dx = - \int_{\mathbb{R}^{2}} \partial_{2} b_{3} \frac{1}{2} \partial_{1} (\partial_{1}\partial_{2} b_{2})^{2} dx\nonumber\\
=& \int_{\mathbb{R}^{2}} \partial_{1} \partial_{2} b_{3} \frac{1}{2} (\partial_{2}\partial_{1} b_{2})^{2} dx \lesssim \int_{\mathbb{R}^{2}} \lvert \nabla^{2} b_{v} \rvert \lvert \nabla^{2} b_{h} \rvert^{2} dx, \\
\RomanVI_{6} \overset{\eqref{est 70}}{=}& \int_{\mathbb{R}^{2}} \partial_{2}^{2}\partial_{1} b_{1}\partial_{1}b_{3}\partial_{1}\partial_{2} b_{1} dx = \int_{\mathbb{R}^{2}} \partial_{1} b_{3} \frac{1}{2} \partial_{2} (\partial_{1}\partial_{2} b_{1})^{2} dx \nonumber \\
=& -\int_{\mathbb{R}^{2}} \partial_{2}\partial_{1} b_{3} \frac{1}{2} (\partial_{1}\partial_{2} b_{1})^{2} dx \lesssim \int_{\mathbb{R}^{2}} \lvert \nabla^{2} b_{v} \rvert \lvert \nabla^{2} b_{h} \rvert^{2} dx, \\
\RomanVI_{8} \overset{\eqref{est 70}}{=}& \int_{\mathbb{R}^{2}} \partial_{2}^{2}\partial_{2} b_{1}\partial_{1} b_{3}\partial_{2}\partial_{2} b_{1} dx = \int_{\mathbb{R}^{2}} \partial_{1} b_{3} \frac{1}{2} \partial_{2} (\partial_{2}^{2} b_{1})^{2} dx \nonumber \\
=& - \int_{\mathbb{R}^{2}} \partial_{2}\partial_{1} b_{3} \frac{1}{2} (\partial_{2}^{2} b_{1})^{2} dx \lesssim \int_{\mathbb{R}^{2}} \lvert \nabla^{2} b_{v} \rvert \lvert \nabla^{2} b_{h} \rvert^{2} dx. 
\end{align}
\end{subequations}
Next, we use divergence-free condition of $\partial_{1}b_{1} = - \partial_{2} b_{2}$ to estimate 
\begin{subequations}\label{est 73} 
\begin{align}
\RomanV_{6} \overset{\eqref{est 69}}{=}& \int_{\mathbb{R}^{2}} \partial_{2}^{2} \partial_{1} b_{2} \partial_{2} b_{3} \partial_{1}\partial_{2} b_{1} dx = -\int_{\mathbb{R}^{2}} \partial_{2} \partial_{1}^{2} b_{1} \partial_{2} b_{3}\partial_{1}\partial_{2} b_{1} dx \\
=& -\int_{\mathbb{R}^{2}} \partial_{2} b_{3} \frac{1}{2} \partial_{1} (\partial_{1} \partial_{2} b_{1})^{2} dx = \int_{\mathbb{R}^{2}} \partial_{1}\partial_{2} b_{3} \frac{1}{2} (\partial_{1}\partial_{2} b_{1})^{2} dx \lesssim \int_{\mathbb{R}^{2}} \lvert \nabla^{2} b_{v} \rvert \lvert \nabla^{2} b_{h} \rvert^{2} dx, \nonumber \\
\RomanV_{8} \overset{\eqref{est 69}}{=}& \int_{\mathbb{R}^{2}} \partial_{2}^{2} \partial_{2} b_{2} \partial_{2} b_{3} \partial_{2}^{2} b_{1} dx= -\int_{\mathbb{R}^{2}} \partial_{2}^{2} \partial_{1} b_{1}\partial_{2} b_{3}\partial_{2}^{2} b_{1} dx  \\
=& -\int_{\mathbb{R}^{2}} \partial_{2} b_{3} \frac{1}{2} \partial_{1} (\partial_{2}^{2} b_{1})^{2} dx =\int_{\mathbb{R}^{2}} \partial_{1}\partial_{2} b_{3} \frac{1}{2} (\partial_{2}^{2} b_{1})^{2} dx \lesssim \int_{\mathbb{R}^{2}} \lvert \nabla^{2} b_{v} \rvert \lvert \nabla^{2} b_{h} \rvert^{2} dx, \nonumber \\
\RomanVI_{1} \overset{\eqref{est 70}}{=}& - \int_{\mathbb{R}^{2}} \partial_{1}^{2} \partial_{1} b_{1}\partial_{1} b_{3}\partial_{1}\partial_{1} b_{2} dx = \int_{\mathbb{R}^{2}} \partial_{1}^{2}\partial_{2} b_{2}\partial_{1} b_{3} \partial_{1}^{2} b_{2} dx \\
=& \int_{\mathbb{R}^{2}} \partial_{1} b_{3} \frac{1}{2} \partial_{2} (\partial_{1}^{2} b_{2}) dx = -\int_{\mathbb{R}^{2}} \partial_{2}\partial_{1} b_{3} \frac{1}{2} (\partial_{1}^{2} b_{2})^{2} dx \lesssim \int_{\mathbb{R}^{2}} \lvert \nabla^{2} b_{v} \rvert \lvert \nabla^{2} b_{h} \rvert^{2} dx, \nonumber \\
\RomanVI_{3} \overset{\eqref{est 70}}{=}& -\int_{\mathbb{R}^{2}} \partial_{1}^{2}\partial_{2} b_{1} \partial_{1} b_{3} \partial_{2}\partial_{1} b_{2} dx = \int_{\mathbb{R}^{2}} \partial_{1}\partial_{2}^{2} b_{2} \partial_{1} b_{3} \partial_{2}\partial_{1} b_{2} dx \\
=& \int_{\mathbb{R}^{2}} \partial_{1} b_{3} \frac{1}{2} \partial_{2} (\partial_{1}\partial_{2} b_{2})^{2} dx = - \int_{\mathbb{R}^{2}} \partial_{2}\partial_{1} b_{3} \frac{1}{2} (\partial_{1}\partial_{2} b_{2})^{2} dx \lesssim \int_{\mathbb{R}^{2}} \lvert \nabla^{2} b_{v} \rvert \lvert \nabla^{2} b_{h} \rvert^{2} dx. \nonumber 
\end{align}
\end{subequations}
Next, we combine $\RomanV_{2}$ and $\RomanV_{5}$ and integrate by parts to shift ``$\partial_{1}$'' within $\RomanV_{5}$ to obtain 
\begin{align}
\RomanV_{2} + \RomanV_{5} \overset{\eqref{est 69}}{=}& \int_{\mathbb{R}^{2}} \partial_{1}^{2} \partial_{1} b_{2}\partial_{2} b_{3}\partial_{1}\partial_{2} b_{1} - \partial_{2}^{2}\partial_{1} b_{2}\partial_{2}b_{3}\partial_{1}\partial_{1} b_{2} dx \nonumber \\
=& \int_{\mathbb{R}^{2}} \partial_{1}^{3} b_{2}\partial_{2} b_{3}\partial_{1}\partial_{2} b_{1} + \partial_{2}^{2} b_{2}\partial_{1}\partial_{2} b_{3}\partial_{1}^{2} b_{2} + \partial_{2}^{2}b_{2} \partial_{2} b_{3} \partial_{1}^{3} b_{2} dx, \label{est 71}  
\end{align}
we use divergence-free condition of $\partial_{2}b_{2} = -\partial_{1} b_{1}$ in the third term of \eqref{est 71} to rewrite it as 
\begin{align*}
\int_{\mathbb{R}^{2}} \partial_{2}^{2}b_{2}\partial_{2}b_{3}\partial_{1}^{3} b_{2} dx = -\int_{\mathbb{R}^{2}} \partial_{1}\partial_{2} b_{1} \partial_{2} b_{3} \partial_{1}^{3} b_{2} dx 
\end{align*}
and realize that this cancels out the first term in \eqref{est 71} so that \eqref{est 71} simplifies to 
\begin{equation}\label{est 74} 
\RomanV_{2} + \RomanV_{5}  = \int_{\mathbb{R}^{2}} \partial_{2}^{2} b_{2}\partial_{1}\partial_{2} b_{3}\partial_{1}^{2} b_{2} dx \lesssim \int_{\mathbb{R}^{2}} \lvert \nabla^{2} b_{h} \rvert^{2} \lvert \nabla^{2} b_{v} \rvert dx. 
\end{equation} 
We will discover three more similar cancellations in \eqref{est 75}-\eqref{est 77}, for which we use divergence-free condition first and then integrate by parts for convenience. We work on $\RomanV_{4} + \RomanV_{7}$ as follows:  
\begin{align}
\RomanV_{4} + \RomanV_{7} \overset{\eqref{est 69}}{=}& \int_{\mathbb{R}^{2}} \partial_{1}^{2}\partial_{2} b_{2} \partial_{2} b_{3}\partial_{2}\partial_{2} b_{1} - \partial_{2}^{2}\partial_{2} b_{2} \partial_{2} b_{3} \partial_{2} \partial_{1} b_{2} dx \nonumber \\
=& \int_{\mathbb{R}^{2}} \partial_{1}^{2}\partial_{2} b_{2} \partial_{2} b_{3} \partial_{2}^{2} b_{1} + \partial_{2}^{2} \partial_{1} b_{1} \partial_{2} b_{3} \partial_{2}\partial_{1} b_{2} dx \nonumber \\
=& \int_{\mathbb{R}^{2}} \partial_{1}^{2} \partial_{2} b_{2}\partial_{2} b_{3} \partial_{2}^{2} b_{1} - \partial_{2}^{2} b_{1} \partial_{1}\partial_{2} b_{3} \partial_{2}\partial_{1} b_{2} - \partial_{2}^{2} b_{1} \partial_{2} b_{3} \partial_{2}\partial_{1}^{2} b_{2} dx \nonumber \\
=& -\int_{\mathbb{R}^{2}} \partial_{2}^{2} b_{1} \partial_{1} \partial_{2} b_{3} \partial_{2} \partial_{1} b_{2} dx \lesssim \int_{\mathbb{R}^{2}} \lvert \nabla^{2} b_{h} \rvert^{2} \lvert \nabla^{2} b_{v} \rvert dx. \label{est 75} 
\end{align}
Next, we work on $\RomanVI_{2} + \RomanVI_{5}$ as follows:
\begin{align} 
\RomanVI_{2} + \RomanVI_{5} \overset{\eqref{est 70}}{=}& \int_{\mathbb{R}^{2}} \partial_{1}^{2} \partial_{1} b_{1} \partial_{1} b_{3} \partial_{1} \partial_{2} b_{1} - \partial_{2}^{2} \partial_{1} b_{1} \partial_{1} b_{3} \partial_{1} \partial_{1} b_{2} dx \nonumber \\
=& \int_{\mathbb{R}^{2}} ( - \partial_{1}^{2} \partial_{2} b_{2} \partial_{1} b_{3} \partial_{1} \partial_{2} b_{1}) - \partial_{2}^{2}\partial_{1} b_{1} \partial_{1} b_{3}\partial_{1}^{2} b_{2} dx \nonumber \\
=& \int_{\mathbb{R}^{2}} \partial_{1}^{2} b_{2} \partial_{1} \partial_{2} b_{3} \partial_{1} \partial_{2} b_{1} + \partial_{1}^{2} b_{2} \partial_{1} b_{3}\partial_{2}\partial_{1} \partial_{2} b_{1} - \partial_{2}^{2}\partial_{1} b_{1} \partial_{1} b_{3} \partial_{1}^{2} b_{2} dx \nonumber \\
=& \int_{\mathbb{R}^{2}} \partial_{1}^{2} b_{2} \partial_{1} \partial_{2} b_{3} \partial_{1} \partial_{2} b_{1} dx \lesssim \int_{\mathbb{R}^{2}} \lvert \nabla^{2} b_{h} \rvert^{2} \lvert \nabla^{2} b_{v} \rvert dx. \label{est 76} 
\end{align}
Finally, we work on $\RomanVI_{4}+  \RomanVI_{7}$ as follows: 
\begin{align}
\RomanVI_{4} +\RomanVI_{7}  \overset{\eqref{est 70}}{=}& \int_{\mathbb{R}^{2}} \partial_{1}^{2}\partial_{2} b_{1}\partial_{1} b_{3}\partial_{2}\partial_{2} b_{1} - \partial_{2}^{2}\partial_{2} b_{1}\partial_{1} b_{3} \partial_{2}\partial_{1} b_{2} dx \nonumber \\
=& \int_{\mathbb{R}^{2}} (-\partial_{1}\partial_{2}^{2} b_{2} \partial_{1} b_{3}\partial_{2}^{2} b_{1}) - \partial_{2}^{3} b_{1}\partial_{1} b_{3}\partial_{2} \partial_{1} b_{2} dx \nonumber \\
=& \int_{\mathbb{R}^{2}} \partial_{1}\partial_{2} b_{2} \partial_{2}\partial_{1} b_{3} \partial_{2}^{2} b_{1} + \partial_{1}\partial_{2} b_{2} \partial_{1} b_{3}\partial_{2}^{3} b_{1} - \partial_{2}^{3} b_{1} \partial_{1} b_{3}\partial_{2}\partial_{1} b_{2} dx \nonumber \\
=& \int_{\mathbb{R}^{2}} \partial_{1}\partial_{2} b_{2}\partial_{2}\partial_{1} b_{3} \partial_{2}^{2} b_{1} dx \lesssim \int_{\mathbb{R}^{2}} \lvert \nabla^{2} b_{h} \rvert^{2} \lvert \nabla^{2} b_{v} \rvert dx. \label{est 77} 
\end{align}
 Applying \eqref{est 72}, \eqref{est 73}, \eqref{est 74}-\eqref{est 77} to \eqref{est 67}  and \eqref{est 66} to deduce 
\begin{align}
\sum_{l \in \{5,6\}} \RomanII_{2,5,l} + \sum_{l \in \{5,6\}} \RomanII_{4,6,l}  \overset{\eqref{est 67} \eqref{est 66}}{=}& \sum_{l=1}^{8} \RomanV_{l} + \RomanVI_{l} \nonumber\\
\overset{\eqref{est 72} \eqref{est 73} \eqref{est 74} \eqref{est 75} \eqref{est 76} \eqref{est 77} }{\lesssim}& \int_{\mathbb{R}^{2}} \lvert \nabla^{2} b_{h} \rvert^{2} \lvert \nabla^{2} b_{v} \rvert dx. \label{est 85}
\end{align}  
Applying \eqref{est 78}-\eqref{est 83}, \eqref{est 84}, and \eqref{est 85} to \eqref{est 65} gives 
\begin{equation*}
\int_{\mathbb{R}^{2}} \Delta \nabla \times (j\times b) \cdot \Delta b dx \lesssim \int_{\mathbb{R}^{2}} ( \lvert \nabla b_{h} \rvert \lvert \nabla^{3} b_{h} \rvert + \lvert \nabla^{2} b_{h} \rvert^{2}) \lvert \nabla^{2} b_{v} \rvert dx
\end{equation*} 
which is \eqref{est 86} as desired. 
\end{proof} 

\section{Proof of Theorem \ref{Theorem 2.1}}\label{Section 4}
We recall that the smooth solution to the 3-D Hall-MHD  \eqref{est 1} system satisfies 
\begin{equation*}
u, b \in L_{T}^{\infty} L_{x}^{2} \cap L_{T}^{2} H_{x}^{1}. 
\end{equation*} 
As we described in Subsection \ref{Subsection 1.3}, it suffices to prove the $H^{2}(\mathbb{R}^{3})$-bound.  The crux of the proof is Proposition \ref{Proposition 3.1} that allows our hypothesis \eqref{est 49b} on the horizontal components of the magnetic field to deduce an $H^{2}(\mathbb{R}^{3})$-bound. Nonetheless, because our condition on $u_{h}$ in \eqref{est 49a} is too weak to immediately deduce an $H^{2}(\mathbb{R}^{3})$-estimate, we start with an $H^{1}(\mathbb{R}^{3})$-estimate first. 

\begin{proposition}\label{Proposition 4.1} 
Under the hypothesis of Theorem \ref{Theorem 2.1}, suppose that $(u,b)$ is a smooth solution to the 3-D Hall-MHD system \eqref{est 1} over $[0,T]$. Then 
\begin{equation*}
u,b \in L_{T}^{\infty} H_{x}^{1} \cap L_{T}^{2} H_{x}^{2}. 
\end{equation*} 
\end{proposition}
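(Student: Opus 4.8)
The plan is to establish Proposition \ref{Proposition 4.1} through an a priori $H^{1}(\mathbb{R}^{3})$ bound closed by Gr\"onwall's inequality, starting from the known low-regularity bounds $u,b\in L_{T}^{\infty}L_{x}^{2}\cap L_{T}^{2}H_{x}^{1}$. First I would test \eqref{est 1a} with $-\Delta u$ and \eqref{est 1b} with $-\Delta b$ in $L^{2}(\mathbb{R}^{3})$, integrate by parts, and add; since $\Delta u$ is divergence-free the pressure drops out, and this yields
\begin{equation*}
\frac{1}{2}\frac{d}{dt}(\lVert\nabla u\rVert_{L^{2}}^{2}+\lVert\nabla b\rVert_{L^{2}}^{2})+\lVert\Delta u\rVert_{L^{2}}^{2}+\lVert\Delta b\rVert_{L^{2}}^{2}=\mathrm{(A)}+\mathrm{(B)}+\mathrm{(C)}+\mathrm{(D)},
\end{equation*}
where $\mathrm{(A)}=\int(u\cdot\nabla)u\cdot\Delta u\,dx$, $\mathrm{(B)}$ collects the Lorentz-force/stretching pair $\int(b\cdot\nabla)b\cdot\Delta u\,dx+\int(b\cdot\nabla)u\cdot\Delta b\,dx$, $\mathrm{(C)}=\int(u\cdot\nabla)b\cdot\Delta b\,dx$, and $\mathrm{(D)}=\int\nabla\times(j\times b)\cdot\Delta b\,dx$ is the Hall contribution (signs tracked throughout). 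The goal is to dominate each of $\mathrm{(A)}$--$\mathrm{(D)}$ by a small multiple of the diffusion $\lVert\Delta u\rVert_{L^{2}}^{2}+\lVert\Delta b\rVert_{L^{2}}^{2}$ plus a term $g(t)(\lVert\nabla u\rVert_{L^{2}}^{2}+\lVert\nabla b\rVert_{L^{2}}^{2})$ with $g\in L_{T}^{1}$ built from the two norms in \eqref{est 49}.

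The heart of the argument is the Hall term $\mathrm{(D)}$, which is where the $H^{1}$-level predecessor \eqref{est 53} of Proposition \ref{Proposition 3.1} enters. I would invoke \eqref{est 53} to replace $\mathrm{(D)}$ by $\lesssim\int_{\mathbb{R}^{3}}\lvert\nabla b\rvert\lvert\nabla b_{h}\rvert\lvert\nabla^{2}b_{h}\rvert\,dx$, then apply H\"older's inequality placing $\nabla^{2}b_{h}$ in $L^{p_{2}}$, namely $\lVert\nabla b\rVert_{L^{a}}\lVert\nabla b_{h}\rVert_{L^{c}}\lVert\nabla^{2}b_{h}\rVert_{L^{p_{2}}}$ with $\tfrac{1}{a}+\tfrac{1}{c}+\tfrac{1}{p_{2}}=1$. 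Interpolating the first two factors by Gagliardo--Nirenberg between the energy level and $\lVert\Delta b\rVert_{L^{2}}$, and using $\lVert\nabla^{2}b_{h}\rVert_{L^{2}}\le\lVert\Delta b\rVert_{L^{2}}$ together with $\lVert\nabla b_{h}\rVert_{L^{2}}\le\lVert\nabla b\rVert_{L^{2}}$, produces a factor $\lVert\Delta b\rVert_{L^{2}}^{3/p_{2}}$; since $2\le p_{2}\le3$ forces $3/p_{2}<2$ and keeps all interpolation exponents admissible, Young's inequality absorbs the top order into the diffusion and leaves the coefficient $\lVert\nabla b\rVert_{L^{2}}^{2}\lVert\nabla^{2}b_{h}\rVert_{L^{p_{2}}}^{2p_{2}/(2p_{2}-3)}$. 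Its time integral is finite because \eqref{est 49b} places $\nabla^{2}b_{h}$ in $L_{T}^{r_{2}}L_{x}^{p_{2}}$ with $r_{2}\ge\tfrac{2p_{2}}{2p_{2}-3}$, so on the finite interval $[0,T]$ the borderline exponent is controlled. I expect this balancing---checking that the Gagliardo--Nirenberg exponents are admissible throughout $2\le p_{2}\le3$ and that the power produced is exactly the Serrin exponent allowed by \eqref{est 49b}---to be the main obstacle, and it is the step that genuinely requires the new cancellation rather than the crude bound $\lVert\nabla^{2}b_{h}\rVert_{L^{2}}\lesssim\lVert\Delta b\rVert_{L^{2}}$.

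For the remaining MHD-type terms $\mathrm{(A)}$--$\mathrm{(C)}$ I would follow the anisotropic Serrin-type framework of \cite{JL10}, which Theorem \ref{Theorem 2.1} extends, together with the techniques behind \eqref{est 98}--\eqref{est 99} and \cite{KZ06,KZ07,CT08}. After integrating by parts and repeatedly substituting the divergence-free relations $\partial_{3}u_{3}=-(\partial_{1}u_{1}+\partial_{2}u_{2})$ and $\partial_{3}b_{3}=-(\partial_{1}b_{1}+\partial_{2}b_{2})$ to trade vertical derivatives for horizontal ones, each top-order trilinear integral can be arranged so that the factor forced into a low-order Lebesgue space is either the horizontal velocity $u_{h}$ or a horizontal magnetic quantity controlled by $\nabla^{2}b_{h}$; H\"older, Gagliardo--Nirenberg and Young then absorb the two highest derivatives into $\lVert\Delta u\rVert_{L^{2}}^{2}+\lVert\Delta b\rVert_{L^{2}}^{2}$, leaving coefficients bounded by $\lVert u_{h}\rVert_{L^{p_{1}}}^{r_{1}}$ and $\lVert\nabla^{2}b_{h}\rVert_{L^{p_{2}}}^{r_{2}}$ under the Serrin relations in \eqref{est 49}. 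Collecting $\mathrm{(A)}$--$\mathrm{(D)}$, the differential inequality takes the form $\frac{d}{dt}(\lVert\nabla u\rVert_{L^{2}}^{2}+\lVert\nabla b\rVert_{L^{2}}^{2})+\lVert\Delta u\rVert_{L^{2}}^{2}+\lVert\Delta b\rVert_{L^{2}}^{2}\le g(t)(\lVert\nabla u\rVert_{L^{2}}^{2}+\lVert\nabla b\rVert_{L^{2}}^{2})$ with $g(t)\lesssim1+\lVert u_{h}\rVert_{L^{p_{1}}}^{r_{1}}+\lVert\nabla^{2}b_{h}\rVert_{L^{p_{2}}}^{r_{2}}\in L_{T}^{1}$ by \eqref{est 49}. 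Gr\"onwall's inequality then yields the $L_{T}^{\infty}H_{x}^{1}$ bound and, upon integrating in time, the $L_{T}^{2}H_{x}^{2}$ bound, completing the proof.
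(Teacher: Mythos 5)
Your proposal follows essentially the same route as the paper: testing with $(-\Delta u,-\Delta b)$, handling the Hall term via \eqref{est 53} with H\"older/Gagliardo--Nirenberg/Young to produce exactly the coefficient $\lVert\nabla^{2}b_{h}\rVert_{L^{p_{2}}}^{2p_{2}/(2p_{2}-3)}$, treating the convective terms by trading $\partial_{3}u_{3}$ and $\partial_{3}b_{3}$ for horizontal derivatives so that only $u_{h}$ in $L^{p_{1}}$ and quantities controlled by $\nabla^{2}b_{h}$ in $L^{p_{2}}$ appear, and closing with Gr\"onwall. This matches the paper's estimates \eqref{est 51}--\eqref{est 56}, so the approach is correct and not materially different.
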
 

\begin{proof}[Proof of Proposition \ref{Proposition 4.1}]
We take $L^{2}(\mathbb{R}^{3})$-inner products on \eqref{est 1a}-\eqref{est 1b} with $(-\Delta u, -\Delta b)$ to deduce 
\begin{equation}\label{est 54} 
 \frac{1}{2} \frac{d}{dt} ( \lVert \nabla u \rVert_{L^{2}}^{2} + \lVert \nabla b \rVert_{L^{2}}^{2}) +  \lVert \Delta u \rVert_{L^{2}}^{2} + \lVert \Delta b \rVert_{L^{2}}^{2} = \sum_{l=1}^{5} \RomanIII_{l}, 
\end{equation} 
where 
\begin{subequations}\label{est 50} 
\begin{align}
& \RomanIII_{1} \triangleq - \sum_{k=1}^{3} \int_{\mathbb{R}^{3}} (\partial_{k} u \cdot \nabla) u \cdot \partial_{k} u dx, \hspace{3mm}  \RomanIII_{2} \triangleq - \sum_{k=1}^{3}\int_{\mathbb{R}^{3}}  (\partial_{k} u \cdot\nabla) b\cdot \partial_{k} b dx, \\
& \RomanIII_{3} \triangleq \sum_{k=1}^{3}\int_{\mathbb{R}^{3}} (\partial_{k} b \cdot\nabla) b \cdot \partial_{k} u dx, \hspace{6mm}  \RomanIII_{4} \triangleq \sum_{k=1}^{3} \int_{\mathbb{R}^{3}} (\partial_{k} b\cdot\nabla) u \cdot\partial_{k} b dx, \\
& \RomanIII_{5} \triangleq -\int_{\mathbb{R}^{3}} \nabla\times (j\times b) \cdot \Delta b dx. 
\end{align}
\end{subequations}
First, we work on $\RomanIII_{1}$ and rewrite it from \eqref{est 50} as 
\begin{align}
\RomanIII_{1} =& -\sum_{i=1}^{2} \sum_{j,k=1}^{3} \int_{\mathbb{R}^{3}}\partial_{k} u_{i}\partial_{i} u_{j} \partial_{k} u_{j} dx \nonumber\\
& - \sum_{j=1}^{2}\sum_{k=1}^{3}\int_{\mathbb{R}^{3}}\partial_{k} u_{3} \partial_{3} u_{j} \partial_{k} u_{j} dx - \sum_{k=1}^{3} \int_{\mathbb{R}^{3}} \partial_{k} u_{3} \partial_{3} u_{3}\partial_{k} u_{3} dx \nonumber \\
=& -\sum_{i=1}^{2} \sum_{j,k=1}^{3} \int_{\mathbb{R}^{3}}\partial_{k} u_{i}\partial_{i} u_{j} \partial_{k} u_{j} dx  \nonumber \\
& - \sum_{j=1}^{2}\sum_{k=1}^{3}\int_{\mathbb{R}^{3}}\partial_{k} u_{3} \partial_{3} u_{j} \partial_{k} u_{j} dx + \sum_{k=1}^{3} \sum_{l=1}^{2} \int_{\mathbb{R}^{3}} \partial_{k} u_{3} \partial_{l} u_{l}\partial_{k} u_{3} dx  \label{est 138} 
\end{align}
where we used the divergence-free condition so that $\partial_{3}u_{3} = -\sum_{l=1}^{2}\partial_{l}u_{l}$. Thus, we are ready to integrate by parts, apply H$\ddot{\mathrm{o}}$lder's, Gagliardo-Nirenberg, and Young's inequalities to estimate 
\begin{equation}\label{est 51} 
\RomanIII_{1} \lesssim \int_{\mathbb{R}^{3}} \lvert u_{h} \rvert \lvert \nabla u \rvert \lvert \nabla^{2} u \rvert dx \lesssim\lVert u_{h} \rVert_{L^{p_{1}}} \lVert \nabla u \rVert_{L^{\frac{2p_{1}}{p_{1} -2}}} \lVert \Delta u \rVert_{L^{2}} \leq \frac{1}{8} \lVert \Delta u \rVert_{L^{2}}^{2} + C \lVert u_{h} \rVert_{L^{p_{1}}}^{\frac{2p_{1}}{p_{1} -3}} \lVert \nabla u \rVert_{L^{2}}^{2} 
\end{equation} 
where we understand $\frac{2p_{1}}{p_{1} - 2} = 2, \frac{2p_{1}}{p_{1} -3} = 2$ in case $p_{1} = \infty$. Similarly, we can rewrite from \eqref{est 50} 
\begin{align}
\RomanIII_{2} =&  - \sum_{i=1}^{2} \sum_{j,k=1}^{3} \int_{\mathbb{R}^{3}} \partial_{k} u_{i} \partial_{i} b_{j} \partial_{k} b_{j} dx \nonumber\\
&- \sum_{j=1}^{2}\sum_{k=1}^{3} \int_{\mathbb{R}^{3}}\partial_{k} u_{3} \partial_{3} b_{j} \partial_{k} b_{j} dx + \sum_{k=1}^{3}\sum_{l=1}^{2} \int_{\mathbb{R}^{3}} \partial_{k} u_{3} \partial_{l} b_{l}\partial_{k} b_{ 3} dx.
\end{align}
We integrate by parts, apply H$\ddot{\mathrm{o}}$lder's, Gagliardo-Nirenberg, and Young's inequalities to estimate, slightly differently from \eqref{est 51} as  
\begin{align}
\RomanIII_{2} \lesssim& \lVert u_{h} \rVert_{L^{p_{1}}} \lVert \nabla b \rVert_{L^{\frac{2p_{1}}{p_{1} -2}}} \lVert \Delta b \rVert_{L^{2}} + \lVert b_{h} \rVert_{L^{\infty}} ( \lVert \nabla u \rVert_{L^{2}} + \lVert \nabla b \rVert_{L^{2}}) ( \lVert \Delta u \rVert_{L^{2}} + \lVert \Delta b \rVert_{L^{2}}) \nonumber \\
\lesssim& \lVert u_{h} \rVert_{L^{p_{1}}} \lVert \nabla b \rVert_{L^{2}}^{\frac{p_{1} -3}{p_{1}}} \lVert \nabla^{2} b \rVert_{L^{2}}^{\frac{3}{p_{1}}} \lVert \Delta b \rVert_{L^{2}} \nonumber  \\
& + \lVert b_{h} \rVert_{L^{2}}^{\frac{2(2p_{2} -3)}{7p_{2} -6}} \lVert \nabla^{2} b_{h} \rVert_{L^{p_{2}}}^{\frac{3p_{2}}{7p_{2} - 6}} ( \lVert \nabla u \rVert_{L^{2}} + \lVert \nabla b \rVert_{L^{2}}) ( \lVert \Delta u \rVert_{L^{2}} + \lVert \Delta b \rVert_{L^{2}})  \nonumber \\
\leq& \frac{1}{8} ( \lVert \Delta u \rVert_{L^{2}}^{2} + \lVert \Delta b \rVert_{L^{2}}^{2}) + C ( \lVert u_{h} \rVert_{L^{p_{1}}}^{\frac{2p_{1}}{p_{1} -3}} + \lVert \nabla^{2} b_{h} \rVert_{L^{p_{2}}}^{\frac{6p_{2}}{7p_{2} - 6}} )  ( \lVert \nabla u\rVert_{L^{2}}^{2} + \lVert \nabla b \rVert_{L^{2}}^{2}). \label{est 52} 
\end{align}
Finally, we can rewrite $\RomanIII_{3} + \RomanIII_{4}$ from \eqref{est 50} together as 
\begin{align}
&\RomanIII_{3} + \RomanIII_{4} =  \sum_{i=1}^{2} \sum_{j,k=1}^{3}\int_{\mathbb{R}^{3}} \partial_{k} b_{i}\partial_{i} b_{j}\partial_{k} u_{j} + \partial_{k} b_{i} \partial_{i} u_{j} \partial_{k} b_{j}  dx \\
& + \sum_{j=1}^{2}\sum_{k=1}^{3}  \int_{\mathbb{R}^{3}} \partial_{k} b_{3}\partial_{3} b_{j} \partial_{k} u_{j} + \partial_{k} b_{3}\partial_{3} u_{j} \partial_{k} b_{j} dx  - \sum_{k=1}^{3} \sum_{l=1}^{2}  \int_{\mathbb{R}^{3}} \partial_{k} b_{3}\partial_{l} b_{l}\partial_{k} u_{3} + \partial_{k} b_{3}\partial_{l} u_{l}\partial_{k} b_{3} dx \nonumber 
\end{align}
and estimate identically to \eqref{est 52} 
\begin{equation}\label{est 55} 
 \RomanIII_{3} + \RomanIII_{4}  \leq \frac{1}{8} ( \lVert \Delta u \rVert_{L^{2}}^{2} + \lVert \Delta b \rVert_{L^{2}}^{2}) + C ( \lVert u_{h} \rVert_{L^{p_{1}}}^{\frac{2p_{1}}{p_{1} -3}} + \lVert \nabla^{2} b_{h} \rVert_{L^{p_{2}}}^{\frac{6p_{2}}{7p_{2} - 6}} )  ( \lVert \nabla u\rVert_{L^{2}}^{2} + \lVert \nabla b \rVert_{L^{2}}^{2}).
\end{equation} 
At last, we rely on \eqref{est 53} to handle the Hall term $\RomanIII_{5}$ as follows: via H$\ddot{\mathrm{o}}$lder's inequality, the Sobolev embedding $\dot{H}^{1}(\mathbb{R}^{3}) \hookrightarrow L^{6}(\mathbb{R}^{3})$, Gagliardo-Nirenberg and Young's inequalities 
\begin{equation}\label{est 56} 
\RomanIII_{5} \overset{\eqref{est 50} \eqref{est 53}}{\lesssim}   \lVert \nabla b \rVert_{L^{6}} \lVert \nabla b_{h} \rVert_{L^{\frac{6p_{2}}{5p_{2} -6}}} \lVert \nabla^{2} b_{h} \rVert_{L^{p_{2}}} \leq \frac{1}{8} \lVert \Delta b \rVert_{L^{2}}^{2} + C \lVert \nabla b \rVert_{L^{2}}^{2} \lVert \nabla^{2} b_{h} \rVert_{L^{p_{2}}}^{\frac{2p_{2}}{2p_{2} - 3}}.
\end{equation} 
Applying \eqref{est 51}, \eqref{est 52}, \eqref{est 55}, and \eqref{est 56} to \eqref{est 54} gives us 
\begin{align}
& \frac{1}{2} \frac{d}{dt} ( \lVert \nabla u \rVert_{L^{2}}^{2} + \lVert \nabla b \rVert_{L^{2}}^{2}) + \lVert \Delta u \rVert_{L^{2}}^{2} + \lVert \Delta b \rVert_{L^{2}}^{2}  \\
\leq& \frac{1}{2} ( \lVert \Delta u \rVert_{L^{2}}^{2} + \lVert \Delta b \rVert_{L^{2}}^{2}) + C( \lVert u_{h} \rVert_{L^{p_{1}}}^{\frac{2p_{1}}{p_{1} -3}}  + \lVert \nabla^{2} b_{h} \rVert_{L^{p_{2}}}^{\frac{6p_{2}}{7p_{2} - 6}} + \lVert \nabla^{2} b_{h} \rVert_{L^{p_{2}}}^{\frac{2p_{2}}{2p_{2} - 3}}) (\lVert \nabla u \rVert_{L^{2}}^{2} + \lVert \nabla b\rVert_{L^{2}}^{2}). \nonumber 
\end{align}
Due to \eqref{est 49}, $( \lVert u_{h} \rVert_{L^{p_{1}}}^{\frac{2p_{1}}{p_{1} -3}}  + \lVert \nabla^{2} b_{h} \rVert_{L^{p_{2}}}^{\frac{6p_{2}}{7p_{2} - 6}} + \lVert \nabla^{2} b_{h} \rVert_{L^{p_{2}}}^{\frac{2p_{2}}{2p_{2} - 3}}) \in L_{T}^{1}$ and thus Gronwall's inequality completes the proof of Proposition \ref{Proposition 4.1}. 
\end{proof} 
 
\begin{proposition}\label{Proposition 4.2} 
Under the hypothesis of Theorem \ref{Theorem 2.1}, suppose that $(u,b)$ is a smooth solution to the 3-D Hall-MHD system \eqref{est 1} over $[0,T]$. Then 
\begin{equation*}
u,b \in L_{T}^{\infty} H_{x}^{2} \cap L_{T}^{2} H_{x}^{3}. 
\end{equation*} 
\end{proposition}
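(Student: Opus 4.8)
The plan is to run the $H^{2}(\mathbb{R}^{3})$-energy estimate one order above Proposition \ref{Proposition 4.1}. Concretely, I would apply $\Delta$ to \eqref{est 1a}--\eqref{est 1b} and take $L^{2}(\mathbb{R}^{3})$-inner products with $\Delta u$ and $\Delta b$ respectively (equivalently, test the equations with $\Delta^{2}u,\Delta^{2}b$) to obtain
\begin{equation*}
\tfrac{1}{2}\tfrac{d}{dt}(\lVert \Delta u\rVert_{L^{2}}^{2}+\lVert \Delta b\rVert_{L^{2}}^{2})+\lVert \nabla\Delta u\rVert_{L^{2}}^{2}+\lVert \nabla\Delta b\rVert_{L^{2}}^{2}=\mathcal{N}+\mathcal{H},
\end{equation*}
where $\mathcal{N}$ collects the fluid nonlinearities from $(u\cdot\nabla)u$, $(u\cdot\nabla)b$, $(b\cdot\nabla)b$, $(b\cdot\nabla)u$, and $\mathcal{H}=-\int_{\mathbb{R}^{3}}\Delta\nabla\times(j\times b)\cdot\Delta b\,dx$ is the Hall contribution. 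Since Proposition \ref{Proposition 4.1} already furnishes $u,b\in L_{T}^{\infty}H_{x}^{1}\cap L_{T}^{2}H_{x}^{2}$, it suffices to close a Gronwall inequality for $\lVert \Delta u\rVert_{L^{2}}^{2}+\lVert \Delta b\rVert_{L^{2}}^{2}$ with the two dissipation terms absorbing the top-order output.

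The heart of the matter is $\mathcal{H}$, which is exactly the quantity bounded in Proposition \ref{Proposition 3.1}(1): by \eqref{est 58}, $\mathcal{H}\lesssim\int_{\mathbb{R}^{3}}\lvert\nabla^{2}b_{h}\rvert(\lvert\nabla b\rvert\lvert\nabla^{3}b\rvert+\lvert\nabla^{2}b_{v}\rvert\lvert\nabla^{2}b\rvert)\,dx$. For each term I would place $\nabla^{2}b_{h}$ in $L^{p_{2}}$ (the controlled factor from \eqref{est 49b}) and $\nabla^{3}b$ in $L^{2}$ (the dissipation) by H\"older, and then interpolate the remaining factor by Gagliardo--Nirenberg \emph{between $\lVert\Delta b\rVert_{L^{2}}$ and $\lVert\nabla^{3}b\rVert_{L^{2}}$}; the range $2\leq p_{2}\leq 3$ in \eqref{est 49b} is precisely what makes these interpolations admissible. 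The crucial bookkeeping is that in both terms this produces a power $\tfrac{3}{p_{2}}\leq\tfrac{3}{2}<2$ of $\lVert\nabla^{3}b\rVert_{L^{2}}$, so after Young's inequality absorbs $\tfrac{1}{4}\lVert\nabla^{3}b\rVert_{L^{2}}^{2}$ we are left with
\begin{equation*}
\mathcal{H}\leq\tfrac{1}{4}\lVert\nabla^{3}b\rVert_{L^{2}}^{2}+C\lVert\nabla^{2}b_{h}\rVert_{L^{p_{2}}}^{\frac{2p_{2}}{2p_{2}-3}}\lVert\Delta b\rVert_{L^{2}}^{2}.
\end{equation*}
The coefficient of the Gronwall quantity is therefore exactly $\lVert\nabla^{2}b_{h}\rVert_{L^{p_{2}}}^{2p_{2}/(2p_{2}-3)}$, which lies in $L_{T}^{1}$ iff $\tfrac{3}{p_{2}}+\tfrac{2}{r_{2}}\leq2$, i.e. under \eqref{est 49b}. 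Interpolating instead against $\lVert\nabla b\rVert_{L^{2}}$ would force the strictly stronger condition $\tfrac{3}{p_{2}}+\tfrac{4}{r_{2}}\leq2$, so hitting this critical threshold by calibrating the interpolation to the second-order norm is the main obstacle and the reason the estimate must be arranged this way.

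For $\mathcal{N}$ I would mimic the divergence-free manipulations of Proposition \ref{Proposition 4.1} (compare \eqref{est 138}) one derivative higher: integrating by parts and using $\nabla\cdot u=0$ to trade $\partial_{3}u_{3}$ for $-\sum_{l=1,2}\partial_{l}u_{l}$ (and likewise on the magnetic quadratics), every resulting integrand carries either $u_{h}$ or $b_{h}/\nabla^{2}b_{h}$ as its rough coefficient. The pure velocity term is then controlled by $\int\lvert u_{h}\rvert\lvert\nabla^{2}u\rvert\lvert\nabla^{3}u\rvert$, which by the identical H\"older--Gagliardo--Nirenberg--Young scheme (now interpolating $\nabla^{2}u$ between $\lVert\Delta u\rVert_{L^{2}}$ and $\lVert\nabla^{3}u\rVert_{L^{2}}$) yields $\tfrac{1}{8}\lVert\nabla^{3}u\rVert_{L^{2}}^{2}+C\lVert u_{h}\rVert_{L^{p_{1}}}^{2p_{1}/(p_{1}-3)}\lVert\Delta u\rVert_{L^{2}}^{2}$, whose coefficient is in $L_{T}^{1}$ exactly under \eqref{est 49a}; the mixed terms coupling both fields are handled the same way, borrowing the $L_{T}^{\infty}H^{1}\cap L_{T}^{2}H^{2}$ bounds of Proposition \ref{Proposition 4.1} for the lower-order factors.

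Collecting these estimates, absorbing all top-order dissipation, and applying Gronwall's inequality with the coefficient $\lVert u_{h}\rVert_{L^{p_{1}}}^{2p_{1}/(p_{1}-3)}+\lVert\nabla^{2}b_{h}\rVert_{L^{p_{2}}}^{2p_{2}/(2p_{2}-3)}\in L_{T}^{1}$ (guaranteed by \eqref{est 49}) then yields $u,b\in L_{T}^{\infty}H_{x}^{2}\cap L_{T}^{2}H_{x}^{3}$, as claimed. The genuinely delicate point, as flagged above, is the Hall term $\mathcal{H}$: without the cancellations of Proposition \ref{Proposition 3.1} one would confront an uncancelled $\nabla^{3}b$-level contribution with no horizontal localization, and even granted \eqref{est 58} the interpolation must be tuned to the $\lVert\Delta b\rVert_{L^{2}}$ norm to reach the critical exponent $2p_{2}/(2p_{2}-3)$ matching \eqref{est 49b}.
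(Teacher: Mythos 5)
Your treatment of the Hall term is correct and coincides with the paper's: starting from \eqref{est 58}, placing $\nabla^{2}b_{h}$ in $L^{p_{2}}$, interpolating the remaining factors between $\lVert\Delta b\rVert_{L^{2}}$ and $\lVert\nabla^{3}b\rVert_{L^{2}}$, and absorbing by Young's inequality yields exactly the coefficient $\lVert\nabla^{2}b_{h}\rVert_{L^{p_{2}}}^{2p_{2}/(2p_{2}-3)}$ of \eqref{est 60}, which is integrable in time precisely under \eqref{est 49b}. Your observation that the interpolation must be calibrated to $\lVert\Delta b\rVert_{L^{2}}$ rather than $\lVert\nabla b\rVert_{L^{2}}$ is also the right one.

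The gap is in your treatment of $\mathcal{N}$. You assert that the divergence-free manipulations of \eqref{est 138} carry over ``one derivative higher'' so that, e.g., the pure velocity contribution reduces to $\int_{\mathbb{R}^{3}}\lvert u_{h}\rvert\lvert\nabla^{2}u\rvert\lvert\nabla^{3}u\rvert\,dx$, but this is not a routine extension and you give no computation. At the $H^{2}$ level the integrands are of the schematic form $\nabla^{2}u\cdot\nabla u\cdot\nabla^{2}u$ and $\nabla u\cdot\nabla^{2}u\cdot\nabla^{2}u$; stripping the distinguished factor all the way down to an undifferentiated $u_{h}$ requires two integrations by parts, which generically deposits both derivatives on the factor $\nabla u$ appearing in terms like $\int\Delta u_{i}\,\partial_{i}u_{j}\,\Delta u_{j}$ and produces $\int\lvert u_{h}\rvert\lvert\nabla u\rvert\lvert\nabla^{4}u\rvert\,dx$, which the $L_{T}^{2}\dot{H}_{x}^{3}$ dissipation cannot absorb; stopping after one integration by parts instead leaves a coefficient $\lvert\nabla u_{h}\rvert$, which the hypothesis \eqref{est 49a} on $u_{h}$ alone does not control. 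The paper itself flags (Subsection \ref{Subsection 1.3}) that no such component-reduced cancellation is known for the $H^{2}$-estimate even of the 3-D NS equations, and accordingly its proof of Proposition \ref{Proposition 4.2} does not attempt one: it uses neither \eqref{est 49a} nor \eqref{est 49b} on $\RomanIV_{1}$, instead rewriting $\RomanIV_{1}$ as commutators, applying the Kato--Ponce estimate to get the bound \eqref{est 59} with Gronwall coefficient $(\lVert\Delta u\rVert_{L^{2}}^{2}+\lVert\Delta b\rVert_{L^{2}}^{2})^{1/2}$, and observing that this is already in $L_{T}^{1}$ by Proposition \ref{Proposition 4.1}. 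Once the $H^{1}$ bound is in hand, the component hypotheses are only needed for the Hall term; your plan makes the fluid part both harder than necessary and, as sketched, not closable.
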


\begin{proof}[Proof of Proposition \ref{Proposition 4.2}]
We apply $\Delta$ on the Hall-MHD system \eqref{est 1} and take $L^{2}(\mathbb{R}^{3})$-inner products with $(\Delta  u, \Delta b)$ to obtain 
\begin{equation}\label{est 61 star} 
 \frac{1}{2} \frac{d}{dt} ( \lVert \Delta u \rVert_{L^{2}}^{2} + \lVert \Delta b \rVert_{L^{2}}^{2}) + \lVert \Delta \nabla u \rVert_{L^{2}}^{2} + \lVert \Delta \nabla b \rVert_{L^{2}}^{2} = \sum_{i=1}^{2} \RomanIV_{i} 
\end{equation} 
where 
\begin{subequations}\label{est 37} 
\begin{align}
 \RomanIV_{1} \triangleq&  -\int_{\mathbb{R}^{3}} \Delta [ ( u\cdot\nabla) u] \cdot \Delta u + \Delta [(u\cdot\nabla) b] \cdot \Delta b \nonumber\\
 & \hspace{10mm} - \Delta [(b\cdot\nabla) b] \cdot \Delta u - \Delta [(b\cdot\nabla) u] \cdot \Delta b dx, \\
\RomanIV_{2} \triangleq& -\int_{\mathbb{R}^{3}} \Delta \nabla \times (j\times b) \cdot \Delta b dx. 
\end{align}
\end{subequations} 
We can estimate via H$\ddot{\mathrm{o}}$lder's inequality, the Sobolev embedding $\dot{H}^{1}(\mathbb{R}^{3}) \hookrightarrow L^{6}(\mathbb{R}^{3})$, the Kato-Ponce commutator estimate \cite{KP88}, and Young's inequality 
\begin{align}
\RomanIV_{1} \overset{\eqref{est 37}}{=}& \int_{\mathbb{R}^{3}} [\Delta [(u\cdot\nabla) u] - (u\cdot\nabla) \Delta u ] \cdot \Delta u + [\Delta [(u\cdot\nabla) b] - (u\cdot\nabla) \Delta b] \cdot \Delta b \nonumber \\
&\hspace{3mm}  - [\Delta [(b\cdot\nabla) b] - (b\cdot\nabla) \Delta b] \cdot \Delta u - [\Delta [ (b\cdot\nabla) u] - (b\cdot\nabla) \Delta u] \cdot \Delta b dx \nonumber \\
\lesssim& ( \lVert \nabla u \rVert_{L^{3}} + \lVert \nabla b \rVert_{L^{3}}) ( \lVert \Delta u \rVert_{L^{2}} + \lVert \Delta b \rVert_{L^{2}}) ( \lVert \Delta \nabla u \rVert_{L^{2}} + \lVert \Delta \nabla b \rVert_{L^{2}}) \nonumber  \\
\lesssim& ( \lVert \nabla u \rVert_{L^{2}} + \lVert \nabla b \rVert_{L^{2}})^{\frac{1}{2}} ( \lVert \Delta u \rVert_{L^{2}} + \lVert \Delta b \rVert_{L^{2}})^{\frac{3}{2}} ( \lVert \Delta \nabla u \rVert_{L^{2}} + \lVert \Delta \nabla b \rVert_{L^{2}}) \nonumber  \\
\leq& \frac{1}{4} ( \lVert \Delta \nabla u \rVert_{L^{2}}^{2} + \lVert \Delta \nabla b \rVert_{L^{2}}^{2}) + C  ( \lVert \Delta u \rVert_{L^{2}}^{2} + \lVert \Delta b \rVert_{L^{2}}^{2})^{\frac{3}{2}}  \label{est 59} 
\end{align} 
where we used that $u,b \in L_{T}^{\infty} H_{x}^{1}$ from Proposition \ref{Proposition 4.1}. On the other hand, for the Hall term, we rely on Proposition \ref{Proposition 3.1} to deduce via H$\ddot{\mathrm{o}}$lder's, Gagliardo-Nirenberg, and Young's inequalities to estimate 
\begin{align}
\RomanIV_{2}\overset{\eqref{est 37} \eqref{est 58}}{\lesssim}& \int_{\mathbb{R}^{3}} \lvert \nabla^{2} b_{h} \rvert ( \lvert \nabla b \rvert \lvert \nabla^{3} b \rvert + \lvert \nabla^{2} b_{v} \rvert \lvert \nabla^{2} b \rvert) dx  \nonumber\\
\lesssim& \lVert \nabla^{2} b_{h} \rVert_{L^{p_{2}}} ( \lVert \nabla b \rVert_{L^{\frac{2p_{2}}{p_{2}-2}}} \lVert \nabla^{3} b \rVert_{L^{2}} + \lVert \nabla^{2} b \rVert_{L^{\frac{2p_{2}}{p_{2}-1}}}^{2}) \nonumber \\
\lesssim&  \lVert \nabla^{2}b_{h} \rVert_{L^{p_{2}}} \lVert \nabla^{2} b \rVert_{L^{2}}^{\frac{2p_{2}-3}{p_{2}}} \lVert \nabla^{3} b \rVert_{L^{2}}^{\frac{3}{p_{2}}}  \leq \frac{1}{4} \lVert \Delta \nabla b \rVert_{L^{2}}^{2} + C \lVert \nabla^{2} b_{h} \rVert_{L^{p_{2}}}^{\frac{2p_{2}}{2p_{2}-3}} \lVert \Delta b \rVert_{L^{2}}^{2}. \label{est 60} 
\end{align}
Applying \eqref{est 59}-\eqref{est 60} to \eqref{est 61 star} gives us 
\begin{align*}
&\frac{d}{dt} ( \lVert \Delta u \rVert_{L^{2}}^{2} + \lVert \Delta b \rVert_{L^{2}}^{2}) + \lVert \Delta \nabla u \rVert_{L^{2}}^{2} + \lVert \Delta \nabla b \rVert_{L^{2}}^{2} \nonumber\\
\leq&  C( \lVert \Delta u \rVert_{L^{2}} + \lVert \Delta b \rVert_{L^{2}} + \lVert \nabla^{2} b_{h} \rVert_{L^{p_{2}}}^{\frac{2p_{2}}{2p_{2} -3}})( \lVert \Delta u \rVert_{L^{2}}^{2} + \lVert \Delta b \rVert_{L^{2}}^{2}).
\end{align*} 
By Proposition \ref{Proposition 4.1} we know that $\lVert \Delta u \rVert_{L^{2}} + \lVert \Delta b \rVert_{L^{2}} \in L_{T}^{1}$ while $\lVert \nabla^{2} b_{h} \rVert_{L^{p_{2}}}^{\frac{2p_{2}}{2p_{2} -3}} \in L_{T}^{1}$ due to \eqref{est 49b} and thus Gronwall's inequality completes the proof. 
\end{proof} 

\section{Proof of Theorem \ref{Theorem 2.2}}\label{Section 5}
Taking an $L^{2}(\mathbb{R}^{2})$-inner products on \eqref{est 63} with $b$ leads us to 
\begin{equation}\label{est 89}
\sup_{t \in [0,T]} \lVert b(t) \rVert_{L_{x}^{2}} + \int_{0}^{T} \lVert \Lambda^{\frac{3}{2}} b_{h} \rVert_{L^{2}}^{2} + \lVert \Lambda^{\alpha} b_{v} \rVert_{L^{2}}^{2} ds \leq \lVert b^{\text{in}} \rVert_{L^{2}}^{2}. 
\end{equation} 
Now using \eqref{est 38}, as we discussed in \eqref{est 129}-\eqref{est 139}, we know that we can get an $H^{1}(\mathbb{R}^{2})$-bound for $b$ that solves the electron MHD system \eqref{est 63}. Instead, due to \eqref{est 86} from Proposition \ref{Proposition 3.1}, we are able to deduce the $H^{2}(\mathbb{R}^{2})$-bound immediately as follows. 

\begin{proposition}\label{Proposition 5.1} 
Under the hypothesis of Theorem \ref{Theorem 2.2}, suppose that $b$ is a smooth solution to the electron MHD system \eqref{est 63} over $[0,T]$. Then 
\begin{equation}\label{est 93}
b \in L_{T}^{\infty} H_{x}^{2}, \hspace{3mm}  b_{h} \in L_{T}^{2} H_{x}^{\frac{7}{2}}, \hspace{3mm} b_{v} \in L_{T}^{2} H_{x}^{2+ \alpha}. 
\end{equation} 
\end{proposition}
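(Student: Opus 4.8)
The plan is to derive the $\dot H^{2}(\mathbb{R}^{2})$-bound and combine it with the energy inequality \eqref{est 89}. First I would apply $\Delta$ to \eqref{est 63} and take the $L^{2}(\mathbb{R}^{2})$-inner product with $\Delta b$. Since $(-\Delta)^{\frac{3}{2}}$ acts only on $b_{h}$ and $(-\Delta)^{\alpha}$ only on $b_{v}$, the two diffusion terms produce the nonnegative quantities $\lVert \Lambda^{\frac{7}{2}} b_{h}\rVert_{L^{2}}^{2}$ and $\lVert \Lambda^{2+\alpha} b_{v}\rVert_{L^{2}}^{2}$, so that
\[
\frac{1}{2}\frac{d}{dt}\lVert \Delta b\rVert_{L^{2}}^{2} + \lVert \Lambda^{\frac{7}{2}} b_{h}\rVert_{L^{2}}^{2} + \lVert \Lambda^{2+\alpha} b_{v}\rVert_{L^{2}}^{2} = -\int_{\mathbb{R}^{2}} \Delta \nabla \times (j\times b)\cdot \Delta b \, dx .
\]
The entire difficulty is concentrated in the Hall term on the right, which is precisely the object governed by the cancellation estimate \eqref{est 86}.

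Second, I would invoke \eqref{est 86} to bound the right-hand side by $\int_{\mathbb{R}^{2}} ( \lvert \nabla b_{h}\rvert \lvert \nabla^{3} b_{h}\rvert + \lvert \nabla^{2} b_{h}\rvert^{2}) \lvert \nabla^{2} b_{v}\rvert \, dx$ and estimate each of the two resulting integrals by H\"older's inequality with exponents $(4,4,2)$, the $2$-dimensional Sobolev embedding $\dot H^{\frac{1}{2}}(\mathbb{R}^{2})\hookrightarrow L^{4}(\mathbb{R}^{2})$, a Gagliardo--Nirenberg interpolation, and Young's inequality. Concretely, in the first integral I would place $\nabla^{2} b_{v}$ in $L^{2}$ (bounded by $\lVert \Delta b\rVert_{L^{2}}$) and write $\lVert \nabla b_{h}\rVert_{L^{4}}\lesssim \lVert \Lambda^{\frac{3}{2}} b_{h}\rVert_{L^{2}}$ together with $\lVert \nabla^{3} b_{h}\rVert_{L^{4}}\lesssim \lVert \Lambda^{\frac{7}{2}} b_{h}\rVert_{L^{2}}$; in the second I would use $\lVert \nabla^{2} b_{h}\rVert_{L^{4}}\lesssim \lVert \Lambda^{\frac{5}{2}} b_{h}\rVert_{L^{2}}\lesssim \lVert \Lambda^{\frac{3}{2}} b_{h}\rVert_{L^{2}}^{\frac{1}{2}}\lVert \Lambda^{\frac{7}{2}} b_{h}\rVert_{L^{2}}^{\frac{1}{2}}$. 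In both cases the single top-order factor $\lVert \Lambda^{\frac{7}{2}} b_{h}\rVert_{L^{2}}$ is absorbed into the dissipation on the left, and the remainder takes the form $C\lVert \Lambda^{\frac{3}{2}} b_{h}\rVert_{L^{2}}^{2}\lVert \Delta b\rVert_{L^{2}}^{2}$.

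Third, this yields a differential inequality of the type $\frac{d}{dt}\lVert \Delta b\rVert_{L^{2}}^{2} + \lVert \Lambda^{\frac{7}{2}} b_{h}\rVert_{L^{2}}^{2} + \lVert \Lambda^{2+\alpha} b_{v}\rVert_{L^{2}}^{2} \leq C\lVert \Lambda^{\frac{3}{2}} b_{h}\rVert_{L^{2}}^{2}\lVert \Delta b\rVert_{L^{2}}^{2}$, in which the coefficient $\lVert \Lambda^{\frac{3}{2}} b_{h}\rVert_{L^{2}}^{2}$ lies in $L_{T}^{1}$ precisely by \eqref{est 89}. Gronwall's inequality then gives $\lVert \Delta b\rVert_{L_{T}^{\infty} L^{2}} < \infty$, and feeding this back into the time integral of the left-hand side gives $\int_{0}^{T} \lVert \Lambda^{\frac{7}{2}} b_{h}\rVert_{L^{2}}^{2} + \lVert \Lambda^{2+\alpha} b_{v}\rVert_{L^{2}}^{2}\, dt < \infty$. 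Together with \eqref{est 89} (which supplies $b\in L_{T}^{\infty} L^{2}$ and the lower-order pieces of the two $L_{T}^{2}$ statements), this is exactly \eqref{est 93}.

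Finally, I expect the main obstacle to be arranging the distribution of derivatives so that the estimate closes rather than the individual inequalities, which are routine. The decisive features are built into \eqref{est 86}: it separates $b_{h}$ twice and keeps the derivative orders balanced, so that the single genuinely dangerous factor $\nabla^{3} b_{h}$ can be matched with the strong horizontal dissipation $\Lambda^{\frac{7}{2}} b_{h}$ while the low-order $b_{h}$-factor is charged to the time-integrable energy quantity $\lVert \Lambda^{\frac{3}{2}} b_{h}\rVert_{L^{2}}^{2}$; the absence of any bare ``$\lvert b_{h}\rvert$'' term in \eqref{est 86} is what makes this accounting possible. It is worth emphasizing that the vertical diffusion is never used to close the estimate, since $\nabla^{2} b_{v}$ is always absorbed into $\lVert \Delta b\rVert_{L^{2}}$, so the $L_{T}^{2} H^{2+\alpha}$ control of $b_{v}$ is obtained simply by retaining its dissipation term on the left-hand side.
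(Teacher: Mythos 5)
Your proposal is correct and follows essentially the same route as the paper: the $\dot H^{2}$-energy identity, the cancellation bound \eqref{est 86}, the $(4,4,2)$-H\"older split with $\dot H^{\frac{1}{2}}(\mathbb{R}^{2})\hookrightarrow L^{4}(\mathbb{R}^{2})$ and the interpolation $\lVert \Lambda^{\frac{5}{2}} b_{h}\rVert_{L^{2}}\lesssim \lVert \Lambda^{\frac{3}{2}} b_{h}\rVert_{L^{2}}^{\frac{1}{2}}\lVert \Lambda^{\frac{7}{2}} b_{h}\rVert_{L^{2}}^{\frac{1}{2}}$, absorption of the single $\lVert \Lambda^{\frac{7}{2}} b_{h}\rVert_{L^{2}}$ factor, and Gronwall with the $L_{T}^{1}$ coefficient $\lVert \Lambda^{\frac{3}{2}} b_{h}\rVert_{L^{2}}^{2}$ from \eqref{est 89}. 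This matches the paper's \eqref{est 87}--\eqref{est 88} step for step.
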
 

\begin{proof}[Proof of Proposition \ref{Proposition 5.1} ]
Applying $\Delta$ to \eqref{est 63}, multiplying the resulting equation with $\Delta b$, and integrating over $\mathbb{R}^{2}$ give us 
\begin{align}\label{est 87}
\frac{1}{2} \frac{d}{dt} \lVert \Delta b \rVert_{L^{2}}^{2} + \lVert \Lambda^{\frac{7}{2}} b_{h} \rVert_{L^{2}}^{2} + \lVert \Lambda^{2+ \alpha} b_{v} \rVert_{L^{2}}^{2} 
=& - \int_{\mathbb{R}^{2}} \Delta \nabla \times (j\times b) \cdot \Delta b dx \nonumber\\
 \overset{\eqref{est 86}}{\lesssim}&  \int_{\mathbb{R}^{2}} ( \lvert \nabla b_{h} \rvert \lvert \nabla^{3} b_{h} \rvert + \lvert \nabla^{2} b_{h} \rvert^{2}) \lvert \nabla^{2} b_{v} \rvert dx. 
\end{align} 
We can continue to bound from \eqref{est 87} by H$\ddot{\mathrm{o}}$lder's inequality, the Sobolev embedding $\dot{H}^{\frac{1}{2}} (\mathbb{R}^{2}) \hookrightarrow L^{4} (\mathbb{R}^{2})$, Gagliardo-Nirenberg and Young's inequalities, 
\begin{align}
&\frac{1}{2} \frac{d}{dt} \lVert \Delta b \rVert_{L^{2}}^{2} + \lVert \Lambda^{\frac{7}{2}} b_{h} \rVert_{L^{2}}^{2} + \lVert \Lambda^{2+ \alpha} b_{v} \rVert_{L^{2}}^{2}   \nonumber\\
\lesssim&( \lVert \nabla b_{h} \rVert_{L^{4}} \lVert \nabla^{3} b_{h} \rVert_{L^{4}} + \lVert \nabla^{2} b_{h} \rVert_{L^{4}}^{2}) \lVert \Delta b \rVert_{L^{2}}  \nonumber \\
\lesssim& \lVert \Lambda^{\frac{3}{2}} b_{h} \rVert_{L^{2}} \lVert \Lambda^{\frac{7}{2}} b_{h} \rVert_{L^{2}} \lVert \Delta b \rVert_{L^{2}}  \leq \frac{1}{2} \lVert \Lambda^{\frac{7}{2}} b_{h} \rVert_{L^{2}}^{2} + C \lVert \Lambda^{\frac{3}{2}} b_{h} \rVert_{L^{2}}^{2} \lVert \Delta b \rVert_{L^{2}}^{2}. \label{est 88}
\end{align}
Subtracting $\frac{1}{2} \lVert \Lambda^{\frac{7}{2}} b_{h} \rVert_{L^{2}}^{2}$ from both sides of \eqref{est 88} and applying Gronwall's inequality complete the proof. 
\end{proof} 

With Proposition \ref{Proposition 5.1} in hand, we are ready to prove the $H^{3}(\mathbb{R}^{2})$-bound of the solution $b$ to the electron MHD system \eqref{est 63}. We apply $\partial_{m} \partial_{k} \partial_{l}$ for $m,k,l \in\{1,2\}$ on \eqref{est 63}, take $L^{2}(\mathbb{R}^{2})$-inner products with $\partial_{m}\partial_{k}\partial_{l} b$ and then sum over $m,k,l \in \{1,2\}$  to obtain 
\begin{align*}
\frac{1}{2} \frac{d}{dt} \lVert b \rVert_{\dot{H}^{3}}^{2} + \lVert \Lambda^{\frac{3}{2}} b_{h} \rVert_{\dot{H}^{3}}^{2} + \lVert \Lambda^{\alpha} b_{v} \rVert_{\dot{H}^{3}}^{2} = - \sum_{m,k,l=1}^{2} \int_{\mathbb{R}^{2}} \partial_{m}\partial_{k}\partial_{l}  (j\times b) \cdot \partial_{m}\partial_{k}\partial_{l} j dx
\end{align*} 
where using \eqref{est 3}, we see that 
\begin{equation}\label{est 96} 
\frac{1}{2} \frac{d}{dt} \lVert b \rVert_{\dot{H}^{3}}^{2} + \lVert \Lambda^{\frac{3}{2}} b_{h} \rVert_{\dot{H}^{3}}^{2} + \lVert \Lambda^{\alpha} b_{v} \rVert_{\dot{H}^{3}}^{2}  = \sum_{l=1}^{2} \RomanVII_{l}
\end{equation} 
where 
\begin{subequations}\label{est 90} 
\begin{align}
\RomanVII_{1} \triangleq -\sum_{m,k,l=1}^{2}\int_{\mathbb{R}^{2}} &[ \partial_{k}\partial_{l} j \times \partial_{m} b  \nonumber\\
&+ \partial_{m}\partial_{l} j \times \partial_{k} b + \partial_{m}\partial_{k} j \times \partial_{l} b + j \times \partial_{m}\partial_{k}\partial_{l} b] \cdot \partial_{m}\partial_{k}\partial_{l} j dx, \\
\RomanVII_{2} \triangleq -\sum_{m,k,l=1}^{2} \int_{\mathbb{R}^{2}} &[\partial_{l} j \times \partial_{m}\partial_{k} b + \partial_{k} j \times \partial_{m}\partial_{l} b + \partial_{m} j \times \partial_{k}\partial_{l} b] \cdot \partial_{m}\partial_{k}\partial_{l} j dx. 
\end{align}
\end{subequations} 
Before we start our estimates, we recall a standard inequality 
\begin{equation}\label{est 91} 
\lVert \Lambda^{1-\alpha} (fg) \rVert_{L^{2}}^{2} \lesssim \lVert (\Lambda^{1-\alpha} f) g \rVert_{L^{2}}^{2} + \lVert f (\Lambda^{1-\alpha} g ) \rVert_{L^{2}}^{2} 
\end{equation} 
as $\alpha > 0$, that can be proven by merely applying Plancherel theorem and triangle inequality. Now because $\alpha > \frac{1}{2}$ by hypothesis of Theorem \ref{Theorem 2.2}, we can find $\epsilon > 0$ such that 
\begin{equation}\label{est 140} 
\alpha > \frac{1}{2} + \epsilon 
\end{equation} 
and use H$\ddot{\mathrm{o}}$lder's and Gagliardo-Nirenberg inequalities to deduce
\begin{equation}\label{est 92} 
\RomanVII_{1} \overset{\eqref{est 90}\eqref{est 91}}{\lesssim} [ \lVert \Lambda^{1-\alpha} D^{3} b \rVert_{L^{\frac{1}{1- \alpha + \epsilon}}} \lVert Db \rVert_{L^{\frac{1}{\alpha - \frac{1}{2} - \epsilon}}} + \lVert D^{3} b \rVert_{L^{2}} \lVert \Lambda^{1-\alpha} Db \rVert_{L^{\infty}} ] \lVert b \rVert_{\dot{H}^{3+\alpha}}. 
\end{equation} 
Now $\frac{1}{2} < \alpha$ justifies the Sobolev embedding $H^{2+ \alpha}(\mathbb{R}^{2}) \hookrightarrow W^{2-\alpha,\infty}(\mathbb{R}^{2})$. We can also use the Sobolev embedding $H^{2}(\mathbb{R}^{2}) \hookrightarrow \dot{W}^{1, \frac{1}{\alpha - \frac{1}{2} - \epsilon}} (\mathbb{R}^{2})$, Gagliardo-Nirenberg and Young's inequalities to continue to bound from \eqref{est 92} by 
\begin{align}
\RomanVII_{1} \lesssim& ( \lVert b \rVert_{\dot{H}^{2}}^{\frac{2\epsilon}{1+ \alpha}} \lVert b \rVert_{\dot{H}^{3+ \alpha}}^{1- \frac{2\epsilon}{1+ \alpha}} \lVert b \rVert_{H^{2}} + \lVert b \rVert_{\dot{H}^{3}} \lVert b \rVert_{H^{2+ \alpha}}) \lVert b \rVert_{\dot{H}^{3+ \alpha}} \label{est 94} \\ 
\overset{\eqref{est 93}}{\lesssim}&  (\lVert b \rVert_{\dot{H}^{3+ \alpha}}^{1- \frac{2\epsilon}{1+ \alpha}}  + \lVert b \rVert_{\dot{H}^{3}} \lVert b \rVert_{H^{2+ \alpha}}) \lVert b \rVert_{\dot{H}^{3+ \alpha}} \leq \frac{1}{4} (\lVert \Lambda^{\frac{3}{2}} b_{h} \rVert_{\dot{H}^{3}}^{2} + \lVert \Lambda^{\alpha} b_{v} \rVert_{\dot{H}^{3}}^{2}) + C(1+ \lVert b \rVert_{\dot{H}^{3}}^{2} \lVert b \rVert_{H^{2+ \alpha}}^{2}). \nonumber 
\end{align} 
Next, we estimate by H$\ddot{\mathrm{o}}$lder's inequality, the Sobolev embedding $\dot{H}^{\alpha}(\mathbb{R}^{2}) \hookrightarrow L^{\frac{2}{1-\alpha}}(\mathbb{R}^{2})$, Gagliardo-Nirenberg and Young's inequalities  
\begin{align}
\RomanVII_{2} \overset{\eqref{est 90} \eqref{est 91}}{\lesssim}& \lVert \Lambda^{1-\alpha} D^{2} b \rVert_{L^{\frac{2}{1-\alpha}}} \lVert D^{2} b \rVert_{L^{\frac{2}{\alpha}}}\lVert b \rVert_{\dot{H}^{3+\alpha}}  \label{est 95} \\
\lesssim& \lVert b \rVert_{\dot{H}^{3}} \lVert b \rVert_{\dot{H}^{2}}^{(\alpha - \frac{1}{2}) \frac{2}{\alpha}} \lVert D^{2+ \alpha} b \rVert_{L^{2}}^{1- (\alpha - \frac{1}{2}) \frac{2}{\alpha}} \lVert b \rVert_{\dot{H}^{3+ \alpha}} \nonumber\\
\overset{\eqref{est 93}}{\leq}& \frac{1}{4}  ( \lVert \Lambda^{\frac{3}{2}} b_{h} \rVert_{\dot{H}^{3}}^{2} + \lVert \Lambda^{\alpha} b_{v} \rVert_{\dot{H}^{3}}^{2}) + C(1+ \lVert b \rVert_{\dot{H}^{3}}^{2} \lVert \Lambda^{2+ \alpha} b \rVert_{L^{2}}^{2( 1- (\alpha - \frac{1}{2})\frac{2}{\alpha})}). \nonumber 
\end{align}
Applying \eqref{est 94} and \eqref{est 95} to \eqref{est 96} gives us 
\begin{equation}\label{est 164} 
\sum_{l=1}^{2} \RomanVII_{l} \overset{\eqref{est 94} \eqref{est 95}}{\leq} \frac{1}{2} ( \lVert \Lambda^{\frac{3}{2}} b_{h} \rVert_{\dot{H}^{3}}^{2} + \lVert \Lambda^{\alpha} b_{v} \rVert_{\dot{H}^{3}}^{2}) + C(1+ \lVert b \rVert_{\dot{H}^{3}}^{2}) (1 + \lVert b \rVert_{\dot{H}^{2+ \alpha}}^{2}). 
\end{equation} 
Now Gronwall's inequality completes the proof of Theorem \ref{Theorem 2.2} due to Proposition \ref{Proposition 5.1}. 

\section{Proof of Theorem \ref{Theorem 2.3}}\label{Section 6}   
Taking an $L^{2}(\mathbb{R}^{2})$-inner products on \eqref{est 151} with $(u,b)$ leads us to 
\begin{align}
& \sup_{t \in [0,T]} ( \lVert u(t) \rVert_{L^{2}}^{2} + \lVert b(t) \rVert_{L^{2}}^{2})  \nonumber \\
&+ \int_{0}^{T} \lVert u_{h} \rVert_{\dot{H}^{\alpha}}^{2} + \lVert u_{v} \rVert_{\dot{H}^{1}}^{2} + \lVert b_{h} \rVert_{\dot{H}^{\frac{3}{2}}}^{2} + \lVert b_{v} \rVert_{\dot{H}^{\alpha}}^{2} ds \leq \lVert u^{\text{in}} \rVert_{L^{2}}^{2} + \lVert b^{\text{in}} \rVert_{L^{2}}^{2}. \label{est 119} 
\end{align}

\begin{proposition}\label{Proposition 6.1} 
Under the hypothesis of Theorem \ref{Theorem 2.3}, suppose that $(u,b)$ is a smooth solution to \eqref{est 151} over $[0,T]$. Then 
\begin{equation}\label{est 120}
u_{h} \in L_{T}^{\infty} H_{x}^{1} \cap L_{T}^{2} H_{x}^{1+ \alpha}.
\end{equation} 
\end{proposition}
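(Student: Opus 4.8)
The plan is to reduce the bound on $u_h$ to an $L^2$-estimate for the vertical vorticity $\omega_3 \triangleq \partial_1 u_2 - \partial_2 u_1$, exploiting the cancellations \eqref{est 170} announced in Remark \ref{Remark 2.2}(1). Taking the third component of the curl of \eqref{est 151a} and using the standard identity together with \eqref{est 170a}, the stretching term $(\omega\cdot\nabla)u_3$ drops out, the viscous term reduces to $(-\Delta)^{\alpha}\omega_3$ (the $-\Delta u_v$ piece contributes nothing to $\omega_3$), and a short computation using $\nabla\cdot b = 0$ together with \eqref{est 150} shows the Lorentz force produces exactly $(b\cdot\nabla)j_3$; hence
\begin{equation*}
\partial_t \omega_3 + (u\cdot\nabla)\omega_3 + (-\Delta)^{\alpha}\omega_3 = (b\cdot\nabla) j_3 .
\end{equation*}
Likewise, the third component of \eqref{est 151b}, using that $(-\Delta)^{\frac32}b_h$ has vanishing third entry, that $[\nabla\times(j\times b)]_3 = (b\cdot\nabla)j_3$ (again by \eqref{est 150}), and \eqref{est 170b}, reads
\begin{equation*}
\partial_t b_3 + (u\cdot\nabla) b_3 + (-\Delta)^{\alpha} b_3 = -(b\cdot\nabla) j_3 + (b\cdot\nabla) u_3 .
\end{equation*}
The decisive point is that $\omega_3$ and $b_3$ (the vertical component $b_v$) carry the \emph{same} dissipation $(-\Delta)^{\alpha}$, so that summing the two equations for $\phi \triangleq \omega_3 + b_3$ makes the singular term $(b\cdot\nabla)j_3$ cancel, leaving the quadratic, lower-order forcing
\begin{equation*}
\partial_t \phi + (u\cdot\nabla)\phi + (-\Delta)^{\alpha}\phi = (b\cdot\nabla) u_3 .
\end{equation*}

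Next I would run the $L^2(\mathbb{R}^2)$-estimate on this equation. The transport term vanishes since $\nabla\cdot u = 0$, yielding $\frac12\frac{d}{dt}\|\phi\|_{L^2}^2 + \|\Lambda^{\alpha}\phi\|_{L^2}^2 = \int_{\mathbb{R}^2}(b\cdot\nabla)u_3\,\phi\,dx$. I would bound the forcing by Hölder's inequality as $\|b_h\|_{L^q}\|\nabla u_3\|_{L^2}\|\phi\|_{L^p}$ with $\frac1p+\frac1q=\frac12$, and then apply the two-dimensional Gagliardo--Nirenberg inequalities $\|\phi\|_{L^p}\lesssim\|\phi\|_{L^2}^{1-\theta}\|\Lambda^{\alpha}\phi\|_{L^2}^{\theta}$ and $\|b_h\|_{L^q}\lesssim\|b_h\|_{L^2}^{1-\sigma}\|\Lambda^{\frac32}b_h\|_{L^2}^{\sigma}$, whose exponents are linked by $\theta\alpha+\frac32\sigma=1$. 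The key is that the energy inequality \eqref{est 119} supplies the strong magnetic dissipation $b_h\in L^2_T\dot H^{\frac32}$ together with $u_v\in L^2_T\dot H^{1}$, i.e.\ $\nabla u_3\in L^2_T L^2$. After absorbing $\|\Lambda^{\alpha}\phi\|_{L^2}^2$ by Young's inequality, one is left with a time-coefficient of the form $\|\nabla u_3\|_{L^2}^{2/(2-\theta)}\|\Lambda^{\frac32}b_h\|_{L^2}^{2\sigma/(2-\theta)}$, and a Hölder argument in time shows it belongs to $L^1_T$ provided $\theta+\sigma\le 1$; this is compatible with $\theta\alpha+\frac32\sigma=1$ for a nonempty range of admissible $\theta,\sigma\ge 0$ precisely because $\alpha>\frac12$. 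Grönwall's inequality then closes the estimate, the initial datum being finite since $\phi(0)=\omega_3(0)+b_3^{\text{in}}\in L^2$ as $u^{\text{in}},b^{\text{in}}\in H^3$.

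Finally I would recover $\omega_3 = \phi - b_3$ and invoke \eqref{est 119} once more, which gives $b_3\in L^\infty_T L^2$ and $b_v\in L^2_T\dot H^{\alpha}$; hence $\omega_3\in L^\infty_T L^2\cap L^2_T\dot H^{\alpha}$. Since $\partial_1 u_1+\partial_2 u_2 = 0$, the horizontal field $u_h$ is reconstructed from $\omega_3$ by the two-dimensional Biot--Savart law, so that $\|\Lambda^{1+s}u_h\|_{L^2}$ and $\|\Lambda^{s}\omega_3\|_{L^2}$ are comparable for $s\in\{0,\alpha\}$; together with the $L^2$-bound on $u_h$ from \eqref{est 119} this gives \eqref{est 120}. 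The main obstacle is concentrated entirely in the forcing $(b\cdot\nabla)j_3$: it is one derivative too singular to be controlled by the bounds at hand (these reach only $b_h\in\dot H^{\frac32}$, short of the $\dot H^{2}$ that a direct estimate of $\int (b\cdot\nabla)j_3\,\omega_3$ would require), and the only viable route is the exact cancellation obtained by summing the $\omega_3$- and $b_3$-equations, which in turn relies on the matched $(-\Delta)^{\alpha}$ diffusion on $u_h$ and $b_v$ and on the $2\tfrac12$-D identities \eqref{est 170}.
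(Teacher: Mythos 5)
Your proposal is correct and follows essentially the same route as the paper: taking the third component of the curl of the velocity equation, using the $2\tfrac12$-D cancellations \eqref{est 170} and \eqref{est 150}, adding the resulting $\omega_{3}$-equation to the $b_{3}$-equation so that the Hall contribution $(b\cdot\nabla)j_{3}$ cancels (this is exactly \eqref{est 154} for $z_{3}=\omega_{3}+b_{3}$), closing the $L^{2}$-estimate with the energy-level bounds \eqref{est 119}, and recovering $u_{h}$ from $\omega_{3}$ via the 2-D Biot--Savart law. The only (immaterial) difference is in the forcing term $\int (b_{h}\cdot\nabla_{h})u_{3}\,z_{3}\,dx$, where the paper simply uses $\lVert b_{h}\rVert_{L^{\infty}}\lesssim\lVert b_{h}\rVert_{H^{3/2}}$ and Young's inequality rather than your two-parameter Gagliardo--Nirenberg interpolation.
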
 

\begin{proof}[Proof of Proposition \ref{Proposition 6.1}]
We apply a curl operator on \eqref{est 151a} to deduce the following vorticity formulation:
\begin{equation}\label{est 171} 
\partial_{t} \omega_{3}  + (u\cdot\nabla) \omega_{3} + (-\Delta)^{\alpha} \omega_{3} = [\nabla \times (j\times b)]_{3}
\end{equation} 
 where we used \eqref{est 170a} and \eqref{est 150}. Now we define 
\begin{equation}\label{est 156}
z \triangleq \omega + b \text{ so that } z_{3} = \omega_{3} + b_{3}; 
\end{equation} 
consequently, by adding \eqref{est 171} and \eqref{est 151b}, we see that $z_{3}$ satisfies the equation of 
\begin{equation}\label{est 154}  
\partial_{t} z_{3} + (u\cdot\nabla) z_{3} - (b\cdot\nabla) u_{3} + (-\Delta)^{\alpha} z_{3} = 0.
\end{equation}  
We take $L^{2}(\mathbb{R}^{2})$-inner products on \eqref{est 154} with $z_{3}$ to compute 
\begin{align}
\frac{1}{2} \frac{d}{dt} \lVert z_{3} \rVert_{L^{2}}^{2} + \lVert z_{3} \rVert_{\dot{H}^{\alpha}}^{2} =& \int_{\mathbb{R}^{2}} (b_{h} \cdot \nabla_{h}) u_{3} z_{3} dx \nonumber \\
\leq& \lVert b_{h} \rVert_{L^{\infty}} \lVert \nabla u_{3} \rVert_{L^{2}} \lVert z_{3} \rVert_{L^{2}} \lesssim ( \lVert b_{h} \rVert_{H^{\frac{3}{2}}}^{2} + \lVert \nabla u_{3} \rVert_{L^{2}}^{2}) \lVert z_{3} \rVert_{L^{2}} \label{est 172}
\end{align}
by H$\ddot{\mathrm{o}}$lder's inequality, the Sobolev embedding of $H^{\frac{3}{2}}(\mathbb{R}^{2}) \hookrightarrow L^{\infty} (\mathbb{R}^{2})$, and Young's inequality. Due to \eqref{est 119}, $b_{h} \in L_{T}^{2} H_{x}^{\frac{3}{2}}$ and $u_{3} \in L_{T}^{2} \dot{H}_{x}^{1}$; thus, Gronwall's inequality applied on \eqref{est 172} implies 
\begin{equation}\label{est 173}
z_{3} \in L_{T}^{\infty} L_{x}^{2} \cap L_{T}^{2} \dot{H}_{x}^{\alpha}. 
\end{equation} 
Again, from \eqref{est 119} we know that $b_{3} \in L_{T}^{\infty} L_{x}^{2} \cap L_{T}^{2} \dot{H}_{x}^{\alpha}$. This, together with \eqref{est 173} and \eqref{est 156} implies that $\omega_{3} \in L_{T}^{\infty} L_{x}^{2} \cap L_{T}^{2} \dot{H}_{x}^{\alpha}$. As $\omega_{v} = \nabla \times u_{h}$, this implies $u_{h} \in L_{T}^{\infty} H_{x}^{1} \cap L_{T}^{2} H_{x}^{1+ \alpha}$ as desired. 
\end{proof} 

\begin{proposition}\label{Proposition 6.2} 
Under the hypothesis of Theorem \ref{Theorem 2.3}, suppose that $(u,b)$ is a smooth solution to \eqref{est 151} over $[0,T]$. Then 
\begin{subequations}\label{est 121} 
\begin{align}
&u \in L_{T}^{\infty} H_{x}^{1}, \hspace{3mm} u_{v} \in L_{T}^{2} H_{x}^{2}, \\
&b \in L_{T}^{\infty} H_{x}^{1},  \hspace{3mm} b_{h} \in L_{T}^{2} H_{x}^{\frac{5}{2}},  \hspace{3mm} b_{v} \in L_{T}^{2} H_{x}^{1+ \alpha}.   
\end{align} 
\end{subequations} 
\end{proposition}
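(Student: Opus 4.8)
The plan is to carry out an $H^{1}(\mathbb{R}^{2})$-energy estimate on the full system \eqref{est 151}, using Proposition \ref{Proposition 6.1} to supply the control on $u_{h}$ that the weak viscous diffusion $(-\Delta)^{\alpha}u_{h}$ cannot provide by itself, and using the $2\frac{1}{2}$-D Hall inequality \eqref{est 38} to dispose of the Hall term. First I would take $L^{2}(\mathbb{R}^{2})$-inner products of \eqref{est 151a} with $-\Delta u$ and of \eqref{est 151b} with $-\Delta b$ and add them, which produces
\begin{align*}
\frac{1}{2}\frac{d}{dt}(\lVert\nabla u\rVert_{L^{2}}^{2}+\lVert\nabla b\rVert_{L^{2}}^{2}) &+ \lVert\Lambda^{1+\alpha}u_{h}\rVert_{L^{2}}^{2} + \lVert\Lambda^{2}u_{v}\rVert_{L^{2}}^{2}\\
&+ \lVert\Lambda^{\frac{5}{2}}b_{h}\rVert_{L^{2}}^{2} + \lVert\Lambda^{1+\alpha}b_{v}\rVert_{L^{2}}^{2} = \mathcal{N} + \mathcal{H},
\end{align*}
where $\mathcal{N}$ collects the four MHD nonlinearities and $\mathcal{H}\triangleq -\int_{\mathbb{R}^{2}}\nabla\times(j\times b)\cdot\Delta b\,dx$ is the Hall term. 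The dissipation on the left already matches the spaces claimed in \eqref{est 121}, so the whole matter reduces to absorbing $\mathcal{N}+\mathcal{H}$.

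The $2\frac{1}{2}$-D structure is what makes this tractable: since every field is independent of $x_{3}$, both advection operators become horizontal, $(u\cdot\nabla)=(u_{h}\cdot\nabla_{h})$ and $(b\cdot\nabla)=(b_{h}\cdot\nabla_{h})$. Integrating by parts in $\mathcal{N}$ and exploiting the standard MHD cancellation — the top-order parts of the $(b\cdot\nabla)b$ and $(b\cdot\nabla)u$ contributions combine into $-\sum_{k}\int_{\mathbb{R}^{2}}(b_{h}\cdot\nabla_{h})(\partial_{k}b\cdot\partial_{k}u)\,dx=0$ by $\nabla_{h}\cdot b_{h}=0$ — leaves
\begin{equation*}
\lvert\mathcal{N}\rvert \lesssim \int_{\mathbb{R}^{2}}\lvert\nabla u_{h}\rvert\lvert\nabla u\rvert^{2} + \lvert\nabla u_{h}\rvert\lvert\nabla b\rvert^{2} + \lvert\nabla b_{h}\rvert\lvert\nabla u\rvert\lvert\nabla b\rvert\,dx.
\end{equation*}
The Hall term I would treat exactly as in \eqref{est 139}: \eqref{est 38} followed by $\dot{H}^{\frac{1}{2}}(\mathbb{R}^{2})\hookrightarrow L^{4}(\mathbb{R}^{2})$ gives $\lvert\mathcal{H}\rvert\lesssim\lVert\Lambda^{\frac{3}{2}}b_{h}\rVert_{L^{2}}\lVert\Lambda^{\frac{5}{2}}b_{h}\rVert_{L^{2}}\lVert\nabla b\rVert_{L^{2}}$, so that a small multiple of $\lVert\Lambda^{\frac{5}{2}}b_{h}\rVert_{L^{2}}^{2}$ is absorbed by the dissipation while $\lVert\Lambda^{\frac{3}{2}}b_{h}\rVert_{L^{2}}^{2}$, which lies in $L_{T}^{1}$ by \eqref{est 119}, survives as a Gronwall coefficient on $\lVert\nabla b\rVert_{L^{2}}^{2}$.

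To bound $\mathcal{N}$ I would split each factor into horizontal and vertical parts and keep track of where the excess derivative falls, using H\"older together with the two-dimensional Gagliardo--Nirenberg inequality $\lVert f\rVert_{L^{4}}^{2}\lesssim\lVert f\rVert_{L^{2}}\lVert\nabla f\rVert_{L^{2}}$. Contributions in which the top derivative lands on $u_{v}$ or on $b_{h}$ close immediately, since $\Lambda^{2}u_{v}$ and $\Lambda^{\frac{5}{2}}b_{h}$ control $\nabla^{2}u_{v}$ and $\nabla^{2}b_{h}$; the factors $\lvert\nabla u_{h}\rvert$ and $\lvert\nabla b_{h}\rvert$ are controlled through Proposition \ref{Proposition 6.1} (for instance $\int_{\mathbb{R}^{2}}\lvert\nabla u_{h}\rvert^{3}\,dx$ is handled by interpolating $\lVert\nabla u_{h}\rVert_{L^{3}}$ between $\lVert\nabla u_{h}\rVert_{L^{2}}\in L_{T}^{\infty}$ and $\lVert u_{h}\rVert_{\dot{H}^{1+\alpha}}\in L_{T}^{2}$) and through $\lVert\nabla b_{h}\rVert_{L^{4}}\lesssim\lVert\Lambda^{\frac{3}{2}}b_{h}\rVert_{L^{2}}$. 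The genuinely delicate contributions are those carrying two derivatives on $b_{v}$, whose sole diffusion is the weak $\Lambda^{1+\alpha}$. Here I would use $\alpha>\frac{1}{2}$ through the interpolation $\lVert\Lambda^{\frac{3}{2}}b_{v}\rVert_{L^{2}}\lesssim\lVert\nabla b_{v}\rVert_{L^{2}}^{1-\theta}\lVert\Lambda^{1+\alpha}b_{v}\rVert_{L^{2}}^{\theta}$ with $\theta=\frac{1}{2\alpha}<1$, so that $\lVert\nabla b_{v}\rVert_{L^{4}}^{2}\lesssim\lVert\nabla b_{v}\rVert_{L^{2}}^{2(1-\theta)}\lVert\Lambda^{1+\alpha}b_{v}\rVert_{L^{2}}^{2\theta}$ with $2\theta=\frac{1}{\alpha}<2$; Young's inequality then peels off $\epsilon\lVert\Lambda^{1+\alpha}b_{v}\rVert_{L^{2}}^{2}$ and leaves $\lVert\nabla b_{v}\rVert_{L^{2}}^{2}$ against a coefficient that is bounded, or $L_{T}^{1}$, in time.

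Collecting these bounds yields a differential inequality of the form $\frac{d}{dt}(\lVert\nabla u\rVert_{L^{2}}^{2}+\lVert\nabla b\rVert_{L^{2}}^{2}) + (\text{dissipation}) \le C(t)(\lVert\nabla u\rVert_{L^{2}}^{2}+\lVert\nabla b\rVert_{L^{2}}^{2}) + g(t)$ with $C,g\in L_{T}^{1}$ thanks to \eqref{est 119} and Proposition \ref{Proposition 6.1}, and Gronwall's inequality then delivers \eqref{est 121}. I expect the principal obstacle to be precisely the anisotropy forced by the weak $\Lambda^{1+\alpha}$ diffusion on $b_{v}$: no cubic term can afford two full derivatives on $b_{v}$, so the integration-by-parts and the component bookkeeping must be arranged so that at most one excess derivative ever lands on $b_{v}$, and even then the estimate only closes because $\alpha>\frac{1}{2}$ keeps the exponent $\frac{1}{\alpha}$ in the interpolation--Young step strictly below $2$. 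The function of Proposition \ref{Proposition 6.1} is exactly to remove the analogous difficulty for $u_{h}$, whose viscous diffusion is equally weak, by furnishing its $L_{T}^{\infty}H_{x}^{1}\cap L_{T}^{2}H_{x}^{1+\alpha}$ bound ahead of time.
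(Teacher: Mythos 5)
Your proposal is correct and follows essentially the same route as the paper's proof of Proposition \ref{Proposition 6.2}: an $H^{1}$-energy estimate in which the $2\frac{1}{2}$-D structure forces the separated factor in the convective terms to be $\nabla u_{h}$ or $\nabla b_{h}$, the Hall term is handled exactly as in \eqref{est 139}, $u_{h}$ is controlled by Proposition \ref{Proposition 6.1}, and the weak diffusion on $b_{v}$ is compensated by interpolation between $\dot{H}^{1}$ and $\dot{H}^{1+\alpha}$ using $\alpha>\frac{1}{2}$, before Gronwall closes the argument via \eqref{est 119} and \eqref{est 120}. The only difference is cosmetic: the paper distributes the H\"older exponents as $L^{\frac{2}{1-\alpha}}\times L^{2}\times L^{\frac{2}{\alpha}}$ with the embeddings $\dot{H}^{\alpha}\hookrightarrow L^{\frac{2}{1-\alpha}}$ and $\dot{H}^{1-\alpha}\hookrightarrow L^{\frac{2}{\alpha}}$, whereas you use $L^{2}\times L^{4}\times L^{4}$ with Gagliardo--Nirenberg, and both close for the same reason.
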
 

\begin{proof}[Proof of Proposition \ref{Proposition 6.2}]
We take $L^{2}(\mathbb{R}^{2})$-inner products of \eqref{est 151} with $(-\Delta u, -\Delta b)$ and compute 
\begin{equation}\label{est 143} 
\frac{1}{2} \frac{d}{dt} ( \lVert \nabla u \rVert_{L^{2}}^{2} + \lVert \nabla b \rVert_{L^{2}}^{2}) + \lVert \Lambda^{\alpha} \nabla u_{h} \rVert_{L^{2}}^{2} + \lVert \Delta u_{v} \rVert_{L^{2}}^{2} + \lVert \Lambda^{\frac{3}{2}} \nabla b_{h} \rVert_{L^{2}}^{2} + \lVert \Lambda^{\alpha} \nabla b_{v} \rVert_{L^{2}}^{2} = \RomanVIII_{1} + \RomanVIII_{2}  
\end{equation} 
where 
\begin{subequations}\label{est 175} 
\begin{align}
 \RomanVIII_{1} \triangleq &- \sum_{i,k=1}^{2}\sum_{j=1}^{3} \int_{\mathbb{R}^{2}} \partial_{k} u_{i}\partial_{i} u_{j} \partial_{k} u_{j} + \partial_{k} u_{i} \partial_{i} b_{j}\partial_{k} b_{j} \nonumber\\
& \hspace{10mm} - \int_{\mathbb{R}^{2}} \partial_{k} b_{i}\partial_{i} b_{j} \partial_{k} u_{j} + \partial_{k} b_{i}\partial_{i} u_{j} \partial_{k} b_{j} dx, \label{est 175a}\\
 \RomanVIII_{2} \triangleq& \int_{\mathbb{R}^{2}} \nabla \times (j\times b) \cdot \Delta b dx. \label{est 175b}
\end{align}
\end{subequations} 
First, we estimate from \eqref{est 175a}
\begin{align}
\RomanVIII_{1} \lesssim& \lVert \nabla u_{h} \rVert_{L^{\frac{2}{1-\alpha}}} \lVert \nabla u \rVert_{L^{2}} \lVert \nabla u \rVert_{L^{\frac{2}{\alpha}}} + \lVert \nabla u_{h} \rVert_{L^{\frac{2}{1-\alpha}}} \lVert \nabla b \rVert_{L^{2}} \lVert \nabla b \rVert_{L^{\frac{2}{\alpha}}} +\lVert \nabla b_{h} \rVert_{L^{\infty}} \lVert \nabla u \rVert_{L^{2}} \lVert \nabla b \rVert_{L^{2}} \nonumber  \\
\lesssim& \lVert u_{h} \rVert_{\dot{H}^{1+\alpha}} \lVert \nabla u \rVert_{L^{2}} \lVert u \rVert_{\dot{H}^{2-\alpha}} + \lVert u_{h} \rVert_{\dot{H}^{1+ \alpha}} \lVert \nabla b \rVert_{L^{2}} \lVert b\rVert_{\dot{H}^{2-\alpha}} + \lVert b_{h} \rVert_{H^{\frac{5}{2}}} \lVert \nabla u \rVert_{L^{2}} \lVert \nabla b\rVert_{L^{2}} \nonumber \\
\lesssim& \lVert u_{h} \rVert_{\dot{H}^{1+\alpha}} \lVert \nabla u \rVert_{L^{2}} (1+ \lVert u_{h} \rVert_{\dot{H}^{1+ \alpha}} + \lVert u_{v} \rVert_{\dot{H}^{2}}) + \lVert u_{h} \rVert_{\dot{H}^{1+ \alpha}} \lVert \nabla b \rVert_{L^{2}} (1+ \lVert b_{h} \rVert_{\dot{H}^{\frac{5}{2}}} + \lVert b_{v} \rVert_{\dot{H}^{1+ \alpha}}) \nonumber \\
&+ \lVert b_{h} \rVert_{H^{\frac{5}{2}}} (\lVert u_{h} \rVert_{H_{x}^{1+ \alpha}} + \lVert u_{v} \rVert_{\dot{H}_{x}^{1}}) \lVert \nabla b\rVert_{L^{2}} \nonumber\\
\leq& \frac{1}{4} ( \lVert u_{h} \rVert_{\dot{H}^{1+ \alpha}}^{2} + \lVert u_{v} \rVert_{\dot{H}^{2}}^{2} + \lVert b_{h} \rVert_{\dot{H}^{\frac{5}{2}}}^{2} + \lVert b_{v} \rVert_{\dot{H}^{1+ \alpha}}^{2}) \nonumber\\
& + C( 1+ \lVert \nabla u \rVert_{L^{2}}^{2} + \lVert \nabla b \rVert_{L^{2}}^{2}) (1+ \lVert u_{h} \rVert_{H^{1+ \alpha}}^{2} + \lVert u_{v} \rVert_{\dot{H}^{1}}^{2}) \label{est 142} 
\end{align}
by H$\ddot{\mathrm{o}}$lder's inequality, the Sobolev embeddings $\dot{H}^{\alpha} (\mathbb{R}^{2}) \hookrightarrow L^{\frac{2}{1-\alpha}}(\mathbb{R}^{2})$, $\dot{H}^{1-\alpha}(\mathbb{R}^{2}) \hookrightarrow L^{\frac{2}{\alpha}}(\mathbb{R}^{2})$, and $H^{\frac{3}{2}}(\mathbb{R}^{2}) \hookrightarrow L^{\infty} (\mathbb{R}^{2})$, the hypothesis that $\alpha > \frac{1}{2}$ so that $2- \alpha \leq 1+ \alpha$, and Young's inequality. Next, $\RomanVIII_{2}$ can be estimated identically to \eqref{est 139}: 
\begin{equation}\label{est 159} 
\RomanVIII_{2} \leq \frac{1}{4} \lVert \Lambda^{\frac{5}{2}} b_{h} \rVert_{L^{2}}^{2} + C \lVert \Lambda^{\frac{3}{2}} b_{h} \rVert_{L^{2}}^{2} \lVert \nabla b \rVert_{L^{2}}^{2}. 
\end{equation} 
Considering \eqref{est 142}-\eqref{est 159} in \eqref{est 143}, applying Gronwall's inequality to the resulting inequality, and then making use of $b_{h} \in L_{T}^{2} \dot{H}_{x}^{\frac{3}{2}}$ and $u_{v} \in L_{T}^{2} \dot{H}_{x}^{1}$ from \eqref{est 119} and $u_{h} \in L_{T}^{2} H_{x}^{1+ \alpha}$ from \eqref{est 120} verify \eqref{est 121}. 
\end{proof}  

\begin{proposition}\label{Proposition 6.3} 
Under the hypothesis of Theorem \ref{Theorem 2.3}, suppose that $(u,b)$ is a smooth solution to \eqref{est 151} over $[0,T]$. Then
\begin{subequations}\label{est 160} 
\begin{align}
& u \in L_{T}^{\infty} \dot{H}_{x}^{2}, \hspace{1mm}  u_{h} \in L_{T}^{2} H_{x}^{2+ \alpha}, \hspace{1mm} u_{v} \in L_{T}^{2} \dot{H}_{x}^{3},\\
& b \in L_{T}^{\infty} \dot{H}_{x}^{2}, \hspace{1mm}  b_{h} \in L_{T}^{2} H_{x}^{\frac{7}{2}}, \hspace{3mm} b_{v} \in L_{T}^{2} \dot{H}_{x}^{2+\alpha}.  
\end{align}
\end{subequations}
\end{proposition}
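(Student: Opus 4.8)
The plan is to run an $\dot{H}^{2}(\mathbb{R}^{2})$-estimate on the system \eqref{est 151}, bootstrapping from the $\dot{H}^{1}$-level bounds already secured in Propositions \ref{Proposition 6.1} and \ref{Proposition 6.2}. First I would apply $\Delta$ to \eqref{est 151a}--\eqref{est 151b} and take $L^{2}(\mathbb{R}^{2})$-inner products with $(\Delta u, \Delta b)$. Because $u_{h}, b_{h}$ are supported on components disjoint from $u_{v}, b_{v}$, the four diffusion operators produce exactly the target dissipation norms, so that
\begin{align*}
&\frac{1}{2} \frac{d}{dt} ( \lVert \Delta u \rVert_{L^{2}}^{2} + \lVert \Delta b \rVert_{L^{2}}^{2}) + \lVert u_{h} \rVert_{\dot{H}^{2+\alpha}}^{2} + \lVert u_{v} \rVert_{\dot{H}^{3}}^{2} + \lVert b_{h} \rVert_{\dot{H}^{\frac{7}{2}}}^{2} + \lVert b_{v} \rVert_{\dot{H}^{2+\alpha}}^{2} = \RomanIX_{1} + \RomanIX_{2},
\end{align*}
where $\RomanIX_{1}$ collects the advective and Lorentz/tension nonlinearities arising from $(u\cdot\nabla)u$, $(u\cdot\nabla)b$, $(b\cdot\nabla)b$, and $(b\cdot\nabla)u$, while $\RomanIX_{2} \triangleq - \int_{\mathbb{R}^{2}} \Delta \nabla \times (j\times b)\cdot\Delta b \, dx$ is the Hall contribution.

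The Hall term is handled exactly as in Proposition \ref{Proposition 5.1}: I would invoke \eqref{est 86} and then reproduce the chain \eqref{est 88} --- H$\ddot{\mathrm{o}}$lder's inequality, the embedding $\dot{H}^{\frac{1}{2}}(\mathbb{R}^{2}) \hookrightarrow L^{4}(\mathbb{R}^{2})$, the Gagliardo--Nirenberg inequality, and Young's inequality --- to reach $\RomanIX_{2} \leq \frac{1}{8} \lVert b_{h} \rVert_{\dot{H}^{\frac{7}{2}}}^{2} + C \lVert \Lambda^{\frac{3}{2}} b_{h} \rVert_{L^{2}}^{2} \lVert \Delta b \rVert_{L^{2}}^{2}$, the first term being absorbed on the left. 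The coefficient $\lVert \Lambda^{\frac{3}{2}} b_{h} \rVert_{L^{2}}^{2}$ lies in $L_{T}^{1}$ by the energy inequality \eqref{est 119}, hence is an admissible Gronwall weight; this step requires no new idea beyond the electron-MHD case.

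For $\RomanIX_{1}$ I would rewrite each piece through its commutator, e.g.\ $\int_{\mathbb{R}^{2}} \Delta[(u\cdot\nabla)g]\cdot\Delta g \, dx = \int_{\mathbb{R}^{2}} ([\Delta, u\cdot\nabla]g)\cdot\Delta g \, dx$, the pure transport part vanishing by $\nabla\cdot u = 0$; in two dimensions $[\Delta, u\cdot\nabla]g = (\Delta u_{i})\partial_{i}g + 2\sum_{k=1}^{2}(\partial_{k}u_{i})\partial_{k}\partial_{i}g$, so every term is of the schematic form $\lvert \nabla^{2}f\rvert\,\lvert\nabla g\rvert\,\lvert\nabla^{2}h\rvert$ or $\lvert\nabla f\rvert\,\lvert\nabla^{2}g\rvert\,\lvert\nabla^{2}h\rvert$ with $f,g,h \in \{u,b\}$. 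I would bound these in the same spirit as the $\dot{H}^{1}$-estimate \eqref{est 142}, distributing one factor into $L^{\frac{2}{\alpha}}$ (controlled by $\dot{H}^{3-\alpha}$ via $\dot{H}^{1-\alpha}(\mathbb{R}^{2}) \hookrightarrow L^{\frac{2}{\alpha}}(\mathbb{R}^{2})$), the gradient factor into $L^{\frac{2}{1-\alpha}}$ (controlled by $\dot{H}^{1+\alpha}$ via $\dot{H}^{\alpha}(\mathbb{R}^{2}) \hookrightarrow L^{\frac{2}{1-\alpha}}(\mathbb{R}^{2})$), and the last factor into $L^{2}$, which closes by H$\ddot{\mathrm{o}}$lder's inequality since $\frac{\alpha}{2} + \frac{1-\alpha}{2} + \frac{1}{2} = 1$. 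Each product is thereby dominated by a constant multiple of a trilinear expression of the form $\lVert \cdot \rVert_{\dot{H}^{3-\alpha}} \lVert \cdot \rVert_{\dot{H}^{1+\alpha}} \lVert \cdot \rVert_{\dot{H}^{2}}$, the three Sobolev norms being carried by the three occurrences of $u,b$. The decisive point is the hypothesis $\alpha > \frac{1}{2}$, which yields $3-\alpha \leq 2+\alpha$, so that $\dot{H}^{3-\alpha}$ interpolates between the running $L_{T}^{\infty}\dot{H}^{2}$ bound and the $\dot{H}^{2+\alpha}$ dissipation; this is the exact higher-order analogue of the inequality $2-\alpha \leq 1+\alpha$ used at \eqref{est 142}. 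Young's inequality then absorbs the dissipative factor and leaves a time-dependent coefficient furnished by \eqref{est 119}--\eqref{est 121}, multiplying $(1+\lVert\Delta u\rVert_{L^{2}}^{2}+\lVert\Delta b\rVert_{L^{2}}^{2})$.

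The main obstacle is precisely the anisotropy of the dissipation: the horizontal velocity $u_{h}$ and the vertical magnetic field $b_{v}$ carry only the fractional dissipation $\dot{H}^{2+\alpha}$ with $\alpha < 1$, weaker than the $\dot{H}^{3}$ one would want at this order, so that whenever a top-order derivative lands on $u_{h}$ or $b_{v}$ one cannot simply place it in $L^{2}$ against $\lVert\cdot\rVert_{\dot{H}^{3}}$. The three-factor products must therefore be split so that at most one factor is demanded at the $\dot{H}^{2+\alpha}$ level (after interpolating $\dot{H}^{3-\alpha}$), the borderline cases being absorbed by the strong dissipation $\dot{H}^{\frac{7}{2}}$ of $b_{h}$ and $\dot{H}^{3}$ of $u_{v}$; checking that this routing succeeds simultaneously for every mixed term $\partial_{k}f_{i}\partial_{k}\partial_{i}g_{j}$ and $\Delta f_{i}\partial_{i}g_{j}$ is the delicate bookkeeping at the heart of the argument, reflecting the tightness of the $8+6\alpha > 11$ derivative budget noted in Remark \ref{Remark 2.2}(3). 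Once $\RomanIX_{1}+\RomanIX_{2}$ is bounded by $\frac{1}{2}$ of the dissipation plus $C\Phi(t)(1+\lVert\Delta u\rVert_{L^{2}}^{2}+\lVert\Delta b\rVert_{L^{2}}^{2})$ with $\Phi \in L_{T}^{1}$, Gronwall's inequality together with the lower-order bounds \eqref{est 120}--\eqref{est 121} delivers \eqref{est 160}.
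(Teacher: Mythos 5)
Your proposal follows the paper's proof essentially verbatim: the same $\Delta$-energy identity, the same treatment of the Hall term via \eqref{est 86} by reproducing the chain \eqref{est 87}--\eqref{est 88} (the paper's \eqref{est 163}), and the same H\"older/Sobolev routing through $L^{\frac{2}{1-\alpha}}\times L^{\frac{2}{\alpha}}\times L^{2}$ with $\dot{H}^{\alpha}(\mathbb{R}^{2})\hookrightarrow L^{\frac{2}{1-\alpha}}$ and $\dot{H}^{1-\alpha}(\mathbb{R}^{2})\hookrightarrow L^{\frac{2}{\alpha}}$ for the advective terms, closed by Gronwall with the $L_{T}^{1}$ coefficients supplied by Propositions \ref{Proposition 6.1}--\ref{Proposition 6.2}. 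The only cosmetic difference is that the paper treats the terms weighted by $\nabla b_{h},\nabla^{2}b_{h}$ more simply via $\dot{H}^{\frac{1}{2}}(\mathbb{R}^{2})\hookrightarrow L^{4}(\mathbb{R}^{2})$ and $H^{\frac{3}{2}}(\mathbb{R}^{2})\hookrightarrow L^{\infty}(\mathbb{R}^{2})$, exploiting the $L_{T}^{2}H_{x}^{\frac{5}{2}}$ control of $b_{h}$, whereas your uniform fractional splitting also closes.
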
 

\begin{proof}[Proof of Proposition \ref{Proposition 6.3}]
We apply $\Delta$ to \eqref{est 151} and take $L^{2}(\mathbb{R}^{2})$-inner products with $(\Delta u, \Delta b)$ to obtain 
\begin{equation}\label{est 161} 
\frac{1}{2} \frac{d}{dt} ( \lVert \Delta u \rVert_{L^{2}}^{2} + \lVert \Delta b \rVert_{L^{2}}^{2}) + \lVert \Lambda^{\alpha} \Delta u_{h} \rVert_{L^{2}}^{2} + \lVert \nabla \Delta u_{v} \rVert_{L^{2}}^{2} + \lVert \Lambda^{\frac{3}{2}} \Delta b_{h} \rVert_{L^{2}}^{2} + \lVert \Lambda^{\alpha} \Delta b_{v} \rVert_{L^{2}}^{2} = \RomanIX_{1} + \RomanIX_{2}  
\end{equation} 
where 
\begin{subequations}\label{est 165} 
\begin{align}
 \RomanIX_{1} \triangleq& -\int_{\mathbb{R}^{2}}  \Delta (( u\cdot\nabla) u) \cdot \Delta u + \Delta (( u \cdot \nabla) b)  \cdot \Delta b dx  \nonumber \\
&+ \int_{\mathbb{R}^{2}}  \Delta (( b\cdot\nabla) b) \cdot \Delta u + \Delta (( b \cdot\nabla) u)  \cdot \Delta b dx, \label{est 165a} \\
 \RomanIX_{2} \triangleq& -\int_{\mathbb{R}^{2}}  \Delta \nabla \times ( j \times b) \cdot \Delta b dx.\label{est 165b} 
\end{align}
\end{subequations} 
Using divergence-free property, we first bound $\RomanIX_{1}$ by 
\begin{align*}
\RomanIX_{1} \lesssim& \int_{\mathbb{R}^{2}} \lvert \nabla^{2} u_{h} \rvert \lvert \nabla u \rvert \lvert \nabla^{2} u \rvert + \lvert \nabla u_{h} \rvert \lvert \nabla^{2} u \rvert \lvert \nabla^{2} u \rvert + \lvert \nabla^{2} u_{h} \rvert \lvert \nabla b \rvert \lvert \nabla^{2} b \rvert + \lvert \nabla u_{h} \rvert \lvert \nabla^{2} b \rvert \lvert \nabla^{2} b \rvert \\
&+ \int_{\mathbb{R}^{2}} \lvert \nabla^{2} b_{h} \rvert \lvert \nabla b \rvert \lvert \nabla^{2} u \rvert + \lvert \nabla b_{h} \rvert \lvert \nabla^{2} b \rvert \lvert \nabla^{2} u \rvert + \lvert \nabla^{2} b_{h} \rvert \lvert \nabla u \rvert \lvert \nabla^{2} b \rvert + \lvert \nabla b_{h} \rvert \lvert \nabla^{2} u \rvert \lvert \nabla^{2} b \rvert dx 
\end{align*}
so that we can bound the first integral by 
\begin{align}
&  \int_{\mathbb{R}^{2}} \lvert \nabla^{2} u_{h} \rvert \lvert \nabla u \rvert \lvert \nabla^{2} u \rvert + \lvert \nabla u_{h} \rvert \lvert \nabla^{2} u \rvert \lvert \nabla^{2} u \rvert + \lvert \nabla^{2} u_{h} \rvert \lvert \nabla b \rvert \lvert \nabla^{2} b \rvert + \lvert \nabla u_{h} \rvert \lvert \nabla^{2} b \rvert \lvert \nabla^{2} b \rvert  \nonumber\\
\lesssim& \lVert \nabla^{2} u_{h} \rVert_{L^{\frac{2}{1-\alpha}}} ( \lVert \nabla u \rVert_{L^{\frac{2}{\alpha}}} \lVert \nabla^{2} u \rVert_{L^{2}} + \lVert \nabla b \rVert_{L^{\frac{2}{\alpha}}} \lVert \nabla^{2} b \rVert_{L^{2}}) \nonumber\\
&+ \lVert \nabla u_{h} \rVert_{L^{\frac{2}{1-\alpha}}} ( \lVert \nabla^{2} u \rVert_{L^{\frac{2}{\alpha}}} \lVert \nabla^{2} u \rVert_{L^{2}} + \lVert \nabla^{2} b \rVert_{L^{\frac{2}{\alpha}}} \lVert \nabla^{2} b \rVert_{L^{2}}) \nonumber\\
\leq& \frac{1}{8} [\lVert u_{h} \rVert_{\dot{H}^{2+ \alpha}}^{2}  + \lVert u_{v} \rVert_{\dot{H}^{3}}^{2} + \lVert b_{h} \rVert_{\dot{H}^{\frac{7}{2}}}^{2} + \lVert b_{v} \rVert_{\dot{H}^{2+ \alpha}}^{2} ]  \nonumber \\
&+ C(\lVert u_{h} \rVert_{H^{1+\alpha}}^{2} + \lVert u_{v} \rVert_{H^{2}}^{2} + \lVert b_{h} \rVert_{H^{\frac{5}{2}}}^{2} + \lVert b_{v} \rVert_{H^{1+ \alpha}}^{2}) ( 1+ \lVert \Delta u \rVert_{L^{2}}^{2} + \lVert \Delta b \rVert_{L^{2}}^{2}) \label{est 162} 
\end{align}
due to H$\ddot{\mathrm{o}}$lder's inequality, the Sobolev embeddings $\dot{H}^{\alpha} (\mathbb{R}^{2}) \hookrightarrow L^{\frac{2}{1-\alpha}}(\mathbb{R}^{2})$, $\dot{H}^{1-\alpha}(\mathbb{R}^{2}) \hookrightarrow L^{\frac{2}{\alpha}}(\mathbb{R}^{2})$, and Young's inequality, while the second integral by 
\begin{align}
& \int_{\mathbb{R}^{2}} \lvert \nabla^{2} b_{h} \rvert \lvert \nabla b \rvert \lvert \nabla^{2} u \rvert + \lvert \nabla b_{h} \rvert \lvert \nabla^{2} b \rvert \lvert \nabla^{2} u \rvert + \lvert \nabla^{2} b_{h} \rvert \lvert \nabla u \rvert \lvert \nabla^{2} b \rvert + \lvert \nabla b_{h} \rvert \lvert \nabla^{2} u \rvert \lvert \nabla^{2} b \rvert dx  \nonumber \\
\lesssim& \lVert \nabla^{2} b_{h} \rVert_{L^{4}} ( \lVert \nabla b \rVert_{L^{4}} \lVert \nabla^{2} u \rVert_{L^{2}} + \lVert \nabla u \rVert_{L^{4}} \lVert \nabla^{2} b \rVert_{L^{2}}) + \lVert \nabla b_{h} \rVert_{L^{\infty}} \lVert \nabla^{2} b \rVert_{L^{2}} \lVert \nabla^{2} u \rVert_{L^{2}} \nonumber\\
\lesssim& (1+ \lVert b_{h} \rVert_{H^{\frac{5}{2}}}^{2}) (1+ \lVert \Delta u \rVert_{L^{2}}^{2} + \lVert \Delta b \rVert_{L^{2}}^{2}) \label{est 174} 
\end{align} 
due to H$\ddot{\mathrm{o}}$lder's inequality, the Sobolev embeddings of $\dot{H}^{\frac{1}{2}}(\mathbb{R}^{2}) \hookrightarrow L^{4}(\mathbb{R}^{2})$ and $H^{\frac{3}{2}}(\mathbb{R}^{2})\hookrightarrow L^{\infty} (\mathbb{R}^{2})$, and Young's inequality. We can estimate $\RomanIX_{2}$ identically to \eqref{est 87}-\eqref{est 88} to deduce 
\begin{equation}\label{est 163} 
\RomanIX_{2} \leq \frac{1}{4} \lVert \Lambda^{\frac{7}{2}} b_{h} \rVert_{L^{2}}^{2} + C \lVert \Lambda^{\frac{3}{2}} b_{h} \rVert_{L^{2}}^{2} \lVert \Delta b \rVert_{L^{2}}^{2}. 
\end{equation} 
Considering \eqref{est 162}, \eqref{est 174}, and \eqref{est 163} to \eqref{est 161}, applying Gronwall's inequality to the resulting inequality, and then making use of $u_{h} \in L_{T}^{2} H_{x}^{1+ \alpha}$ from \eqref{est 120} and $u_{v} \in L_{T}^{2} H_{x}^{2}, b_{h} \in L_{T}^{2} H_{x}^{\frac{5}{2}}$, and $b_{v} \in L_{T}^{2} H_{x}^{1+ \alpha}$ from \eqref{est 121} verify \eqref{est 160}. 
\end{proof} 

We are now ready to deduce the $H^{3}(\mathbb{R}^{2})$-bound and conclude the proof of Theorem \ref{Theorem 2.3}. We apply $\partial_{m}\partial_{k}\partial_{l}$ for $m,k,l \in \{1,2\}$ on \eqref{est 151}, take $L^{2}(\mathbb{R}^{2})$-inner products with $(\partial_{m}\partial_{k}\partial_{l} u, \partial_{m}\partial_{k}\partial_{l} b)$, and then sum over $m,k,l \in \{1,2\}$ to obtain similarly to \eqref{est 96} and \eqref{est 90}, 
\begin{align}
\frac{1}{2} \frac{d}{dt} ( \lVert u \rVert_{\dot{H}^{3}}^{2} + \lVert b \rVert_{\dot{H}^{3}}^{2}) + \lVert u_{h} \rVert_{\dot{H}^{3+ \alpha}}^{2} + \lVert u_{v} \rVert_{\dot{H}^{4}}^{2} + \lVert b_{h} \rVert_{\dot{H}^{\frac{9}{2}}}^{2} + \lVert b_{v} \rVert_{\dot{H}^{3+ \alpha}}^{2} = \RomanXI  + \sum_{l=1}^{2} \RomanVII_{l}\label{est 167} 
\end{align}
where 
\begin{align*}
\RomanXI \triangleq& -\sum_{m,k,l=1}^{2} \int_{\mathbb{R}^{2}} \partial_{m} \partial_{k} \partial_{l} (( u\cdot\nabla) u) \cdot \partial_{m}\partial_{k}\partial_{l} u + \partial_{m} \partial_{k} \partial_{l} (( u\cdot\nabla) b) \cdot \partial_{m} \partial_{k} \partial_{l} b dx \nonumber \\
& - \int_{\mathbb{R}^{2}} \partial_{m} \partial_{k} \partial_{l} (( b\cdot\nabla) b) \cdot \partial_{m} \partial_{k} \partial_{l} u + \partial_{m} \partial_{k} \partial_{l} (( b\cdot\nabla) u) \cdot \partial_{m} \partial_{k} \partial_{l} b dx
\end{align*}
and $\{\RomanVII_{l}\}_{l=1}^{2}$ was given in \eqref{est 90}. Similarly to the estimate \eqref{est 165a} and \eqref{est 162} but in a more straight-forward manner, we can estimate 
\begin{equation}\label{est 166}
\RomanXI \lesssim ( \lVert \nabla u \rVert_{L^{\infty}} + \lVert \nabla b \rVert_{L^{\infty}}) ( \lVert u \rVert_{\dot{H}^{3}}^{2} + \lVert b \rVert_{\dot{H}^{3}}^{2}) \lesssim ( \lVert u \rVert_{H^{2+ \alpha}} + \lVert b \rVert_{H^{2+ \alpha}}) ( \lVert u \rVert_{\dot{H}^{3}}^{2} +\lVert  b \rVert_{\dot{H}^{3}}^{2}) 
\end{equation} 
by the Sobolev embedding $H^{2+ \alpha} (\mathbb{R}^{2}) \hookrightarrow W^{1,\infty} (\mathbb{R}^{2})$. Considering \eqref{est 166} and the estimate \eqref{est 164} for $\sum_{l=1}^{2} \RomanVII_{l}$ in \eqref{est 167}, applying Gronwall's inequality to the resulting inequality, and making use of $u \in L_{T}^{2} H_{x}^{2+ \alpha}$ and $b \in L_{T}^{2} H_{x}^{2+ \alpha}$ from \eqref{est 160} complete the proof of Theorem \ref{Theorem 2.3}. 

\section{Appendix} 
\subsection{Proof of the local well-posedness of \eqref{est 63} in $H^{m} (\mathbb{R}^{2})$ for $m \in\mathbb{N}$ such that $m > 2$}
We recall the Littlewood-Paley decomposition (e.g., from \cite[Section 2.2]{BCD11}). Let $\mathcal{S}$ be the Schwartz space, $\chi, \phi$ be smooth functions such that 
\begin{align*}
\text{supp } \phi \subset \left\{\xi \in \mathbb{R}^{2} : \frac{3}{4} \leq \lvert \xi\rvert \leq \frac{8}{3} \right\}, \hspace{2mm}\text{supp } \chi \subset \left\{\xi \in \mathbb{R}^{2}: \lvert \xi\rvert \leq \frac{4}{3}\right\}, \hspace{2mm} \chi(\xi) + \sum_{j\geq 0} \phi(2^{-j} \xi) = 1,
\end{align*}
and denote the Littlewood-Paley operators by 
\begin{align*}
\Delta_{j} f \triangleq \mathcal{F}^{-1} (\phi(2^{-j} \lvert \xi\rvert)\hat{f}), \hspace{3mm} \Delta_{-1} f \triangleq \mathcal{F}^{-1} ( \chi \hat{f}),  \hspace{3mm} S_{j}f \triangleq  \sum_{j' \leq j-1} \Delta_{j'} u.
\end{align*}
We define $\mathcal{S}_{r}'$ to be the subspace of $\mathcal{S}'$ such that $\lim_{j\to -\infty} \lVert S_{j} f\rVert_{L^{\infty}} = 0$ for all $f \in \mathcal{S}_{r}'$.  
\begin{define}
For $p, q \in [1,\infty], s \in \mathbb{R}$, we define the Besov spaces $B_{p,q}^{s}(\mathbb{R}^{2}) \triangleq \{f \in \mathcal{S}_{r}' : \lVert f\rVert_{B_{p,q}^{s}} < \infty\}$ where 
\begin{equation*}
\lVert f\rVert_{B_{p,q}^{s}} \triangleq \left\lVert 2^{js}\lVert \Delta_{j} f\rVert_{L^{p}} \right\rVert_{l^{q}(j \geq -1)}.
\end{equation*}
It is well-known that $B_{2,2}^{s}(\mathbb{R}^{2}) = H^{s}(\mathbb{R}^{2})$. 
\end{define}
For any $l \geq -1$ we apply $\Delta_{l}$ to \eqref{est 63} to obtain 
\begin{equation*}
\partial_{t} \Delta_{l} b + \epsilon \Delta_{l} \nabla \times (j\times b) + \Delta_{l} (-\Delta)^{\frac{3}{2}} b_{h} + \Delta_{l} (-\Delta)^{\alpha} b_{v} = 0. 
\end{equation*} 
For $l \geq 5$, we use Bernstein's inequality to compute for universal constants $C_{0,1}, C_{0,2} \geq 0$,  
\begin{equation}\label{est 145} 
\frac{1}{2} \frac{d}{dt}  \lVert \Delta_{l} b \rVert_{L^{2}}^{2} + C_{0,1} 2^{3l} \lVert \Delta_{l} b_{h} \rVert_{L^{2}}^{2} + C_{0,2} 2^{2\alpha l} \lVert \Delta_{l} b_{v} \rVert_{L^{2}}^{2} \leq - \int_{\mathbb{R}^{2}} \Delta_{l} \nabla \times (j\times b) \cdot \Delta_{l} b dx. 
\end{equation} 
It is shown on \cite[p. 631]{CWW15a} that we can estimate for any $l \geq -1$, 
\begin{equation}\label{est 97}
- \int_{\mathbb{R}^{2}} \Delta_{l} \nabla \times (j\times b) \cdot \Delta_{l} b dx \lesssim 2^{l} \lVert \nabla b \rVert_{L^{\infty}} \lVert \Delta_{l} b \rVert_{L^{2}} \left( \lVert \Delta_{l} b \rVert_{L^{2}} + \sum_{k\geq l-1} \lVert \Delta_{k} b \rVert_{L^{2}} \right).
\end{equation} 
For $l = -1, \hdots, 4$, we have 
\begin{align}
\frac{1}{2} \frac{d}{dt} \lVert \Delta_{l} b \rVert_{L^{2}}^{2} + \lVert (-\Delta)^{\frac{3}{4}} b_{v} \Delta_{l} b_{h} \rVert_{L^{2}}^{2} + \lVert  (-\Delta)^{\frac{\alpha}{2}} \Delta_{l} b_{v} \rVert_{L^{2}}^{2} \leq - \int_{\mathbb{R}^{2}} \Delta_{l} \nabla \times (j\times b) \cdot \Delta_{l} b dx  \label{est 144} 
\end{align}
so that together with \eqref{est 97} we deduce for all $l \geq -1$, 
\begin{align}
& \frac{1}{2} \frac{d}{dt} \lVert \Delta_{l} b \rVert_{L^{2}}^{2} + C_{0,1} 2^{3l} \lVert \Delta_{l} b_{h} \rVert_{L^{2}}^{2} + C_{0,2} 2^{2\alpha l} \lVert \Delta_{l} b_{v} \rVert_{L^{2}}^{2}  \nonumber \\
\lesssim& (1+ 2^{l} \lVert \nabla b \rVert_{L^{\infty}} ) \lVert \Delta_{l} b \rVert_{L^{2}}^{2} + 2^{l} \lVert \nabla b \rVert_{L^{\infty}} \lVert \Delta_{l} b \rVert_{L^{2}} \sum_{k\geq l-1} \lVert \Delta_{k} b \rVert_{L^{2}}.\label{est 146} 
\end{align}
We multiply \eqref{est 146} by $2^{2ml}$, sum over $l \geq -1$, integrate over $[0,t]$ and follow identical estimates in \cite[p. 631]{CWW15a} to obtain 
\begin{align}
&\lVert b(t) \rVert_{H^{m}}^{2} + \int_{0}^{t} C_{0,1} \lVert  b_{h} \rVert_{H^{m+ \frac{3}{2}}}^{2} + C_{0,2} \lVert b_{v}  \rVert_{H^{m+\alpha}}^{2} ds \\
\leq& \frac{ \min \{ C_{0,1}, C_{0,2} \}}{2} \int_{0}^{t} \lVert b \rVert_{H^{m+\alpha}}^{2} ds + C ( \lVert b^{\text{in}} \rVert_{H^{m}}^{2} + \int_{0}^{t} \lVert b \rVert_{H^{m}}^{2} +  \lVert \nabla b \rVert_{L^{\infty}}^{\frac{2\alpha}{2\alpha -1}} \lVert b \rVert_{H^{m}}^{2} ds ). \nonumber
\end{align}
We can write 
\begin{equation*}
\frac{ \min\{C_{0,1}, C_{0,2} \}}{2} \lVert b \rVert_{H^{m+\alpha}}^{2}  \leq \frac{C_{0,1}}{2} \lVert b_{h} \rVert_{H^{m+ \frac{3}{2}}}^{2} + \frac{C_{0,2}}{2} \lVert b_{v} \rVert_{H^{m+\alpha}}^{2} 
\end{equation*} 
and apply $H^{m} (\mathbb{R}^{2}) \hookrightarrow W^{1,\infty} (\mathbb{R}^{2})$ because $m > 2$ to deduce 
\begin{align*}
\lVert b(t) \rVert_{H^{m}}^{2} + \int_{0}^{t} \frac{C_{0,1}}{2} \lVert b_{h} \rVert_{H^{m+ \frac{3}{2}}}^{2} + \frac{C_{0,2}}{2} \lVert b_{v} \rVert_{H^{m+ \alpha}}^{2} ds \lesssim \lVert b^{\text{in}} \rVert_{H^{m}}^{2} + \int_{0}^{t} (1+ \lVert b \rVert_{H^{m}}^{\frac{2\alpha}{2\alpha -1} + 2}) ds, 
\end{align*}
from which local well-posedness follows identically to \cite{CWW15a}.

\subsection{Proof of \eqref{est 100}}
We prove the inequality \eqref{est 100}. Again, we restart from the identity \eqref{est 61}:
\begin{align}
\int_{\mathbb{R}^{3}} \Delta \nabla \times (j\times b) \cdot \Delta b dx =& \sum_{l\in \{2,3,4,6,7,8\}} \RomanI_{1,3,l} + \sum_{l\in \{1,3,4,5,6,8 \}} \RomanI_{2,5,l} + \sum_{l \in \{1,2,3, 5,6,7 \}} \RomanI_{4,6,l} \nonumber \\
&+ \sum_{l\in \{2,3,4,6,7,8\}} \RomanII_{1,3,l} +  \sum_{l\in \{1,3,4,5,6,8 \}} \RomanII_{2,5,l} + \sum_{l \in \{1,2,3, 5,6,7 \}} \RomanII_{4,6,l}  \label{est 133}  
\end{align} 
The following computations will consist of many more terms and must be done more carefully. First, 
\begin{align}
\sum_{l \in \{ 4,8\}} \RomanI_{1,3,l} \overset{\eqref{est 6} }{=}& -2 \sum_{k,l=1}^{3} \int_{\mathbb{R}^{3}} \partial_{k} b_{3} \partial_{k} \partial_{3} b_{1} \partial_{l}^{2} \partial_{3} b_{2} - \partial_{k} b_{3} \partial_{k} \partial_{3} b_{2} \partial_{l}^{2} \partial_{3} b_{1} dx  \nonumber \\
\lesssim& \int_{\mathbb{R}^{3}} \lvert \nabla b_{v} \rvert \lvert \nabla^{2}b_{h} \rvert \lvert \nabla^{3} b_{h} \rvert dx. \label{est 118} 
\end{align}
Second, we work on $\RomanI_{1,3,l}$ for $l \in \{3,7\}$ which is non-trivial as it consists of $b_{3}$ twice; this difficult term does not appear in the $2\frac{1}{2}$-D case as they vanish due to $\partial_{3}$ therein. For these terms we first integrate by parts to obtain 
\begin{align}
\sum_{l \in \{3,7 \}} \RomanI_{1,3,l} \overset{\eqref{est 6}}{=}& 2 \sum_{k,l=1}^{3} \int_{\mathbb{R}^{3}} \partial_{k} b_{3} \partial_{k} \partial_{3} b_{1} \partial_{l}^{2} \partial_{2} b_{3} - \partial_{k} b_{3}\partial_{k}\partial_{3} b_{2} \partial_{l}^{2} \partial_{1} b_{3} dx  \nonumber \\
=& - 2 \sum_{k,l=1}^{3} \int_{\mathbb{R}^{3}} \partial_{k} b_{1} \partial_{3} (\partial_{k} b_{3} \partial_{l}^{2} \partial_{2} b_{3}) - \partial_{k} b_{2} \partial_{3} (\partial_{k} b_{3} \partial_{l}^{2} \partial_{1} b_{3}) dx \nonumber \\
=& -2 \sum_{k,l=1}^{3} \int_{\mathbb{R}^{3}} \partial_{k} b_{1} ( \partial_{k} \partial_{3} b_{3} \partial_{l}^{2} \partial_{2} b_{3} + \partial_{k} b_{3} \partial_{l}^{2} \partial_{2} \partial_{3} b_{3}) \nonumber \\
& \hspace{15mm} - \partial_{k} b_{2} (\partial_{k} \partial_{3} b_{3} \partial_{l}^{2} \partial_{1} b_{3} + \partial_{k} b_{3} \partial_{l}^{2} \partial_{1} \partial_{3} b_{3}) dx. \label{est 101}
\end{align}
We now use divergence-free condition which allows us to write $\partial_{3} = -\partial_{1} b_{1} - \partial_{2} b_{2}$ to continue to compute 
\begin{align}
\sum_{l \in \{3,7 \}} \RomanI_{1,3,l}  \overset{\eqref{est 101}}{=}& - 2 \sum_{k,l=1}^{3} \int_{\mathbb{R}^{3}} \partial_{k} b_{1} ( \partial_{k} [ -\partial_{1} b_{1} - \partial_{2} b_{2} ] \partial_{l}^{2} \partial_{2} b_{3} + \partial_{k} b_{3} \partial_{l}^{2} \partial_{2} [-\partial_{1} b_{1} - \partial_{2} b_{2} ]) \nonumber \\
& \hspace{5mm} - \partial_{k} b_{2} ( \partial_{k} [-\partial_{1} b_{1} - \partial_{2} b_{2} ] \partial_{l}^{2} \partial_{1} b_{3} + \partial_{k} b_{3} \partial_{l}^{2} \partial_{1} [ -\partial_{1} b_{1} - \partial_{2} b_{2} ]) dx  \label{est 102} 
\end{align}
and integrate by parts once more on all four terms to conclude 
\begin{align}
\sum_{l \in \{3,7 \}} \RomanI_{1,3,l}  \overset{\eqref{est 102}}{=}&2 \sum_{k,l=1}^{3} \int_{\mathbb{R}^{3}} \partial_{2} [ \partial_{k} b_{1} \partial_{k} [-\partial_{1} b_{1} - \partial_{2} b_{2} ]] \partial_{l}^{2} b_{3} + \partial_{2} [\partial_{k} b_{1}\partial_{k} b_{3} ] \partial_{l}^{2} [-\partial_{1} b_{1} - \partial_{2} b_{2} ] \nonumber \\
& \hspace{9mm} - \partial_{1} [\partial_{k} b_{2} \partial_{k} [-\partial_{1} b_{1} - \partial_{2} b_{2} ]] \partial_{l}^{2} b_{3} - \partial_{1} [\partial_{k} b_{2}\partial_{k} b_{3} ] \partial_{l}^{2} [-\partial_{1} b_{1} - \partial_{2} b_{2} ] dx \nonumber \\
\lesssim& \int_{\mathbb{R}^{3}} \lvert \nabla^{2} b_{h} \rvert^{2} \lvert \nabla^{2} b_{v} \rvert + \lvert \nabla^{2} b_{h} \rvert \lvert \nabla^{3} b_{h} \rvert \lvert \nabla b_{v} \rvert  + \lvert \nabla b_{h} \rvert \lvert \nabla^{3} b_{h} \rvert \lvert \nabla^{2} b_{v} \rvert dx. \label{est 103}
\end{align}
Third, because $\sum_{l\in \{2,6\}} \RomanI_{1,3,l}$ consists of $b_{3}$ twice, similarly to $\sum_{l \in \{3,7 \}} \RomanI_{1,3,l}$, we integrate by parts and use divergence-free property of $\partial_{3} = -\partial_{1} b_{1} - \partial_{2} b_{2}$  to deduce 
\begin{align}
\sum_{l\in \{2,6\}} \RomanI_{1,3,l} \overset{\eqref{est 6}}{=}& 2 \sum_{k,l=1}^{3} \int_{\mathbb{R}^{3}} \partial_{k} b_{3} \partial_{k} \partial_{1} b_{3} \partial_{l}^{2} \partial_{3} b_{2} - \partial_{k} b_{3} \partial_{k}\partial_{2} b_{3} \partial_{l}^{2} \partial_{3} b_{1} dx \nonumber  \\
=& -2 \sum_{k,l=1}^{3} \int_{\mathbb{R}^{3}} \partial_{3} (\partial_{k} b_{3} \partial_{k} \partial_{1} b_{3}) \partial_{l}^{2} b_{2} - \partial_{3} (\partial_{k} b_{3} \partial_{k} \partial_{2} b_{3}) \partial_{l}^{2} b_{1} dx \nonumber \\
=& -2\sum_{k,l=1}^{3} \int_{\mathbb{R}^{3}} (\partial_{k}\partial_{3} b_{3} \partial_{k} \partial_{1} b_{3} + \partial_{k} b_{3}\partial_{k}\partial_{1} \partial_{3} b_{3}) \partial_{l}^{2} b_{2} \nonumber \\
& \hspace{10mm} - (\partial_{k} \partial_{3} b_{3} \partial_{k} \partial_{2} b_{3} + \partial_{k} b_{3} \partial_{k} \partial_{2} \partial_{3} b_{3}) \partial_{l}^{2} b_{1} dx \nonumber \\
=& -2 \sum_{k,l=1}^{3} \int_{\mathbb{R}^{3}} [\partial_{k} ( -\partial_{1} b_{1} - \partial_{2} b_{2}) \partial_{k} \partial_{1} b_{3} + \partial_{k} b_{3} \partial_{k} \partial_{1} (-\partial_{1} b_{1} - \partial_{2} b_{2} ) ] \partial_{l}^{2} b_{2} \nonumber \\
& \hspace{10mm} - [\partial_{k} (-\partial_{1} b_{1} - \partial_{2} b_{2}) \partial_{k} \partial_{2} b_{3} + \partial_{k} b_{3} \partial_{k} \partial_{2} (-\partial_{1} b_{1} - \partial_{2} b_{2} ) ] \partial_{l}^{2} b_{1} dx \nonumber \\
\lesssim& \int_{\mathbb{R}^{3}} ( \lvert \nabla^{2} b_{h} \rvert \lvert \nabla^{2} b_{v} \rvert + \lvert \nabla b_{v} \rvert \lvert \nabla^{3} b_{h} \rvert) \lvert \nabla^{2} b_{h} \rvert dx. \label{est 104}
\end{align}
Fourth, we integrate by parts and deduce 
\begin{align}
\sum_{l \in \{1,3\}} \RomanI_{2,5,l} \overset{\eqref{est 8} }{=}& -2 \sum_{k,l=1}^{3} \int_{\mathbb{R}^{3}} \partial_{k} b_{2} \partial_{k} \partial_{1} b_{2} \partial_{l}^{2} \partial_{2} b_{3} - \partial_{k} b_{2} \partial_{k} \partial_{2} b_{1} \partial_{l}^{2} \partial_{2} b_{3} dx \nonumber \\
=&2 \sum_{k,l=1}^{3} \int_{\mathbb{R}^{3}} \partial_{2} (\partial_{k} b_{2} \partial_{k} \partial_{1} b_{2}) \partial_{l}^{2} b_{3} - \partial_{2} (\partial_{k} b_{2} \partial_{k} \partial_{2} b_{1}) \partial_{l}^{2} b_{3} dx \nonumber \\
\lesssim& \int_{\mathbb{R}^{3}} ( \lvert \nabla^{2} b_{h} \rvert^{2} + \lvert \nabla b_{h} \rvert \lvert \nabla^{3} b_{h} \rvert ) \lvert \nabla^{2} b_{v} \rvert dx.\label{est 105}
\end{align}
Fifth, we immediately bound 
\begin{align}
\sum_{l \in \{4,8 \}} \RomanI_{2,5,l} \overset{\eqref{est 8}}{=}& -2 \sum_{k,l=1}^{3} \int_{\mathbb{R}^{3}} \partial_{k} b_{2} \partial_{k} \partial_{2} b_{1} \partial_{l}^{2} \partial_{3} b_{2} - \partial_{k} b_{2} \partial_{k} \partial_{3} b_{2} \partial_{l}^{2} \partial_{2} b_{1} dx  \nonumber \\
\lesssim& \int_{\mathbb{R}^{3}} \lvert \nabla b_{h} \rvert \lvert \nabla^{2} b_{h} \rvert \lvert \nabla^{3} b_{h} \rvert dx. \label{est 106} 
\end{align}
Sixth, we immediately bound 
\begin{align}
\sum_{l \in \{5,6 \}} \RomanI_{2,5,l} \overset{\eqref{est 8}}{=}& 2 \sum_{k,l=1}^{3} \int_{\mathbb{R}^{3}} \partial_{k} b_{2} \partial_{k} \partial_{2} b_{3} \partial_{l}^{2} \partial_{1} b_{2} - \partial_{k} b_{2} \partial_{k} \partial_{2} b_{3} \partial_{l}^{2} \partial_{2} b_{1} dx \nonumber \\
\lesssim& \int_{\mathbb{R}^{3}} \lvert \nabla b_{h} \rvert \lvert \nabla^{3} b_{h} \rvert \lvert \nabla^{2} b_{v} \rvert dx.\label{est 107} 
\end{align}
Seventh, we integrate by parts and bound 
\begin{align}
\sum_{l \in \{1,3 \}} \RomanI_{4,6,l}  \overset{\eqref{est 10}}{=}& - 2 \sum_{k,l=1}^{3} \int_{\mathbb{R}^{3}} \partial_{k} b_{1} \partial_{k} \partial_{1} b_{2} \partial_{l}^{2} \partial_{1} b_{3} - \partial_{k} b_{1} \partial_{k} \partial_{2} b_{1} \partial_{l}^{2} \partial_{1} b_{3} dx \nonumber \\
=& 2 \sum_{k,l=1}^{3} \int_{\mathbb{R}^{3}} \partial_{1} (\partial_{k} b_{1} \partial_{k} \partial_{1} b_{2}) \partial_{l}^{2} b_{3} - \partial_{1} (\partial_{k} b_{1} \partial_{k} \partial_{2} b_{1}) \partial_{l}^{2} b_{3} dx \nonumber  \\
\lesssim& \int_{\mathbb{R}^{3}} ( \lvert \nabla^{2} b_{h} \rvert^{2} + \lvert \nabla b_{h} \rvert \lvert \nabla^{3} b_{h} \rvert) \lvert \nabla^{2} b_{v} \rvert dx.  \label{est 108} 
\end{align}
Eighth, we immediately bound 
\begin{align}
\sum_{l \in \{2,7 \}} \RomanI_{4,6,l} \overset{\eqref{est 10}}{=}& 2 \sum_{k,l=1}^{3} \int_{\mathbb{R}^{3}} \partial_{k} b_{1} \partial_{k} \partial_{1} b_{2} \partial_{l}^{2} \partial_{3} b_{1} - \partial_{k} b_{1} \partial_{k} \partial_{3} b_{1} \partial_{l}^{2} \partial_{1} b_{2} dx \nonumber \\
\lesssim& \int_{\mathbb{R}^{3}} \lvert \nabla b_{h} \rvert \lvert \nabla^{2} b_{h} \rvert \lvert \nabla^{3} b_{h} \rvert dx. \label{est 109} 
\end{align}
Ninth, we immediately bound 
\begin{align}
\sum_{l \in \{5,6 \}} \RomanI_{4,6,l} \overset{\eqref{est 10}}{=}& 2 \sum_{k,l=1}^{3} \int_{\mathbb{R}^{3}} \partial_{k} b_{1} \partial_{k} \partial_{1} b_{3} \partial_{l}^{2} \partial_{1} b_{2} - \partial_{k} b_{1} \partial_{k} \partial_{1} b_{3} \partial_{l}^{2} \partial_{2} b_{1} dx \nonumber \\
\lesssim& \int_{\mathbb{R}^{3}} \lvert \nabla b_{h} \rvert \lvert \nabla^{2} b_{v} \rvert \lvert \nabla^{3} b_{h} \rvert dx. \label{est 110}
\end{align}
Tenth, because $\sum_{ l \in \{2,6 \}} \RomanII_{1,3,l}$ consists of $b_{3}$ twice, similarly to $\sum_{l \in \{3,7 \}} \RomanI_{1,3,l}$, we integrate by parts to deduce and use divergence-free property of $\partial_{3} = -\partial_{1} b_{1} - \partial_{2} b_{2}$  to deduce 
\begin{align}
\sum_{ l \in \{2,6 \}} \RomanII_{1,3,l}  \overset{\eqref{est 14} }{=}& -\sum_{k,l=1}^{3} \int_{\mathbb{R}^{3}} \partial_{k}^{2} \partial_{l} b_{3} \partial_{1} b_{3} \partial_{l} \partial_{3} b_{2} - \partial_{k}^{2}\partial_{l} b_{3} \partial_{2} b_{3} \partial_{l} \partial_{3} b_{1} dx \nonumber \\
=& \sum_{k,l=1}^{3} \int_{\mathbb{R}^{3}} ( \partial_{k}^{2} \partial_{l} \partial_{3} b_{3} \partial_{1} b_{3} + \partial_{k}^{2} \partial_{l} b_{3} \partial_{1} \partial_{3} b_{3}) \partial_{l} b_{2} \nonumber \\
& \hspace{5mm} - (\partial_{k}^{2} \partial_{l} \partial_{3} b_{3} \partial_{2} b_{3} + \partial_{k}^{2} \partial_{l} b_{3} \partial_{2} \partial_{3} b_{3}) \partial_{l} b_{1} dx \nonumber  \\
=& \sum_{k,l=1}^{3} \int_{\mathbb{R}^{3}} \partial_{k}^{2} \partial_{l} (-\partial_{1} b_{1} - \partial_{2} b_{2}) \partial_{1} b_{3} \partial_{l} b_{2} + \partial_{k}^{2} \partial_{l} b_{3} \partial_{1} (-\partial_{1} b_{1} - \partial_{2} b_{2}) \partial_{l} b_{2} \nonumber \\
& \hspace{5mm} - \partial_{k}^{2} \partial_{l} (-\partial_{1} b_{1} - \partial_{2} b_{2}) \partial_{2} b_{3} \partial_{l} b_{1} - \partial_{k}^{2} \partial_{l} b_{3} \partial_{2} (-\partial_{1} b_{1} - \partial_{2} b_{2}) \partial_{l} b_{1} dx \label{est 111} 
\end{align}
and integrate by parts once more to conclude 
\begin{align}
\sum_{ l \in \{2,6 \}} &\RomanII_{1,3,l}  \overset{\eqref{est 111}}{=} - \sum_{k,l=1}^{3} \int_{\mathbb{R}^{3}} \partial_{k}^{2} (-\partial_{1} b_{1} - \partial_{2} b_{2}) \partial_{l} (\partial_{1} b_{3} \partial_{l} b_{2}) + \partial_{k}^{2} b_{3} \partial_{l} (\partial_{1} [-\partial_{1} b_{1} - \partial_{2} b_{2} ] \partial_{l} b_{2}) \nonumber \\
& \hspace{5mm}  - \partial_{k}^{2} (-\partial_{1} b_{1} - \partial_{2} b_{2} ) \partial_{l} (\partial_{2} b_{3} \partial_{l} b_{1}) - \partial_{k}^{2} b_{3} \partial_{l} (\partial_{2} [-\partial_{1} b_{1} - \partial_{2} b_{2} ] \partial_{l} b_{1}) dx \nonumber \\
& \hspace{2mm} \lesssim \int_{\mathbb{R}^{3}} \lvert \nabla^{3} b_{h} \rvert ( \lvert \nabla^{2} b_{v} \rvert \lvert \nabla b_{h} \rvert + \lvert \nabla b_{v} \rvert \lvert \nabla^{2} b_{h} \rvert ) + \lvert \nabla^{2} b_{v} \rvert ( \lvert \nabla^{3} b_{h} \rvert \lvert \nabla b_{h} \rvert + \lvert \nabla^{2} b_{h} \rvert^{2} ) dx. \label{est 112}
\end{align}
Eleventh, we integrate by parts and bound 
\begin{align}
\sum_{l \in \{4,8 \}} \RomanII_{1,3,l} \overset{\eqref{est 14}}{=}& \sum_{k,l=1}^{3} \int_{\mathbb{R}^{3}} \partial_{k}^{2} \partial_{l} b_{3} \partial_{3} b_{1} \partial_{l} \partial_{3} b_{2} - \partial_{k}^{2} \partial_{l} b_{3} \partial_{3} b_{2} \partial_{l} \partial_{3} b_{1} dx \nonumber \\
=& - \sum_{k,l=1}^{3} \int_{\mathbb{R}^{3}} \partial_{k}^{2} b_{3} \partial_{l} (\partial_{3} b_{1} \partial_{l} \partial_{3} b_{2}) - \partial_{k}^{2} b_{3} \partial_{l} (\partial_{3} b_{2} \partial_{l} \partial_{3} b_{1}) dx \nonumber \\
\lesssim& \int_{\mathbb{R}^{3}} \lvert \nabla^{2} b_{v} \rvert ( \lvert \nabla^{2} b_{h} \rvert^{2} + \lvert \nabla b_{h} \rvert \lvert \nabla^{3} b_{h} \rvert) dx. \label{est 113}
\end{align}
Twelfth, we work on another non-trivial group $\sum_{ l \in \{3,7 \}} \RomanII_{1,3,l}$: 
\begin{equation}\label{est 134} 
\sum_{l \in \{3,7 \}} \RomanII_{1,3,l} \overset{\eqref{est 14}}{=} -\sum_{k,l=1}^{3} \int_{\mathbb{R}^{3}} \partial_{k}^{2} \partial_{l} b_{3} \partial_{3} b_{1} \partial_{l} \partial_{2} b_{3} - \partial_{k}^{2} \partial_{l} b_{3} \partial_{3} b_{2}\partial_{l} \partial_{1} b_{3} dx.
\end{equation} 
The difficulty here is that not only does it consist of $b_{3}$ twice, if we integrate to shift $\partial_{3}$ from $\partial_{3} b_{1}$ and $\partial_{3} b_{2}$ therein, then we end up respectively with $b_{1}$ and $b_{2}$ with no derivatives which will make subsequent estimates too difficult. Instead, we first 
integrate by parts to shift $\partial_{k}$ therein to deduce
\begin{align}
\sum_{l \in \{3,7 \}} \RomanII_{1,3,l} \overset{\eqref{est 134}}{=}& \sum_{k,l=1}^{3} \int_{\mathbb{R}^{3}} \partial_{k} \partial_{l} b_{3} \partial_{k} \partial_{3} b_{1} \partial_{l} \partial_{2} b_{3} + \partial_{k} \partial_{l} b_{3} \partial_{3} b_{1} \partial_{k} \partial_{l} \partial_{2} b_{3} \nonumber \\
& \hspace{5mm} - \partial_{k}\partial_{l} b_{3} \partial_{k} \partial_{3} b_{2} \partial_{l} \partial_{1} b_{3} - \partial_{k} \partial_{l} b_{3} \partial_{3} b_{2} \partial_{k} \partial_{l} \partial_{1} b_{3} dx. \label{est 114} 
\end{align}
Now we integrate by parts to shift $\partial_{3}$ of $\partial_{k} \partial_{3} b_{1}$ in the first and $\partial_{k} \partial_{3} b_{2}$ in the third terms while make a square in the second and fourth terms to write 
\begin{align}
\sum_{l \in \{3,7 \}} \RomanII_{1,3,l}  \overset{\eqref{est 114}}{=}& - \sum_{k,l=1}^{3} \int_{\mathbb{R}^{3}} \partial_{k} b_{1} \partial_{3} (\partial_{k} \partial_{l} b_{3} \partial_{l} \partial_{2} b_{3}) - \frac{1}{2} \partial_{2} (\partial_{k} \partial_{l} b_{3})^{2} \partial_{3} b_{1} \nonumber \\
& \hspace{5mm} - \partial_{k} b_{2}\partial_{3} (\partial_{k} \partial_{l} b_{3}\partial_{l} \partial_{1} b_{3}) + \frac{1}{2} \partial_{1} (\partial_{k} \partial_{l} b_{3})^{2} \partial_{3} b_{2} dx. \label{est 115} 
\end{align}
We integrate by parts twice in the second and fourth terms, and use the divergence-free property so that $\partial_{3} b_{3} = -\partial_{1} b_{1} - \partial_{2} b_{2}$ to conclude 
\begin{align}
\sum_{l \in \{3,7 \}} \RomanII_{1,3,l} \overset{\eqref{est 115}}{=}& - \sum_{k,l=1}^{3} \int_{\mathbb{R}^{3}} \partial_{k} b_{1} ( \partial_{k} \partial_{l} \partial_{3} b_{3} \partial_{l} \partial_{2} b_{3} + \partial_{k} \partial_{l} b_{3} \partial_{l} \partial_{2} \partial_{3} b_{3}) - \frac{1}{2} \partial_{3} (\partial_{k} \partial_{l} b_{3})^{2} \partial_{2} b_{1} \nonumber \\
& \hspace{5mm} - \partial_{k} b_{2} ( \partial_{k} \partial_{l} \partial_{3} b_{3} \partial_{l} \partial_{1} b_{3} + \partial_{k} \partial_{l} b_{3} \partial_{l} \partial_{1} \partial_{3} b_{3}) + \frac{1}{2} \partial_{3} (\partial_{k} \partial_{l} b_{3})^{2} \partial_{1} b_{2} dx \nonumber \\
=& -\sum_{k,l=1}^{3} \int_{\mathbb{R}^{3}} \partial_{k} b_{1} \partial_{k} \partial_{l} (-\partial_{1} b_{1} - \partial_{2} b_{2}) \partial_{l} \partial_{2} b_{3} + \partial_{k} b_{1} \partial_{k} \partial_{l} b_{3} \partial_{l} \partial_{2} (-\partial_{1} b_{1} - \partial_{2} b_{2}) \nonumber \\
& \hspace{5mm} -\partial_{k} \partial_{l} b_{3} \partial_{k} \partial_{l} (-\partial_{1} b_{1} - \partial_{2} b_{2}) \partial_{2} b_{1} - \partial_{k} b_{2} \partial_{k} \partial_{l} ( -\partial_{1} b_{1} -\partial_{2} b_{2}) \partial_{l} \partial_{1} b_{3} \nonumber \\
& \hspace{5mm} -\partial_{k} b_{2}\partial_{k}\partial_{l} b_{3} \partial_{l} \partial_{1} (-\partial_{1} b_{1} - \partial_{2} b_{2}) + \partial_{k} \partial_{l} b_{3} \partial_{k} \partial_{l} (-\partial_{1} b_{1} - \partial_{2} b_{2}) \partial_{1} b_{2} dx \nonumber \\
\lesssim& \int_{\mathbb{R}^{3}} \lvert \nabla b_{h} \rvert \lvert \nabla^{3} b_{h} \rvert \lvert \nabla^{2} b_{v} \rvert dx. \label{est 116}
\end{align}
The rest of the terms can be bounded immediately as follows:
\begin{subequations}\label{est 117} 
\begin{align}
\sum_{l \in \{4,8 \}} \RomanII_{2,5,l} \overset{\eqref{est 18} }{=}& \sum_{k,l=1}^{3} \int_{\mathbb{R}^{3}} \partial_{k}^{2} \partial_{l} b_{2} \partial_{2} b_{1} \partial_{l} \partial_{3} b_{2} - \partial_{k}^{2} \partial_{l} b_{2} \partial_{3} b_{2} \partial_{l} \partial_{2} b_{1} dx \nonumber \\
\lesssim& \int_{\mathbb{R}^{3}} \lvert \nabla^{3} b_{h} \rvert \lvert \nabla b_{h} \rvert \lvert \nabla^{2} b_{h} \rvert dx,  \\
\sum_{l\in \{5,6 \}} \RomanII_{2,5,l} \overset{\eqref{est 18}}{=}& - \sum_{k,l=1}^{3} \int_{\mathbb{R}^{3}} \partial_{k}^{2} \partial_{l} b_{2} \partial_{2} b_{3} \partial_{l} \partial_{1} b_{2} - \partial_{k}^{2} \partial_{l} b_{2} \partial_{2} b_{3} \partial_{l} \partial_{2} b_{1} dx \nonumber \\
\lesssim& \int_{\mathbb{R}^{3}} \lvert \nabla^{3} b_{h} \rvert \lvert \nabla b_{v} \rvert \lvert \nabla^{2} b_{h} \rvert dx, \\
\sum_{l \in \{1,3\}} \RomanII_{2,5,l} \overset{\eqref{est 18} }{=}& \sum_{k,l=1}^{3} \int_{\mathbb{R}^{3}} \partial_{k}^{2} \partial_{l} b_{2} \partial_{1} b_{2} \partial_{l} \partial_{2} b_{3} - \partial_{k}^{2}\partial_{l} b_{2} \partial_{2} b_{1} \partial_{l} \partial_{2} b_{3} dx \nonumber \\
\lesssim& \int_{\mathbb{R}^{3}} \lvert \nabla^{3} b_{h} \rvert \lvert \nabla b_{h} \rvert \lvert \nabla^{2} b_{v} \rvert dx, \\
\sum_{l \in \{2,7 \}} \RomanII_{4,6,l} \overset{ \eqref{est 22} }{=}& - \sum_{k,l=1}^{3} \int_{\mathbb{R}^{3}} \partial_{k}^{2} \partial_{l} b_{1} \partial_{1} b_{2} \partial_{l} \partial_{3} b_{1} - \partial_{k}^{2} \partial_{l} b_{1} \partial_{3} b_{1} \partial_{l} \partial_{1} b_{2} dx \nonumber \\
\lesssim& \int_{\mathbb{R}^{3}} \lvert \nabla^{3} b_{h} \rvert \lvert \nabla b_{h} \rvert \lvert \nabla^{2} b_{h} \rvert dx, \\
\sum_{l \in \{5,6 \}} \RomanII_{4,6,l} \overset{\eqref{est 22}}{=}& - \sum_{k,l=1}^{3} \int_{\mathbb{R}^{3}} \partial_{k}^{2} \partial_{l} b_{1} \partial_{1} b_{3}\partial_{l} \partial_{1} b_{2} - \partial_{k}^{2} \partial_{l} b_{1} \partial_{1} b_{3} \partial_{l} \partial_{2} b_{1} dx \nonumber \\
\lesssim& \int_{\mathbb{R}^{3}} \lvert \nabla^{3} b_{h} \rvert \lvert \nabla b_{v} \rvert \lvert \nabla^{2} b_{h} \rvert dx, \\
\sum_{l\in \{1,3\}} \RomanII_{4,6,l} \overset{\eqref{est 22}}{=}& \sum_{k,l=1}^{3} \int_{\mathbb{R}^{3}} \partial_{k}^{2}\partial_{l} b_{1} \partial_{1} b_{2} \partial_{l} \partial_{1} b_{3} - \partial_{k}^{2}\partial_{l} b_{1} \partial_{2} b_{1} \partial_{l} \partial_{1} b_{3} dx \nonumber \\
\lesssim& \int_{\mathbb{R}^{3}} \lvert \nabla^{3} b_{h} \rvert \lvert \nabla b_{h} \rvert \lvert \nabla^{2} b_{v} \rvert dx. 
\end{align}
\end{subequations} 
We apply \eqref{est 118}, \eqref{est 103}-\eqref{est 110}, \eqref{est 112}, \eqref{est 113}, \eqref{est 116}, and \eqref{est 117} in \eqref{est 133} and conclude \eqref{est 100}. 

\section*{Acknowledgments}
The second author gratefully acknowledges a grant from the Simons Foundation (962572, KY) and thanks Prof. Adam Larios, Prof. Jiahong Wu, and Prof. Theodore Drivas for valuable comments.

\end{document}